\newcommand{\rr}{\mathbb{R}}
\newtheorem{lem}{Lemma}[section]
\newtheorem{thm}{Theorem}[section]
\newtheorem{rmk}{Remark}[section]
\numberwithin{equation}{section}
\numberwithin{equation}{section}
\title[A variational representation and LDP for $G$-Brownian functionals]{A variational representation and large deviations for functionals of $G$-Brownian
motion}
\author{Fuqing Gao}
\address{Fuqing Gao,
School of Mathematics and Statistics, Wuhan University, 430072
Wuhan, China} \email{fqgao@whu.edu.cn}
\thanks{Research supported by the National Natural Science Foundation
of China  (11171262) }
\date{}
\begin{document}

\begin{abstract}
A variational representation for functionals of $G$-Brownian motion
is established by a finite-dimensional approximate
technique. As an application of the variational representation,
we obtain a large deviation principle  for stochastic flows driven
by G-Brownian motion.

\end{abstract}

\subjclass[2000]{60J65,~~60F10,~~60H10}

\keywords{Variational representation,   $G$-Brownian motion,
stochastic flows, large deviations}

\maketitle

%\tableofcontents

\section{Introduction}
Peng (\cite{peng-G-brown-1}) proposed $G$-Brownian motion and $G$-expectation.   The stochastic analysis under the $G$-expectation ($G$-stochastic calculus) has had many important progresses in recent years (cf. \cite{peng-book-10} and references therein).   For a connection between Denis and Martini (\cite{Denis-Martini}) and the $G$-stochastic
integration theory of Peng (\cite{peng-G-brown-1}), we refer to  Denis, Hu and Peng (\cite{Denis-Hu-Peng}), and Soner, Touzi and Zhang (\cite{SonerTouziZhang}).
The $G$-stochastic calculus also provides a framework for financial problems with uncertainty about
the volatility and a stochastic method for fully nonlinear PDEs (cf. \cite{Denis-Martini},\cite{peng-G-brown-1}, \cite{SonerTouziZhang-2bsde}).

The purpose of this paper is to establish a variational representation for functionals  of $G$-Brownian
motion and a large deviation principle for stochastic flows driven by $G$-Brownian
motion. We obtain the following variational representation for functionals  of $G$-Brownian
motion:
\begin{equation}\label{Variational-formula-0}
\mathbb E^G ( e^{\Phi(B)} ) = \exp \left\{
 \sup_{\eta\in (M^2(0,T))^d} \mathbb E^G \left( \Phi\left(B^{\eta}\right)
 -H_T^G(\eta)
\right) \right\},
\end{equation}
where $\Phi\in L^1_G(\Omega_T)$ bounded,  $\{B_t,t\in[0,T]\}$ is a
$G$-Brownian motion and $\{\langle B\rangle_t,t\in[0,T]\}$ is its
quadratic variation process,  $B_t^{\eta}=B_t+\int_0^t\eta_s d\langle B\rangle_s$ and $H_T^G(\eta)= \frac{1}{2} \sum_{i,j=1}^d\int_0^T\eta_s^i\eta_s^j d\langle B\rangle_s^{ij}$. The definitions of $\mathbb E^G$, $L^1_G(\Omega_T)$,
$M^2(0,T)$, the $G$-Brownian motion and the quadratic variation
process will be given in  Section 2.  As an application of the variational
representation, we obtain a large deviation principle  for
stochastic flows driven by G-Brownian motion.

In the classical case, a variational representation of functionals
of finite dimensional Brownian motion was first obtained by Bou\'e
and Dupuis (\cite{BoueDupuis-AP-98}).  Chen and Xiong
(\cite{ChenXiong-10}) considered the variational representations
under a $g$-expectation which is defined by a backward stochastic
differential equation. The variational representations have been
shown to be useful in deriving various asymptotic results in large
deviations (cf.   \cite{BoueDupuis-AP-98},
\cite{BDM-infty-AP-08}, \cite{BDM-Bernoulli-10},
 \cite{DupuisEllis} and  \cite{RenZhang}) and functional inequalities (cf.  \cite{borell-PA-00}).

Under  $G$-expectation, the complicated measurable selection
technique in \cite{BoueDupuis-AP-98} cannot be used  and the Clark-Ocone formula
is not available.  In this paper, we will develop  finite-dimensional approximate
technique under $G$-expectation. We prove that a finite-dimensional functional for $G$-Brownian motion can be approximated by  a sequence of $G$- stochastic differential
equations, which plays an important role in the proof of the upper bound. The lower bound will be proved by the $G$-Girsanov
transformation (cf. \cite{XuShangZhang}) and bounded approximation.  In particular,   this also provides a new proof for the variational representations  of Bou\'e and
Dupuis.

The remainder of the paper is organized as follows. In Section 2 we
recall some basic conceptions and results under  $G$-framework.
The variational representation is proved in Section 3. An abstract large deviation principle for functionals of $G$-Brownian motion is presented in Section 4.  A
large deviation principle of stochastic flows driven by $G$-Brownian
motion is established in Section 5.

\section{$G$-expectation and $G$-Brownian motion}

In this section,   we briefly recall some basic conceptions and
results about $G$-expectation and $G$-Brownian motion (see
\cite{Denis-Hu-Peng},\cite{peng-G-brown-1},  \cite{peng-G-brown-d}
and \cite{peng-book-10} for details).

\subsection{Sublinear
expectation}

Let $\Omega$ be a given set and let $\mathcal{H}$ be a
linear space of real valued functions defined on $\Omega$ with $c\in
\mathcal{H}$ for all constants $c$, and satisfying:
if $X_{i}\in \mathcal{H}$, $i=1,\cdots,d$,
then
$$
\varphi(X_{1},\cdots,X_{d})\in \mathcal{H}\text{,\  \ for all }\varphi \in
lip(\mathbb{R}^{d}),
$$
where $lip(\mathbb{R}^{d})$ is the space of all bounded and Lipschitz
continuous functions on $\mathbb{R}^{d}$.

A sublinear
expectation $\mathbb{\hat{E}}$ on $\mathcal{H}$ is a functional
$\mathbb{\hat{E}}:\mathcal{H}\mapsto \mathbb{R}$ satisfying the following
properties:

{\it Monotonicity:}  If $X\geq Y$ then
$\mathbb{\hat{E}}(X)\geq \mathbb{\hat{E}}(Y)$;

{\it Constant preserving:} $\mathbb{\hat{E}}(c)=c$;

{\it Sub-additivity:}  $\mathbb{\hat{E}}(X)-\mathbb{\hat{E}}(Y)\leq \mathbb{\hat{E}}(X-Y)$;

{\it Positive homogeneity:}  $\mathbb{\hat{E}}(\lambda X)=\lambda \mathbb{\hat{E}
}(X)$,$\  \  \forall \lambda \geq0$.

\noindent
The triple $(\Omega
,\mathcal{H},\mathbb{\hat{E}})$ is called a sublinear expectation space.
$X\in \mathcal{H}$ is called a random variable in $(\Omega,\mathcal{H})$.

A $m$-dimensional random vector $X=(X_{1},\cdots,X_{m})$ is said to
be independent of another $n$-dimensional random vector
$Y=(Y_{1},\cdots,Y_{n})$ if
$$
\mathbb{\hat{E}}(\varphi(X,Y))=\mathbb{\hat{E}}(\mathbb{\hat{E}}%
(\varphi(X,y))_{y=Y}),\  \  \text{for }\varphi \in lip(\mathbb{R}^{m}%
\times \mathbb{R}^{n}).
$$

Let $X_{1}$ and $X_{2}$ be two $d$--dimensional random vectors defined
respectively in {sublinear expectation spaces }$(\Omega_{1},\mathcal{H}%
_{1},\mathbb{\hat{E}}_{1})${ and }$(\Omega_{2},\mathcal{H}_{2},\mathbb{\hat
{E}}_{2})$. They are called identically distributed, denoted by $X_{1}\sim
X_{2}$, if
$$
\mathbb{\hat{E}}_{1}(\varphi(X_{1}))=\mathbb{\hat{E}}_{2}(\varphi
(X_{2})),\  \  \  \forall \varphi \in C_{b.Lip}(\mathbb{R}^{n}).
$$

A $d$-dimensional
random vector $X=(X_{1},\cdots,X_{d})$ in a sublinear expectation space
$(\Omega,\mathcal{H},\mathbb{\hat{E}})$ is called $G$-normal distributed if
for each $a\,$, $b\geq0$ we have
$
aX+b\bar{X}\sim \sqrt{a^{2}+b^{2}}X,
$
where $\bar{X}$ is an independent copy of $X$. The letter $G$ denotes the
function $G:\mathbb{S}_{d}\mapsto \mathbb{R}$:
$
G(A):=\frac{1}{2}\mathbb{\hat{E}}(( AX,X)),
$
where $\mathbb{S}_d$ is the collection of $d\times d$ symmetric
matrices and $(\cdot,\cdot)$ denotes the inner product in $\mathbb
R^d$, i.e., $ ( x,y):=\sum_{i=1}^dx^iy^j$ for any
$x=(x^1,\cdots,x^d)$, $y=(y^1,\cdots,y^d)\in\mathbb R^d$.

Let $d$-dimensional random vector $X=(X_{1},\cdots,X_{d})$ in a
sublinear expectation space $(\Omega,\mathcal{H},\mathbb{\hat{E}})$
be $G$-normal distributed.  For each $\varphi\in lip (\mathbb R^d)$,
set
$
u(t,x)=\mathbb{\hat{E}}\left(\varphi(x+\sqrt{t}X)\right)$, $t\geq 0$,
$x\in\mathbb R^d$.
Then $u(t,x)$ is the unique  viscosity solution of the following
equation,
\begin{equation}
\begin{aligned}
\frac{\partial u}{\partial t} - G\left(D_x^2 u\right) =& 0,~~t\geq
0,~ x\in\mathbb R^d,\quad
u(0,x) = \varphi(x),
\end{aligned}
\end{equation}
where $D_x^2 u=(\partial_{x^{i}x^{j}}^{2}u)_{i,j=1}^{d}$ is the
Hessian matrix of $u$.

The inner product in $\mathbb S_d$ is defined by
$
(A_1,A_2)=\sum_{i,j=1}^d a_1(i,j)a_2(i,j)$ for $A_1=(a_1(i,j))_{d\times
d }$ ,$A_2=(a_2(i,j))_{d\times d }$.
Then the map $G:  \mathbb{S}_d\mapsto \rr$ is a monotonic and
sublinear function, i.e., for $A,\bar{A} \in \mathbb{S}_d$,
\begin{equation}\label{mono}
\left \{
\begin{array}{l}
G(A+\bar{A})   \leq G(A)+G(\bar{A}), \\
G(\lambda A)   =\lambda G(A),\    \mbox{  for all  } \lambda \geq0,\\
G(A)  \geq G(\bar{A}),\   \mbox{  if  } A\geq \bar{A}.
\end{array}
\right.
\end{equation}

For a monotonic and sublinear function $G:  \mathbb{S}_d\mapsto \rr$
given, there exists a bounded, convex and closed subset
$\Sigma\subset \mathbb S_d^+$ (the non-negative elements of $\mathbb
S_d$) such that
$
G(A) = \frac{1}{ 2}\sup_{\sigma\in \Sigma}(A,\sigma).
$
Throughout this paper, we assume that there exist constants
$0<\underline{\sigma}\leq \overline{\sigma}<\infty$ such that
\begin{equation} \label{basic-condition-2}
\Sigma\subset \left\{\sigma\in\mathbb S_d;~\underline{\sigma}
I_{d\times d}\leq\sigma\leq \overline{\sigma} I_{d\times d}\right\}.
\end{equation}

\subsection{$G$-Brownian motion and $G$-expectation}

Let $\Omega$ denote the space of all
$\mathbb{R}^{d}$-valued continuous paths $\omega: ~(0,+\infty)\ni
 t\longmapsto\omega_{t}\in\mathbb R^d$, with $\omega_{0}=0$.
Let $\mathcal{B}(\Omega)$, $\mathcal{M}$, $L^{0}(\Omega)$, $B_{b}(\Omega)$ and $C_{b}(\Omega)$ denote respectively the Borel $\sigma$-algebra of $\Omega$,
the collection of all probability measure on $\Omega$,
the space of all $\mathcal{B}(\Omega)$-measurable real functions,
all bounded elements in $L^{0}(\Omega)$
and all continuous elements in $B_{b}(\Omega)$.
For each $t\in \lbrack0,\infty)$, we also denote
$\Omega_{t}:=\{ \omega_{\cdot \wedge t}:\omega \in \Omega \}$; $\mathcal{F}_{t}:=\mathcal{B}(\Omega_{t})$;
$L^{0}(\Omega_{t})$: the space of all $\mathcal{B}(\Omega_{t}
)$-measurable real functions; $B_{b}(\Omega_{t}):=B_{b}(\Omega)\cap L^{0}(\Omega_{t})$; $C_{b}(\Omega_{t}):=C_{b}(\Omega)\cap L^{0}(\Omega_{t})$.

For each $t>0$,  set
$$
L_{ip}(\Omega_t):=\left\{\varphi\left(\omega_{t_1},\omega_{t_2},\cdots
\omega_{t_n}\right):
 n\geq1, t_1,\cdots ,t_n\in[0,t], \varphi\in
lip((\mathbb{R}^d)^n))\right\}.
$$
Define $
L_{ip}(\Omega):=\bigcup_{n=1}^{\infty}L_{ip}(\Omega_n)\subset C_{b}(\Omega)$.

Let
$G:\mathbb{S}_{d}\mapsto \mathbb{R}$ be a given monotonic and sublinear
function. A continuous process $\{B_{t}(\omega)\}_{t\geq0}$ in a sublinear expectation
space $(\Omega,\mathcal{H},\mathbb{E}^G)$ is called a $G$-Brownian motion
if it has stationary and independent increments,      $B_0=0$,   $B_{1}$ is $G$-normal distributed and $\mathbb{E}^G(B_{t})=-\mathbb{E}^G(-B_{t})=0$ for $t\geq0$.

The
topological completion of $L_{ip}(\Omega_t)$ (resp.
$L_{ip}(\Omega)$) under the Banach norm $\|\cdot\|_{p,G}:=(\mathbb{E}^G(|\cdot|^p))^{1/p}$ is
denoted by $L_{G}^{p}(\Omega_t)$ (resp. $L_{G}^{p}(\Omega)$), where $p\geq 1$. $\mathbb{E}^G(\cdot)$
can be extended uniquely to a sublinear
expectation on $L_{G}^{1}(\Omega)$. We denote also by $\mathbb{E}^G$ the
extension. It is proved in \cite{Denis-Hu-Peng} that $L^{0}(\Omega)\supset
L_{G}^{p}(\Omega)\supset C_{b}(\Omega)$, and there exists a weakly
compact family $\mathcal{P}$ of probability measures defined on $(\Omega
,\mathcal{B}(\Omega))$ such that
$
\mathbb{E}^G(X)=\sup_{P\in \mathcal{P}}E_{P}(X)$ for $ X\in
C_{b}(\Omega).$
$\mathbb E^G(\cdot)$ has the following regularity (\cite{Denis-Hu-Peng}):
For each $\{X_{n} \}_{n=1}^{\infty}$ in $C_{b}(\Omega)$ with
$X_{n}\downarrow0$ \ on $\Omega$,   $\mathbb E^G (X_{n}%
)\downarrow0$. We also denote
$$
\overline{\mathbb{E}}^G(X)=\sup_{P\in \mathcal{P}}E_{P}(X),~~~X\in
L^0(\Omega).
$$

The natural Choquet capacity  associated with $\mathbb E^G$ is defined by
$$
c^G(A):=\sup_{P\in \mathcal{P}}P(A),\  \ A\in \mathcal{B}(\Omega).
$$
A set $A\subset \Omega$ is polar if $c^G(A)=0$. A property holds
\textquotedblleft quasi-surely\textquotedblright \ (q.s.) if it
holds outside a polar set. A mapping $X$ on $\Omega$ with values in
a topological space is said to be quasi-continuous (q.c.) if for any
$\varepsilon>0 $, there exists an open set $ O$ with
$c^G(O)<\varepsilon$  such that $X|_{O^{c}}$ is continuous.  $ L
_{G}^{p}(\Omega)$ also has the following characterization
(\cite{Denis-Hu-Peng}):
$$
\begin{aligned}
L_{G}^{p}(\Omega)=\big\{X\in
{L}^{0}(\Omega);~ &\lim_{n\to\infty}\overline{\mathbb{E}}^G(|X|^{p}I_{\{|X|\geq n\}})=0, \\
&\text{ and
}X\text{ is }c^G\text{-quasi surely continuous}\big\}.
\end{aligned}
$$

Let us recall the representation theorem of $G$-expectation. For a
monotonic and sublinear function $G:  \mathbb{S}_d\mapsto \rr$
given, there exists a bounded, convex and closed subset
$\Sigma\subset \mathbb S_d^+$  such that
$
G(A) = \frac{1}{ 2}\sup_{\sigma\in \Sigma}(A,\sigma).
$
Set  $\Gamma:= \{\gamma=\sigma^{1/2}, \sigma\in\Sigma\}$. Let $P$ be
the Wiener measure on $\Omega$. Let
$\mathcal{A}_{0,\infty}^{\Gamma}$ be the collection of all
$\Gamma$-valued $\{\mathcal{F}_t,t\geq 0\} $-adapted processes on
the interval $[0,+\infty)$, i.e., $\{\theta_t,t\geq 0\}\in
\mathcal{A}_{0,\infty}^{\Gamma}$ if and only if $\theta_t$ is
$\mathcal F_t$ measurable and $\theta_t\in \Gamma$ for each $t\geq
0$, and let $P_{\theta}$ be the law of the process
$\{\int_{0}^{t}\theta_sd\omega_s, t\geq0\}$ under the Wiener measure
$P$. The representation theorem of
$G$-expectation(\cite{Denis-Hu-Peng}) is stated as follows: for
all~$X\in L_G^1(\Omega)$
\begin{equation}\label{Girsanov-formula-proof-eq-1}
\mathbb E^G(X)=\sup_{\theta\in\mathcal{ A
}_{0,\infty}^{\Gamma}}E_{P_\theta}(X).
\end{equation}

\subsection{$G$-Stochastic integral and quadratic variation process}

Given $T>0$. For $p \in [1,\infty)$,  let $M_G^{p,0}(0,T)$ denote the space of  $\mathbb R$-valued
piecewise constant processes
$$
\eta_t= \sum_{i= 0}^{n-1} \eta_{t_i}1_{[t_i, t_{i+1})}(t)
$$
where $\eta_{t_i}\in   L^p_G(\Omega_{t_i})$, $0=t_0<t_1<\cdots<t_{n}=T$. For $\eta\in M_G^{p,0}(0,T)$, $j=1,\cdots,d$,
the $G$-stochastic integral is defined by
$$
I^j(\eta):=\int_0^T \eta_s d B_s^j := \sum_{i= 0}^{n-1}
\eta_{t_i}(B^j_{t_{i+1}}-B^j_{t_i}).
$$
Let $M_G^p(0,T)$ be the closure of $M_G^{p,0}(0,T)$ under the norm:
$
\|H\|_{M_G^p(0,T)}^p :=  \mathbb E^G\left(\int_0^T|\eta_t|^pdt\right) .
$
Then the mapping $I^j: M^{2,0}_G(0,T)\rightarrow L^2_G(\Omega_T)$ is
continuous, and so it can be continuously extended to $M^2_G(0,T)$.

For any $\eta=(\eta^1,\cdots,\eta^d)\in(M^{2}_G(0,T))^d$, define
$$
\int_0^T \eta_s dB_s=\sum_{i=1}^d\int_0^T \eta_s^i  d B_s^i .
$$

The quadratic variation process of
$G$-Brownian motion is defined by
$$
\langle B\rangle_t := (\langle B\rangle_t ^{ij})_{1\leq i,j\leq d}=\left(  B_t^iB_t^j -2\int_0^t
B_s^i dB_s^j\right)_{1\leq i,j\leq d},\ \ \
\ \ 0  \le t \le T.
$$
$\langle B\rangle_t$ is a
$\mathbb S_d$-valued process with stationary and independent increments.

For any $1\leq i,j\leq d$, define a mapping $M_{G} ^{1,0}(0,T)\mapsto L_G^{T}(\Omega_{1})$
as follows:
$$
Q^{ij}_{0,T}(\eta)=\int_{0}^{T}\eta_sd\left \langle B \right
\rangle^{ij} _{s}:=\sum_{k=0}^{n-1}\eta_{t_k}(\left \langle
B \right \rangle^{ij} _{t_{k+1}}-\left \langle
B \right \rangle ^{ij}_{t_{k}}).
$$
Then $Q^{ij}_{0,T}$ can be uniquely extended to $M_{G}^{1}(0,T)$. We
still denote this mapping by
$$
\int_{0}^{T}\eta_sd\left \langle B\right \rangle^{ij}
_{s}=Q^{ij}_{0,T} (\eta),\  \  \eta\in M_{G}^{1}(0,T).
$$

For $\eta=(\eta^1,\cdots,\eta^d)\in(M^{1}_G(0,T))^{d}$, define
$$
\int_0^T \eta_s d\left \langle B\right
\rangle_s=\left(\sum_{j=1}^d\int_0^T \eta_s^{j }d\left \langle B\right
\rangle^{ij}_s\right)_{d\times 1}.
$$
and for $\eta=(\eta^{ij})_{d\times d}\in(M^{1}_G(0,T))^{d\times d}$, define
$$
\int_0^T \eta_s d\left \langle B\right \rangle_s=\sum_{i,j=1}^d\int_0^T
\eta_s^{ij}d\left \langle B\right \rangle^{ij}_s.
$$

\subsection{$G$-Girsanov formula}

In one dimensional case, under a strong Novikov-type condition, Xu, Shang and Zhang
(\cite{XuShangZhang}) obtained a Girsanov formula under $G$-expectation
based on the martingale characterization theorem (\cite{XuZhang}) of
$G$-Brownian motion. A multi-dimensional version of the $G$-Girsanov formula is presented in \cite{Osuka}.

For $\eta=(\eta^1,\cdots,\eta^d)\in (M_G^2(0,T))^d$ satisfying the following strong Novikov condition:
\begin{equation}\label{novikov-c}
\mathbb E^G\left(\exp\left\{\frac{1}{2}(1+\epsilon))\sum_{i,j=1}^d\int_0^T \eta_s^i\eta_s^jd\langle B\rangle_s^{ij}\right\}\right)<\infty, \quad \mbox{ for some } \epsilon>0,
\end{equation}
we define
\begin{equation}\label{Girsanov-formula-proof-eq-2}
\mathcal E_t^\eta=\exp\left\{\int_0^t \eta_s dB_s
-\frac{1}{2}\sum_{i,j=1}^d\int_0^t\eta_s^i\eta_s^jd\langle B\rangle_s^{ij}\right\}, ~~0\leq t\leq T.
\end{equation}
Then $\mathcal E_t^\eta$ is quasi-continuous. By the  condition (\ref{novikov-c}), $\mathcal E_t^\eta$, $t\in[0,T]$
is a martingale under each $P_\theta$, and  $\mathcal E_t^\eta\in
L_G^1(\Omega_t)$, $t\in[0,T]$ (cf. \cite{XuShangZhang}). Set
\begin{equation}\label{Girsanov-formula-proof-eq-3}
dP_{\theta,\eta}= \mathcal E_T^\eta dP_\theta,
\end{equation}
and
\begin{equation}\label{Girsanov-formula-proof-eq-4}
 \mathbb E^{G,\eta}(X)=\sup_{\theta\in\mathcal{ A
}_{0,\infty}^{\Gamma}}P_{\theta,\eta}(X),~~X\in L_{ip}(\Omega_T).
\end{equation}
Let $L_{G,\eta}^1(\Omega_t)$ be the completion of $(L_{ip}(\Omega_t),
\mathbb E^{G,\eta}(|\cdot|))$. Then $ \mathbb E^{G,\eta}$ can be extended
to $L_{G,\eta}^1(\Omega_T)$, and the following  G-Girsanov formula holds:

\subsubsection*{G-Girsanov formula}(\cite{XuShangZhang}, \cite{Osuka})  Under the condition (\ref{novikov-c}),
$$
B_t^{-\eta}:=B_t-\int_0^t \eta_s d\langle B  \rangle_s,~~t\in [0,T]
$$
is a $G$- Brownian motion under $\mathbb E^{G,\eta}$.

\section{A variational representation for functionals of $G$-Brownian motion}

The main result in this section is the following variation representation for
functionals of $G$-Brownian motion.

\begin{thm}\label{G-variation-representation-thm} Let $\Phi\in L^1_G(\Omega_T)$ be bounded. Then
\begin{equation}\label{G-variation-representation-thm-eq-1}
\begin{aligned}
\mathbb E^G ( e^{\Phi(B)} ) =& \exp \left\{
 \sup_{\eta\in (M_G^2(0,T) )^d} \mathbb E^G \left( \Phi\left(B^\eta\right)
 -H_T^G(\eta)\right)  \right\}\\
 =& \exp \left\{
 \sup_{\eta\in (M_{G}^{2,0}(0,T) )^d\cap \mathcal B_b(\Omega_T)} \mathbb E^G \left( \Phi\left(B^\eta\right)
 -H_T^G(\eta)\right) \right\},
\end{aligned}
\end{equation}
where $
B_t^{\eta}:=B_t+\int_0^t \eta_s d\langle B  \rangle_s$,
\begin{equation}\label{G-variation-representation-thm-eq-2}
H_t^G(\eta)= \frac{1}{2} \sum_{i,j=1}^d\int_0^t\eta_s^i\eta_s^j d\langle B\rangle_s^{ij},~~t\in [0,T].
\end{equation}

\end{thm}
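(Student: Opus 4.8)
The strategy is to prove the two inequalities that together give the identity: the lower bound $\log\mathbb E^G(e^{\Phi})\ge J(\eta)$ for every admissible control $\eta$, where $J(\eta):=\mathbb E^G(\Phi(B^\eta)-H_T^G(\eta))$, and the upper bound $\log\mathbb E^G(e^{\Phi})\le\sup J$ over the bounded piecewise constant controls; since $(M_{G}^{2,0}(0,T))^d\cap\mathcal B_b(\Omega_T)\subset (M_G^2(0,T))^d$, the two displayed suprema then coincide. I would systematically treat first $\eta$ bounded and piecewise constant, so that $H_T^G(\eta)$ is bounded and the strong Novikov condition \eqref{novikov-c} holds, and pass to general $\eta$ afterwards.

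For the lower bound I would fix such an $\eta$ and a scenario $\theta\in\mathcal A_{0,\infty}^\Gamma$ and use the change of measure $dP_{\theta,-\eta}=\mathcal E_T^{-\eta}dP_\theta$: writing $E_{P_\theta}(e^{\Phi(B)})=E_{P_{\theta,-\eta}}(e^{\Phi(B)}/\mathcal E_T^{-\eta})$ and applying Jensen's inequality for the probability $P_{\theta,-\eta}$ gives $\log E_{P_\theta}(e^{\Phi(B)})\ge E_{P_{\theta,-\eta}}(\Phi(B)-\log\mathcal E_T^{-\eta})$. Since $-\log\mathcal E_T^{-\eta}=\int_0^T\eta\,dB+H_T^G(\eta)$ and, by the $G$--Girsanov formula applied to $-\eta$, $B^\eta$ is a $G$--Brownian motion under $\mathbb E^{G,-\eta}$ (so in particular a $P_{\theta,-\eta}$--martingale with $\langle B^\eta\rangle=\langle B\rangle$), the chain rule $\int_0^T\eta\,dB=\int_0^T\eta\,dB^\eta-2H_T^G(\eta)$ lets me rewrite $-\log\mathcal E_T^{-\eta}=\int_0^T\eta\,dB^\eta-H_T^G(\eta)$, whose $P_{\theta,-\eta}$--expectation is $-E_{P_{\theta,-\eta}}(H_T^G(\eta))$; hence $\log E_{P_\theta}(e^{\Phi(B)})\ge E_{P_{\theta,-\eta}}(\Phi(B)-H_T^G(\eta))$. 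A careful bookkeeping (the $G$--Girsanov formula being stated at the level of the sublinear expectation $\mathbb E^{G,-\eta}$, so that one really reduces, scenario by scenario, to the classical Bou\'e--Dupuis estimate after writing $B=\int_0^\cdot\theta_s\,d\omega_s$) then yields, upon taking $\sup_\theta$ and using $\log\sup_\theta=\sup_\theta\log$, the inequality $\log\mathbb E^G(e^{\Phi})\ge J(\eta)$. The extension to $\eta\in (M_G^2(0,T))^d$ follows by $M_G^2$--approximation by bounded piecewise constant controls together with the routine continuity $B^{\eta_n}\to B^\eta$, $H_T^G(\eta_n)\to H_T^G(\eta)$ and $\Phi(B^{\eta_n})\to\Phi(B^\eta)$ in $L_G^1$, which uses the boundedness and quasi--continuity of $\Phi$.

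For the upper bound I would reduce to a finite--dimensional functional: density of $L_{ip}(\Omega_T)$ in $L_G^1(\Omega_T)$ and truncation reduce the claim to $\Phi=\varphi(B_{t_1},\dots,B_{t_N})$ with $\varphi$ bounded Lipschitz, the general case following by the continuity estimates above. For such $\Phi$, and taking $N=1$ for concreteness (the general cylinder case is obtained by iterating the argument over the times $t_k$ via dynamic programming for conditional $G$--expectations), let $w(t,x):=\log\mathbb E^G(e^{\varphi(x+B_{T-t})})$. Log--transforming the $G$--heat equation and using the positive homogeneity of $G$, $w$ solves the Hamilton--Jacobi--Bellman equation $\partial_t w+G\big(D_x^2w+\nabla w\otimes\nabla w\big)=0$, $w(T,\cdot)=\varphi$, $w(0,0)=\log\mathbb E^G(e^{\varphi(B_T)})$, where completing the square gives, for $p\in\mathbb R^d$, $G(A+pp^\top)=\sup_{\sigma\in\Sigma,\,q\in\mathbb R^d}\{\tfrac12(A,\sigma)+(\sigma q)^\top p-\tfrac12 q^\top\sigma q\}$ --- precisely the HJB equation of the control problem with value $\sup_\eta\mathbb E^G(\varphi(B^\eta_T)-H_T^G(\eta))$, with $q$ the drift control and $\tfrac12 q^\top\sigma q$ the running cost encoded by $H_T^G$. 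I would then introduce the feedback control $\eta_t=\nabla w(t,B_t^\eta)$, defined through the $G$--SDE $B_t^\eta=B_t+\int_0^t\nabla w(s,B_s^\eta)\,d\langle B\rangle_s$, apply the $G$--It\^o formula to $w(t,B_t^\eta)$, and use the HJB equation to obtain, quasi--surely and for every admissible $\eta$, $\varphi(B_T^\eta)-H_T^G(\eta)=w(0,0)+\int_0^T\nabla w(s,B_s^\eta)\,dB_s+R_T-S_T$, where $R_T=\tfrac12\int_0^T\big((M_s,\gamma_s)-\sup_{\sigma\in\Sigma}(M_s,\sigma)\big)ds\le0$, $S_T=\tfrac12\int_0^T(\nabla w-\eta)^\top\,d\langle B\rangle_s\,(\nabla w-\eta)\ge0$, $M_s:=D_x^2w+\nabla w\otimes\nabla w$ along $B^\eta$, $\gamma_s:=d\langle B\rangle_s/ds$, and $S_T=0$ for the feedback choice. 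Because $\mathbb E^G(\int_0^T\nabla w\,dB)=0$, this already gives $J(\eta)\le w(0,0)$ for every $\eta$ (re--deriving the lower bound for cylinder $\Phi$), while for the feedback control $J(\eta)=w(0,0)+\sup_\theta E_{P_\theta}(R_T)$; so the upper bound comes down to showing $\sup_\theta E_{P_\theta}(R_T)=0$, i.e. to exhibiting a volatility scenario under which $\langle B\rangle$ tracks the maximiser of $\sigma\mapsto(M_s,\sigma)$ along the controlled path. When $G\in C^1$ this scenario is the fixed point $\theta_s^2=2\nabla G(M_s)$, which closes into a $G$--SDE with Lipschitz coefficients; a time--discretisation of $\nabla w$ finally replaces the feedback control by one in $(M_{G}^{2,0}(0,T))^d\cap\mathcal B_b(\Omega_T)$ with arbitrarily small loss.

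The main obstacle is precisely this last step. Since $G$ need not be smooth, $w$ is only a viscosity solution, $\nabla w$ need not be Lipschitz, and the maximiser of $\sigma\mapsto(M_s,\sigma)$ need not be uniquely selected, so the optimal $G$--SDE and the matching worst--case scenario cannot be written down directly; they must be produced through a sequence of smooth approximations --- mollifying $G$ (hence $\Sigma$, preserving \eqref{basic-condition-2}) and $\varphi$, solving the resulting $G$--SDEs with Lipschitz coefficients, and letting the approximation parameter vanish while controlling $\mathbb E^G$, the quadratic variation $\langle B\rangle$ and the running cost $H_T^G$. This is exactly the finite--dimensional $G$--SDE approximation announced in the introduction; the case $\Sigma=\{I_{d\times d}\}$ recovers the Bou\'e--Dupuis representation.
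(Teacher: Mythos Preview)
Your overall architecture matches the paper's: Girsanov plus Jensen for the lower bound, and the log-transform of the $G$-heat equation with the feedback control $\tilde\eta_t=\nabla_x V(t,B_t^{\tilde\eta})$ for the upper bound. Two points need correction.

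\emph{Lower bound.} The scenario-by-scenario argument is more convoluted than necessary, and the passage from $E_{P_{\theta,-\eta}}$ back to $\mathbb E^G$ is not innocuous (the family $\{P_{\theta,-\eta}\}_\theta$ is not $\{P_\theta\}_\theta$). The paper stays at the sublinear level: for bounded $\eta$ the $G$-Girsanov formula says $B^\eta$ is a $G$-Brownian motion under $\mathbb E^{G,-\eta}$, so
\[
\mathbb E^G\bigl(e^{\Phi(B)}\bigr)=\mathbb E^{G,-\eta}\bigl(e^{\Phi(B^\eta)}\bigr)=\mathbb E^G\bigl(e^{\Phi(B^\eta)-\int_0^T\eta\,dB-H_T^G(\eta)}\bigr),
\]
and Jensen's inequality for sublinear expectations gives the bound in one line. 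The extension to general $\eta\in(M_G^2)^d$ does require a uniform-in-$m$ estimate of $\mathbb E^G\bigl|\Phi(B^{\eta^{(m)}})-\Phi_n(B^{\eta^{(m)}})\bigr|$ (the paper's display \eqref{lower-b-eq-1}); \emph{that} is where one unpacks into $P_\theta$'s and uses Girsanov a second time.

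\emph{Upper bound: the obstacle you flag is not there.} You need $\sup_\theta E_{P_\theta}(R_T)=\mathbb E^G(R_T)=0$, where in your notation $-R_T=\int_0^T G(M_s)\,ds-\tfrac12\int_0^T M_s\,d\langle B\rangle_s$ with $M_s=D_x^2 w+\nabla w\otimes\nabla w$ along $B^{\tilde\eta}$. This requires neither the construction of an optimal volatility scenario nor any mollification of $G$: it is a built-in identity of the $G$-calculus. For any symmetric $\zeta\in(M_G^2)^{d\times d}$ the process $K_t:=\int_0^t G(\zeta_s)\,ds-\tfrac12\int_0^t\zeta_s\,d\langle B\rangle_s$ satisfies $K_t\ge 0$ q.s.\ (the paper's Lemma~\ref{V-formula-lem-1}) \emph{and} $\mathbb E^G(-K_t)=0$ (Proposition~1.4 in \cite{peng-book-10}). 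Since $\int_0^T\nabla w\,dB$ is a symmetric $G$-martingale increment, sub- and super-additivity give $\mathbb E^G\bigl(-K_T+\int_0^T\nabla w\,dB\bigr)=0$, hence $J(\tilde\eta)=w(0,0)$ exactly. Your proposed mollification of $G$ would instead perturb the sublinear expectation $\mathbb E^G$ itself and create a genuine stability problem that the argument does not need.

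\emph{Regularity.} Under the standing uniform ellipticity \eqref{basic-condition-2}, Krylov's interior estimate (Theorem~6.4.3 in \cite{krylov}) upgrades $v=e^w$ to $C^{1+\alpha/2,2+\alpha}$ on each $[t_{l-1},t_l-\kappa]\times\mathbb R^d$, with $\nabla_x w$ globally bounded in terms of $\|\varphi\|$ and $\|\varphi\|_{lip}$. This suffices to solve the feedback $G$-SDE with locally Lipschitz drift on $[t_{l-1},t_l-\kappa]$ and to apply the $G$-It\^o formula; the terminal times $t_l$ are then reached by the uniform $\tfrac12$-H\"older continuity of $V_l$ in $t$. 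So $w$ is a classical solution where it matters, and the viscosity issue does not arise.
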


\begin{rmk}(1). The following proof of Theorem \ref{G-variation-representation-thm} is not depend on the representation for ordinary Brownian motion, the  variational formula of the relative entropy, the measurable selection technique
and the Clark-Ocone formula.   In particular, this also gives a  new proof of the representation for ordinary Brownian motion in \cite{BoueDupuis-AP-98}.

(2). Notice that the $G$-expectation is the supremum of
a collection of expectations so that the canonical map in Wiener space is a  martingale under the expectations. If the representation for ordinary Brownian motion can be extended to continuous martingales, then   Theorem \ref{G-variation-representation-thm} can be obtained from this extension.  But the extension is not available.

%I don't know whether Theorem \ref{G-variation-representation-thm} can be obtained from (\ref{Girsanov-formula-proof-eq-1}) and %the representation for ordinary Brownian motion by viewing $\theta$ as a control.

\end{rmk}

\begin{lem}\label{V-formula-lem-1} (cf. \cite{peng-book-10}, \cite{SonerTouziZhang})
Let $\eta_s^{ij}\in M_{G}^2(0,T)$, $\eta_s^{ij}=\eta_s^{ji}$, $i,j=1,\cdots, d$  be given   and set
$$
M_t=2\int_0^t G(\eta_s)ds-\int_0^t \eta_s d\langle B\rangle_s,~~t\in[0,T].
$$
Then $M_t\geq 0$, q.s.  for all $t\in[0,T]$. In particular, $t\to M_t$ is increasing.
\end{lem}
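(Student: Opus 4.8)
The plan is to exploit the representation \eqref{Girsanov-formula-proof-eq-1} of the $G$-expectation together with the elementary inequality $(A,\sigma)\le 2G(A)$ valid for every $\sigma\in\Sigma$ and every $A\in\mathbb S_d$, which follows directly from $G(A)=\tfrac12\sup_{\sigma\in\Sigma}(A,\sigma)$. Indeed, under each $P_\theta$ the canonical process $B$ has the same law as $\int_0^\cdot\theta_sd\omega_s$ with $\theta_s\in\Gamma$, so the quadratic variation $\langle B\rangle_t$ has the concrete form $\int_0^t\theta_s\theta_s^{\!\top}ds=\int_0^t\sigma_sds$ with $\sigma_s=\theta_s\theta_s^{\!\top}\in\Sigma$, $P_\theta$-a.s. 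Hence, $P_\theta$-a.s., for every $t$,
\begin{equation}\label{lem-proof-eq-1}
\int_0^t\eta_s\,d\langle B\rangle_s=\int_0^t(\eta_s,\sigma_s)\,ds\le\int_0^t 2G(\eta_s)\,ds,
\end{equation}
so that $M_t=2\int_0^tG(\eta_s)ds-\int_0^t\eta_s\,d\langle B\rangle_s\ge 0$ holds $P_\theta$-a.s. Since this is true for every $\theta\in\mathcal A_{0,\infty}^{\Gamma}$, and since $\mathcal P=\{P_\theta\}$ (more precisely, $\mathbb E^G(X)=\sup_\theta E_{P_\theta}(X)$) captures the capacity $c^G$, the event $\{M_t<0\}$ is polar, i.e. $M_t\ge 0$ q.s.

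The remaining point is to upgrade the "for each fixed $t$" statement to "$M_t\ge 0$ q.s.\ for all $t\in[0,T]$ simultaneously," and to deduce monotonicity. For the simultaneity, I would first establish \eqref{lem-proof-eq-1} for piecewise-constant $\eta\in(M_G^{2,0}(0,T))^{d\times d}$, where the integrals against $ds$ and against $\langle B\rangle$ are literal finite sums and the bound is pointwise; then pass to general $\eta\in M_G^2(0,T)$ by approximation, using continuity of both $t\mapsto\int_0^t\eta_s\,d\langle B\rangle_s$ and $t\mapsto\int_0^t2G(\eta_s)ds$ (the latter by $|G(\eta_s)|\le d\,\overline\sigma|\eta_s|$ from \eqref{basic-condition-2}) together with the $L^2_G$-convergence $\int_0^T|\eta_s^{(n)}-\eta_s|^2ds\to 0$; a continuous process that is $\ge 0$ q.s.\ at each rational $t$ is $\ge 0$ q.s.\ at all $t$. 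For monotonicity: on a piecewise-constant grid, $M_{t_{k+1}}-M_{t_k}=2G(\eta_{t_k})(t_{k+1}-t_k)-\eta_{t_k}(\langle B\rangle_{t_{k+1}}-\langle B\rangle_{t_k})$, and since $\langle B\rangle_{t_{k+1}}-\langle B\rangle_{t_k}\in\Sigma(t_{k+1}-t_k)$ q.s.\ (again via the $P_\theta$ representation, where increments of $\langle B\rangle$ are integrals of $\sigma_s\in\Sigma$), each increment is $\ge 0$ q.s.; passing to the limit preserves monotonicity of the continuous limit.

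The main obstacle is the step that makes $\langle B\rangle_{t+h}-\langle B\rangle_t$ belong (q.s.) to the cone generated by $\Sigma$, i.e.\ the q.s.\ comparison $\underline\sigma h\,I\le\langle B\rangle_{t+h}-\langle B\rangle_t\le\overline\sigma h\,I$ and more precisely membership in $h\,\Sigma$ up to the closed convex cone it spans. Under a single $P_\theta$ this is immediate from $\langle B\rangle_{t+h}-\langle B\rangle_t=\int_t^{t+h}\sigma_sds$ with $\sigma_s\in\Sigma$ and $\Sigma$ convex; the care needed is purely in transferring this a.s.-under-each-$P_\theta$ statement to a q.s.\ statement, which is exactly what the identification of $\mathcal P$ with $\{P_\theta:\theta\in\mathcal A_{0,\infty}^{\Gamma}\}$ in \eqref{Girsanov-formula-proof-eq-1} (and the weak compactness of $\mathcal P$) provides. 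Everything else is bookkeeping with the approximation in $M_G^2(0,T)$ and the definition of $Q^{ij}_{0,T}$.
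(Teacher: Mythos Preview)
Your proposal is correct and takes essentially the same approach as the paper: both rest on the fact that, q.s.\ (equivalently, $P_\theta$-a.s.\ for every $\theta$), the normalized increments $(\langle B\rangle_{t_k}-\langle B\rangle_{t_{k-1}})/(t_k-t_{k-1})$ lie in $\Sigma$, so that each term $(\eta_{t_{k-1}},\cdot)$ is dominated by $2G(\eta_{t_{k-1}})$. The paper organizes the argument by first reducing to step processes $\eta\in (M_G^{2,0}(0,T))^{d\times d}$ via $L^1_G$-approximation and then invoking the q.s.\ membership in $\Sigma$ directly from \cite{peng-book-10}, whereas you derive that membership through the $P_\theta$ representation \eqref{Girsanov-formula-proof-eq-1}; the mathematical content is the same.
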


\begin{proof}  Take a sequence  $\eta^{(N)}\in (M_G^{2,0}(0,T))^{d\times d}$ , where
$$
\eta_s^{(N)}=\sum_{k=1}^N\eta^{(N)}_{t_{k-1}^{(N)}}I_{[t_{k-1}^{(N)},t_k^{(N)})}(s),~~0=t_0^{(N)}<t_1^{(N)}<\cdots<t_{n_N}^{(N)}=T,
$$
such that
$$
\lim_{N\to\infty} \max_{1\leq i,j\leq d}{\mathbb  E}^G\left(\int_0^T |(\eta_s^{(N)})^{ij}-\eta_s^{ij}|^2\right)ds=0.
$$
Then
$$
\begin{aligned}
& \mathbb E^G\left(\int_0^T| G(\eta_s)- G(\eta_s^{(N)})|\right)ds\\
\leq &  \mathbb E^G\left(\int_0^T\max\{ G(\eta_s-\eta_s^N),G(\eta_s^{(N)}-\eta_s)\}\right)ds\\
= &\frac{1}{2}\mathbb E^G\left(\int_0^T \sup_{\sigma\in \Sigma}| (\eta_s-\eta_s^{(N)},\sigma)| \right)ds\to 0 \mbox{ as } N\to\infty,
\end{aligned}
$$
which yields that for any $t\in [0,T]$,
$$
\lim_{N\to\infty}\mathbb E^G\left(\left|M_t-\left(2\int_0^t G(\eta_s^{(N)})ds-\int_0^t \eta_s^{(N)} d\langle B\rangle_s\right)\right|\right)=0
$$
Thus, it is sufficient to consider the case $\eta^{ij}\in M_G^{2,0}(0,T)$, $\eta_s^{ij}=\eta_s^{ji}$, $i,j=1,\cdots, d$,   i.e.,
$$
\eta_s=\sum_{k=1}^N\eta_{t_{k-1}}I_{[t_{k-1},t_k)}(s),~~0=t_0<t_1<\cdots<t_N=T.
$$
In this case, we have
$$
\begin{aligned}
M_t=&\sum_{k=1}^N \left(2G\left(\eta_{t_{k-1}}(t_k-t_{k-1})\right)-\left(\eta_{t_{k-1}},\langle B\rangle_{t_k}- \langle B\rangle_{t_{k-1}}\right)\right)\\
=&\sum_{k=1}^N \left(\sup_{\sigma\in \Sigma}  \left(\eta_{t_{k-1}}(t_k-t_{k-1}),\sigma\right)-\left(\eta_{t_{k-1}},\langle B\rangle_{t_k}- \langle B\rangle_{t_{k-1}}\right)\right)\\
=&\sum_{k=1}^N  (t_k-t_{k-1})\left(\sup_{\sigma\in \Sigma}  \left(\eta_{t_{k-1}}, \sigma\right)-\left(\eta_{t_{k-1}},\frac{\left(\langle B\rangle_{t_k}- \langle B\rangle_{t_{k-1}}\right)}{t_k-t_{k-1}}\right)\right)\geq 0.
\end{aligned}
$$

\end{proof}

\begin{proof}  {\bf The proof of the lower bound of Theorem \ref{G-variation-representation-thm}.}

{\it Step 1. Bounded case.} \quad Let $\eta$ be bounded. Then by the G-Girsanov-formula,
$$
\begin{aligned}
\mathbb E^G ( e^{\Phi(B)} ) =& \mathbb E^{G,-\eta}( e^{\Phi(B^{\eta})} )
=\mathbb E^G \left( e^{\Phi(B^{\eta})} \exp\left\{- \int_0^T
\eta_sdB_s-H_T^G(\eta)\right\}
\right).
\end{aligned}
$$
By Jensen's inequality, we have
$$
\begin{aligned}
\log\mathbb E^G ( e^{\Phi(B)} ) \geq & \mathbb E^G \left(\Phi(B^{\eta})
-\int_0^T \eta_s dB_s-H_T^G(\eta)\right)
=\mathbb E^G \left(\Phi(B^{\eta}) -H_T^G(\eta) \right).
\end{aligned}
$$

{\it Step 2. General case.}\quad Choose a
sequence $\{\Phi_n,n\geq 1\}$ of uniformly bounded and Lipschitz
continuous functions such that
$$
\lim_{n\to\infty}\mathbb E^G(|\Phi_n-\Phi|^2)=0.
$$
For $\eta\in (M_G^2(0,T) )^d$ given,  we can find a sequence
$\{\eta^{(m)},m\geq 1\}\subset (M_G^{2,0}(0,T))^d\cap \mathcal B_b(\Omega_T)$ such that
$$
\lim_{m\to\infty}\mathbb E^G\left(\int_0^T|\eta_t^{(m)}-\eta_t|^2
dt\right)=0.
$$
Then
$$
\lim_{m\to\infty}\mathbb E^G\left(\left| H_T^G(\eta^{(m)})
- H_T^G(\eta)
\right|\right)=0,
$$
and for each $n\geq 1$, by Lipschitz continuity of $\Phi_n$, we also have
$$
\begin{aligned}
&\lim_{m\to\infty} \mathbb E^G\left(\left|\Phi_n\left(B^{\eta}\right)
-\Phi_n\left(B^{\eta^{(m)}}\right)  \right|\right)
\leq  l(n,\bar{\sigma}) \lim_{m\to\infty}\mathbb E^G\left( \int_0^T
|\eta_t^{(m)}-\eta_t| dt  \right)   =0
\end{aligned}
$$
where $l(n,\bar{\sigma})$ is a constant independent of $m$. Therefore, in order to prove
$\mathbb E^G ( e^{\Phi(B)} ) \geq \mathbb E^G \left( \Phi\left(B^{\eta}\right) -H_T^G(\eta) \right)$,
it only remains to verify
\begin{equation}\label{lower-b-eq-1}
\lim_{n\to\infty}\sup_{m\geq 1}\mathbb E^G\left(\left|\Phi_n\left(B^{\eta^{(m)}}\right)
-\Phi\left(B^{\eta^{(m)}}\right)  \right|\right)=0.
\end{equation}

Fix $\epsilon>0$. Let $M\in (0,\infty)$ be an uniform upper bound $|\Phi_n|,|\Phi|$, $n\geq 1$. Then
$$
\begin{aligned}
& \mathbb E^G\left(\left|\Phi_n\left(B^{\eta^{(m)}}\right)
 -\Phi\left(B^{\eta^{(m)}}\right)  \right|\right)\\
=&  \sup_{\theta\in\mathcal{ A
}_{0,\infty}^{\Gamma}}E_{P_\theta}\left(\left|\Phi_n\left(B^{\eta^{(m)}}\right)
-\Phi\left(B^{\eta^{(m)}}\right)  \right|\right)\\
\leq & 2M \sup_{\theta\in\mathcal{ A
}_{0,\infty}^{\Gamma}} P_\theta \left(\left|\Phi_n\left(B^{\eta^{(m)}}\right)
-\Phi\left(B^{\eta^{(m)}}\right)  \right|>\epsilon\right)+\epsilon.
\end{aligned}
$$
Therefore, we only need to show that for any $\epsilon>0$,
\begin{equation}\label{lower-b-eq-2}
\lim_{n\to\infty}\sup_{m\geq 1}\sup_{\theta\in\mathcal{ A
}_{0,\infty}^{\Gamma}} P_\theta \left(\left|\Phi_n\left(B^{\eta^{(m)}}\right)
-\Phi\left(B^{\eta^{(m)}}\right)  \right|>\epsilon\right)=0.
\end{equation}
For any $N\in(1,\infty)$,
$$
\begin{aligned}
& \sup_{m\geq 1}\sup_{\theta\in\mathcal{ A
}_{0,\infty}^{\Gamma}} P_\theta \left(\left|\Phi_n\left(B^{\eta^{(m)}}\right)
-\Phi\left(B^{\eta^{(m)}}\right)  \right|>\epsilon\right)\\
= & \sup_{\theta\in\mathcal{ A
}_{0,\infty}^{\Gamma}} P_\theta \left(I_{\left\{\left|\Phi_n\left(B^{\eta^{(m)}}\right)
-\Phi\left(B^{\eta^{(m)}}\right)  \right|>\epsilon\right\} }
\mathcal E_T^{-\eta^{(m)}}(\mathcal E_T^{-\eta^{(m)}})^{-1}\right)\\
\leq & N \sup_{\theta\in\mathcal{ A
}_{0,\infty}^{\Gamma}}E_{P_\theta}\left(I_{\left\{\left|\Phi_n\left(B^{\eta^{(m)}}\right) -\Phi\left(B^{\eta^{(m)}}\right)  \right|>\epsilon\right\} }\mathcal E_T^{-\eta^{(m)}} \right)\\
&+\sup_{\theta\in\mathcal{ A
}_{0,\infty}^{\Gamma}}E_{P_\theta}\left(I_{\left\{\left|\Phi_n\left(B^{\eta^{(m)}}\right)-\Phi\left(B^{\eta^{(m)}}\right)  \right|>\epsilon\right\} }
I_{\{\mathcal E_T^{-\eta^{(m)}}\leq 1/N\}}\right).
\end{aligned}
$$
By Chebyshev's inequality and the G-Girsanov-formula, for all $m\geq 1$,
$$
\begin{aligned}
&\sup_{\theta\in\mathcal{ A
}_{0,\infty}^{\Gamma}}E_{P_\theta}\left(I_{\left\{\left|\Phi_n\left(B^{\eta^{(m)}}\right) -\Phi\left(B^{\eta^{(m)}}\right)  \right|>\epsilon\right\} }\mathcal E_T^{-\eta^{(m)}} \right)\\
\leq &  \frac{1}{\epsilon^2}\sup_{\theta\in\mathcal{ A
}_{0,\infty}^{\Gamma}}E_{P_\theta} \left(\left|\Phi_n\left(B^{\eta^{(m)}}\right) -\Phi\left(B^{\eta^{(m)}}\right)  \right|^2\mathcal E_T^{-\eta^{(m)}} \right)\\
=&  \frac{1}{\epsilon^2}\mathbb E^G(|\Phi_n-\Phi|^2)\to0 \mbox{ as } n\to\infty.
\end{aligned}
$$
We also have that
$$
\begin{aligned}
&\sup_{m\geq 1}\sup_{\theta\in\mathcal{ A
}_{0,\infty}^{\Gamma}}E_{P_\theta}\left(I_{\left\{\left|\Phi_n\left(B^{\eta^{(m)}}\right) -\Phi\left(B^{\eta^{(m)}}\right)  \right|>\epsilon\right\} }
I_{\{\mathcal E_T^{-\eta^{(m)}}\leq 1/N\}}\right) \\
\leq &\frac{1}{\log N}\sup_{m\geq 1} \sup_{\theta\in\mathcal{ A
}_{0,\infty}^{\Gamma}}E_{P_\theta}\left(  -\log \mathcal E_T^{-\eta^{(m)}}\right)
=  \frac{1}{ \log N} \sup_{m\geq 1}\mathbb E^G\left( H_T^G(\eta^{(m)})\right)\to 0 \mbox{ as } N\to\infty.
\end{aligned}
$$
(\ref{lower-b-eq-2}) is valid. Hence,
\begin{equation}\label{lower-b-eq-3}
\mathbb E^G ( e^{\Phi(B )} )\geq \exp \left\{
 \sup_{\eta\in (M_G^2(0,T) )^d} \mathbb E^G \left( \Phi\left(B^{\eta}\right)
 - H_T^G(\eta)
\right) \right\}.
\end{equation}

 {\bf The proof of the upper bound Theorem \ref{G-variation-representation-thm}.}

First, let us consider
$\Phi=f(B_{t_1},\cdots,B_{t_n})$, where $f\in lip((\mathbb
R^d)^n)$ and $0\leq t_1<\cdots<t_n= T$. Set
$$
\|f\|:=\sup_{y\in (\mathbb R^d)^n}|f(y)|,\quad \|f\|_{lip}:=\sup_{y,z\in(\mathbb R^d)^n,y\not=z}\frac{|f(y)-f(z)|}{|y-z|}.
$$
We want to show that there exists a constant $C(\|f\|,\|f\|_{lip})\in (0,\infty)$ that is only dependent on $\|f\|$ and $\|f\|_{lip}$, such that
\begin{equation}\label{upper-b-eq-0}
\begin{aligned}
&\mathbb E^G \left( e^{\Phi\left(B\right)} \right)
\leq
 \exp\bigg\{\sup_{\eta\in(M_{G}^{2,0}(0,T))^d, |\eta|\leq C(\|f|,\|f\|_{lip})}\mathbb E^G\bigg(\Phi\left(B^\eta\right) - H_T^G(\eta)\bigg)\bigg\}
\end{aligned}
\end{equation}

Set $\phi(x_1,\cdots,x_n)=e^{f(x_1,\cdots,x_n)}$. Then $e^{-\|f\|}\leq \|\phi\|\leq e^{\|f\|}$, and  there exists a constant $C_1(\|f\|,\|f\|_{lip})\in (0,\infty)$ that is only dependent on $\|f\|$ and $\|f\|_{lip}$, such that
$$
\|\phi\|_{lip}\leq e^{\|f\|}\sup_{y,z\in(\mathbb R^d)^n,y\not=z}\frac{ e^{|f(y)-f(z)|}-1}{|y-z|}\leq e^{\|f\|}\sup_{\lambda>0}\frac{ e^{\lambda\|f\|_{lip} }-1}{\lambda}\leq C_1(\|f\|,\|f\|_{lip})
$$
For $t_{n-1}\le
t\leq t_{n}$, set
$$
\begin{aligned}
v_{n}(t,x_1,\cdots, x_{n-1},x)=& \mathbb
E^G\left(\phi(x_1,\cdots,x_{n-1}, x+ B_{t_n}-B_t)\right ) \\
=&\mathbb
E^G\left(\phi(x_1,\cdots,x_{n-1}, x+\sqrt{t_n-t}B_{1})\right ),
\end{aligned}
$$
and for any $l=n-1,\cdots,1$, $t\in [t_{l-1},t_l]$, define
$$
\begin{aligned}
v_{l}(t,x_1,\cdots, x_{l-1},x)=& \mathbb
E^G\left(v_{l+1}(x_1,\cdots,x_{l-1}, x+B_{t_{l}}-B_t,x+B_{t_{l}}-B_t)\right )\\
=& \mathbb
E^G\left(v_{l+1}(x_1,\cdots,x_{l-1}, x+\sqrt{t_l-t}B_{1},x+\sqrt{t_l-t}B_{1})\right ).
\end{aligned}
$$
Then by the definition of $G$-Brownian motion,
$v_l:[t_{l-1},t_{l}]\times \mathbb R^d\to \mathbb
R,\,l=1,\cdots,n$ are the solutions of equations:
\begin{equation}\label{upper-b-eq-1}
\left\{\begin{aligned} &\frac{\partial}{\partial t}
v_l(t,x_1,\ldots,x_{l-1},x) +G(D^2_x
v_l(t,x_1,\ldots,x_{l-1},x)) =0,~~t\in [t_{l-1},t_{l}),\\
&v_l(t_{l},x_1,\ldots,x_{l-1},x)=v_{l+1}(t_{l},x_1,\ldots,x_{l-1},x,x), \\
&v_n(T,x_1,\ldots,x_{n-1},x)= \phi(x_1,\ldots,x_{n-1},x).
\end{aligned}
\right.
\end{equation}
Since $\phi$ is bounded, by the regularity result of Krylov (Theorem 6.4.3 in
\cite{krylov}), and Section 4 in Appendix C of \cite{peng-book-10},
there exists a constant $\alpha\in(0,1)$
only depending on $G$, $\underline{\sigma}$, $d$  and $\|\phi\|$ such that for each $l=1,\cdots,n$,  and for any
$\kappa\in (0,t_{l+1}-t_l)$,
$$
\sup_{(x_1,\cdots,x_{l})\in (\mathbb R^d)^{l}}\|v_l(\cdot,x_1,\ldots,x_{l-1},\cdot)\|_{
C^{1+\alpha/2,2+\alpha}([t_l,t_{l+1}-\kappa] \times \mathbb
R^d)}<\infty,
$$
where for given real function $u$ defined on $Q=[T_1,T_2]\times
\mathbb{R}^{d}$, and given constants $\alpha,\beta \in(0,1)$,
\begin{align*}
\left \Vert u\right \Vert _{C^{\alpha,\beta}(Q)}
=&\sup_{\substack{x,y\in
\mathbb{R}^{d},\ x\not =y\\s,t\in [T_1,T_2],s\not =t}}\frac{|u(s,x)-u(t,y)|}%
{|r-s|^{\alpha}+|x-y|^{\beta}},\\
\left \Vert u\right \Vert _{C^{1+\alpha,2+\beta}(Q)}   =& \left \Vert
u\right \Vert _{C^{\alpha,\beta}(Q)}+\left \Vert \partial_{t}u\right
\Vert _{C^{\alpha,\beta }(Q)}+\sum_{i=1}^{d}\left \Vert
\partial_{x^{i}}u\right \Vert _{C^{\alpha,\beta
}(Q)}\\
&+\sum_{i,j=1}^{d}\left \Vert \partial_{x^{i}x^{j}%
}u\right \Vert _{C^{\alpha,\beta}(Q)}.
\end{align*}
By the subadditivity of $\mathbb E^G$,  for all $1\leq l\leq n$, $(t,x_1,\cdots,x_{l-1})\in [t_{l-1},t_l]\times(\mathbb R^d)^{l-1}$,
$$
 \sup_{x,y\in\mathbb R^d,x\not=y}\frac{|v_l(t,x_1,\cdots,x_{l-1},x)-v_l(t,x_1,\cdots,x_{l-1},y)|}{|x-y|}\leq C_1(\|f\|,\|f\|_{lip}),
$$
and  for all $1\leq l\leq n$, $(x_1,\cdots,x_{l-1},x)\in (\mathbb R^d)^{l}$,
$$
|v_l(t,x_1,\cdots,x_{l-1},x)-v_l(s,x_1,\cdots,x_{l-1},x)|\leq \bar{\sigma} C_1(\|f\|,\|f\|_{lip})|t-s|^{1/2}.
$$
Therefore, $x\to\nabla_xv_l(t,x_1,\cdots,x_{l-1},x)$, $(t,x_1,\cdots,x_{l-1})\in [t_{l-1},t_l)\times(\mathbb R^d)^{l-1}$  are uniformly bounded.

 Set
$V_l(t,x_1,\cdots,x_{l-1},x)=\log v_l(t,x_1,\cdots,x_{l-1},x)$. Then
for $(t,x) \in [t_{l-1},t_{l}) \times \mathbb R^d,$
$$
\frac{\partial V_l}{\partial t}=-G\left(\frac{D_x^2v_l}{v_l}\right),
~~ \nabla_x V_l =\frac{ \nabla_xv_l }{v_l},
$$
and
$$
D^2_xV_l=-(\partial_{x^i}V_l \partial_{x^j}V_l)_{i,j=1}^d+\frac{D_x^2v_l}{v_l}.
$$
Therefore, $\mathbb R^d\ni x\to  \nabla_xV_l(t,x_1,\cdots,x_{l-1},x)$, $(t,x_1,\cdots,x_{l-1})\in [t_{l-1},t_l)\times(\mathbb R^d)^{l-1}$  are uniformly bounded, i.e.,there exists a constant $C_2(\|f\|,\|f\|_{lip})\in (0,\infty)$ that is only dependent on $\|f\|$ and $\|f\|_{lip}$, such that  for all $1\leq l\leq n$, $(t,x_1,\cdots,x_{l-1},x)\in [t_{l-1},t_l)\times(\mathbb R^d)^{l}$,
$$
| \nabla_xV_l(t,x_1,\cdots,x_{l-1},x)| \leq C_2(\|f\|,\|f\|_{lip})
$$
and for any $\kappa\in (0,t_l-t_{l-1})$, $\mathbb R^d\ni x\to  \nabla_xV_l(t,x_1,\cdots,x_{l-1},x)$, $(t,x_1,\cdots,x_{l-1})\in [t_{l-1},t_l-\kappa)\times(\mathbb R^d)^{l-1}$  are uniformly  Lipschitz continuous.
Define
$$
U_l(t,x_1,\cdots,x_{l-1},x)=
\nabla_x V_l(t,x_1,\cdots,x_{l-1},x)I_{[t_{l-1},t_l)}(t).
$$

By the Picad iterative approach (cf. \cite{Gao-spa-09}), for any $\kappa\in (0,t_1)$,    the stochastic differential equation
$$
\left\{\begin{aligned} dX_t^{(1)}=&U_1(t,X_{t}^{(1)})d\langle
B\rangle_t+dB_t,~~t\in[0,t_1-\kappa],\\
X_0^{(1)}=&0,
\end{aligned}
\right.
$$
has a unique continuous solution  $\{X_t^{(1)},t\in
[0,t_1-\kappa]$. From the arbitrariness of $\kappa$, and noting that
for any $p\geq 1$,
$$
\mathbb E^G\left(|X_t^{(1)}-X_s^{(1)}|^p\right)\leq 2^p\left(\|U_1\|^p\bar{\sigma}^p|t-s|^p+\bar{\sigma}^{p/2}|t-s|^{p/2}\right)
$$
there exists a unique   continuous process  $\{X_t^{(1)},t\in
[0,t_1)$ such that
$$
\left\{\begin{aligned} dX_t^{(1)}=&U_1(t,X_{t}^{(1)})d\langle
B\rangle_t+dB_t,~~t\in[0,t_1],\\
X_0^{(1)}=&0,
\end{aligned}
\right.
$$
Recursively, for any $1\leq l\leq n$,   there exists a unique   continuous process   $\{X_t^{(1)},t\in
[t_{l-1},t_l] \}$, such that
\begin{equation}\label{upper-b-eq-2}
\left\{\begin{aligned} dX_t^{(l)}=&U_l(t,X_{t_1}^{(1)},\cdots,
X_{t_{l-1}}^{(l-1)},X_t^{(l)})d\langle
B\rangle_t+dB_t,~~t\in[t_{l-1},t_l],\\
X_{t_{l-1}}^{(l)}=&X_{t_{l-1}}^{(l-1)}.
\end{aligned}
\right.
\end{equation}
Define
$$
\quad \tilde{\eta}_t=U_l(t,X_{t_1}^{(1)},\cdots,
X_{t_{l-1}}^{(l-1)},X_t^{(l)})~, t\in [t_{l-1},t_l), ~l=1,\cdots,n; \quad  \tilde{\eta}_T=0
$$
and for any $l=1,\cdots,n$, and $t\in[t_{l-1},t_l)$,
$$
\begin{aligned}
K_t^{(l)}=&\int_{t_{l-1}}^t\bigg(G\left(\frac{D_x^2v_l}{v_l}\right)(t,X_{t_1}^{(1)},\cdots,
X_{t_{l-1}}^{(l-1)},X_s^{(l)})ds\\
&\quad\quad\quad- \frac{1}{2}\frac{D_x^2v_l}{v_l}(t,X_{t_1}^{(1)},\cdots,
X_{t_{l-1}}^{(l-1)},X_s^{(l)}) d\langle B\rangle_s \bigg).
\end{aligned}
$$
Then, by Proposition 1.4 in \cite{peng-book-10},   $ \mathbb E^G(-K_t^{(l)})=0$ for any $t\in[t_{l-1},t_l)$,  and by  Lemma \ref{V-formula-lem-1}, for any $t\in[t_{l-1},t_l)$, $K_t^{(l)}(\omega)\geq 0$ and $t\to K_t^{(l)}$ is increasing for q.s. $\omega$.   Set $K_{t_l}^{(l)}=\lim_{t\uparrow t_l}K_t^{(l)}$.

By It\^o formula
for $G$-Brownian motion (cf. \cite{Gao-spa-09}), for any $
t\in[t_{l-1},t_l)$,
$$
\begin{aligned} &V_l(t,X_{t_1}^{(1)},\cdots,
X_{t_{l-1}}^{(l-1)},X_t^{(l)})-V_l(t_{l-1},X_{t_1}^{(1)},\cdots,
X_{t_{l-1}}^{(l-1)},X_{t_{l-1}}^{(l)})\\
=&\int_{t_{l-1}}^t\frac{\partial V_l}{\partial t}(t,X_{t_1}^{(1)},\cdots,
X_{t_{l-1}}^{(l-1)},X_s^{(l)})ds+\int_{t_{l-1}}^t\nabla_x V_l(t,X_{t_1}^{(1)},\cdots,
X_{t_{l-1}}^{(l-1)},X_s^{(l)})dX_s^{(l)}\\
&+\frac{1}{2}\int_{t_{l-1}}^t  D_x^2 V_l (t,X_{t_1}^{(1)},\cdots,
X_{t_{l-1}}^{(l-1)},X_s^{(l)}) d\langle B\rangle_s \\
=&-K_t^{(l)} +H_t^G(\tilde{\eta})-H_{t_{l-1}}^G(\tilde{\eta}) +\int_{t_{l-1}}^t\tilde{\eta}_sdB_s.
\end{aligned}
$$
Since $\mathbb R^d\ni x\to V_l(t,x_1,\cdots,x_{l-1},x)$, $(t,x_1,\cdots,x_{l-1})\in [t_{l-1},t_l)\times(\mathbb R^d)^{l-1}$  are uniformly  Lipschitz continuous, and $\mathbb [t_{l-1},t_l]\ni t\to V_l(t,x_1,\cdots,x_{l-1},x)$, $(x_1,\cdots,x_{l-1},x)\in (\mathbb R^d)^{l}$  are $1/2$-uniformly H\"older continuous, we have that
$$
\lim_{t\uparrow t_l}\mathbb E^G\left(|V_l(t,X_{t_1}^{(1)},\cdots,
X_{t_{l-1}}^{(l-1)},X_t^{(l)})-V_l(t,X_{t_1}^{(1)},\cdots,
X_{t_{l-1}}^{(l-1)},X_{t_l}^{(l)})|\right)=0.
$$
Hence, $K_{t_l}\in L_G^1(\Omega_T)$, and $\lim_{t\uparrow t_l}\mathbb E^G\left(|K_t^{(l)}-K_{t_l}^{(l)}|\right)=0$,   $\mathbb E^G\left( -K_{t_l}^{(l)} \right)=0$, and
$$
\begin{aligned}
&V_l(t,X_{t_1}^{(1)},\cdots,
X_{t_{l-1}}^{(l-1)},X_{t_l}^{(l)})-H_{t_l}^G(\tilde{\eta})\\
=& V_{l-1}(t,X_{t_1}^{(1)},\cdots,
X_{t_{l-1}}^{(l-1)})-H_{t_{l-1}}^G(\tilde{\eta})-K_{t_l}^{(l)}+\int_{t_{l-1}}^t\tilde{\eta}_sdB_s,
\end{aligned}
$$
which yields that
$$
\begin{aligned}
\Phi(B^{\tilde{\eta}})-H_{T}^G(\tilde{\eta})
=&V_n(t,X_{t_1}^{(n)},\cdots,X_{t_n}^{(n)})-H_{t_n}^G(\tilde{\eta})\\
=& V_{1}(0,0)-\sum_{l=1}^nK_{t_l}^{(l)}+\int_{0}^T\tilde{\eta}_sdB_s.
\end{aligned}
$$
Therefore,
$$
\Phi(B^{\tilde{\eta}})-H_{T}^G(\tilde{\eta})-\int_{0}^T\tilde{\eta}_sdB_s=V_1(0,0)-\sum_{l=1}^nK_{t_l}^{(l)}
$$
and
$$
\begin{aligned}
\mathbb E^G\left(\Phi(B^{\tilde{\eta}})-H_{T}^G(\tilde{\eta})\right)=& V_1(0,0).
\end{aligned}
$$
Since $\sum_{l=1}^nK_{t_l}^{(l)}\geq 0$, $q.s.$,
we obtain that
$$
\begin{aligned}
&\mathbb E^G \left( e^{\Phi\left(B\right)} \right)=\mathbb E^G\left(\exp\left\{\Phi\left(B^{\tilde{\eta }}\right)- \int_0^T
\tilde{\eta}_sdB_s-H_T^G(\tilde{\eta})\right\}\right) \\
=& e^{V_1(0,0)}\mathbb E^G\left(e^{-\sum_{l=1}^nK_{t_l}^{(l)}}\right)\\
\leq&
\exp\left\{\mathbb E^G\left(\Phi\left(B^{\tilde{\eta }}\right)-H_T^G(\tilde{\eta})\right)\right\}.
\end{aligned}
$$

Now, for $m,N\geq 2/\min_{1\leq i\leq n}(t_l-t_{l-1})$, define
$$
\eta_s^{(m,N)}=\sum_{l=1}^n\sum_{k=1}^N \tilde{\eta}_{t_{l-1}+(k-1)(t_l-t_{l-1}-1/m)/N}I_{[t_{l-1}+\frac{(k-1)(t_l-t_{l-1}-1/m)}{N}, t_{l-1}+\frac{k(t_l-t_{l-1}-1/m)}{N})}(s).
$$
Then $\eta_s^{(m,N)}\in M_{G}^{2,0}(0,T)$,   $|\eta_s^{(m,N)}|\leq C_2(\|f|,\|f\|_{lip})$, and
$$
\lim_{m\to\infty}\limsup_{N\to\infty}\mathbb E^G\left(\int_0^T|\eta_s^{(m,N)}-\tilde{\eta}_s|^2\right)=0.
$$
Therefore,
\begin{equation}\label{upper-b-eq-3}
\begin{aligned}
&\mathbb E^G\left(\Phi\left(B^{\tilde{\eta }}\right)-H_T^G(\tilde{\eta})\right)
\leq
 \sup_{\eta\in(M_{G}^{2,0}(0,T))^d, |\eta|\leq C_2(\|f|,\|f\|_{lip})}\mathbb E^G\bigg(\Phi\left(B^\eta\right) - H_T^G(\eta)\bigg) ,
\end{aligned}
\end{equation}
and so (\ref{upper-b-eq-0}) holds.

For general bounded function $\Phi\in L_G^1(\Omega_T)$, choose a sequence
$\{\Phi_n,n\geq 1\}\subset L_{ip}(\Omega_T)$ of uniformly bounded and Lipschitz
continuous functions  such that
$$
\lim_{n\to\infty}\mathbb E^G(|\Phi_n-\Phi|^2)=0.
$$
Then
$$
\lim_{n\to\infty}\mathbb E^G(|e^{\Phi_n}-e^\Phi|)=0.
$$
By the above proof, there exists a sequence of positive constants $C_n$ such that
$$
\log\mathbb E^G\left(\exp\{\Phi_n \}\right)
\leq
 \sup_{\eta\in(M_{G}^{2,0}(0,T))^d, |\eta|\leq C_n}\mathbb E^G\bigg(\Phi_n\left(B^\eta\right) - H_T^G(\eta)\bigg).
$$
Set $\mathbb D=\cup_{n\geq 1} \{\eta\in(M_{G}^{2,0}(0,T))^d; |\eta|\leq C_n\}$. Then
$$
\log\mathbb E^G\left(\exp\{\Phi_n \}\right)
\leq
 \sup_{\eta\in \mathbb D}\mathbb E^G\bigg(\Phi_n\left(B^\eta\right) - H_T^G(\eta)\bigg).
$$
Since
$$
\begin{aligned}
&
\bigg|\sup_{\eta\in \mathbb D}\mathbb E^G\left(\Phi_n\left(B^\eta \right)- H_T^G(\eta)\right)
-\sup_{\eta\in \mathbb D}\mathbb E^G\left(\Phi\left(B^\eta\right)- H_T^G(\eta)\right)\bigg|\\
\leq &\sup_{\eta\in \mathbb D}\mathbb E^G
\left(\left|\Phi_n\left(B^\eta \right)-\Phi\left(B^\eta\right)\right|\right),
\end{aligned}
$$
and the same  proof as (\ref{lower-b-eq-1}) yields that
$$
\begin{aligned}
&\lim_{n\to\infty}\sup_{\eta\in \mathbb D}\mathbb E^G
\left(\left|\Phi_n\left(B^\eta \right)-\Phi\left(B^\eta\right)\right|\right)=0,
\end{aligned}
$$
we obtain
\begin{equation}\label{upper-b-eq-4}
\begin{aligned}
&\mathbb E^G \left( e^{\Phi} \right)  \leq
\exp\bigg\{\sup_{\eta\in \mathbb D}E^G\left(\Phi\left(B^\eta\right) -H_T^G(\eta)\right)\bigg\}
\end{aligned}
\end{equation}
which with together (\ref{lower-b-eq-3}) yields the conclusion of
Theorem \ref{G-variation-representation-thm}.

\end{proof}

\section{An abstract large deviation principle for functionals of $G$-Brownian motion}

In this section, we apply the variation representation to study the
large deviations for functionals of $G$-Brownian motion. The inverse of the Varadhan Lemma under a
$G$-expectation is presented.  An abstract large deviation principle for functionals of $G$-Brownian motion is obtained.

Let  $(\mathcal Y,\rho)$ be a Polish space and let $\Psi^{\epsilon }:
\Omega_T \times \mathbb A\rightarrow   \mathcal Y$ be a map. Set
$$
Z^{\epsilon}:= \Psi^{\epsilon }(\sqrt{\epsilon}B, \langle B\rangle).
$$

Define
\begin{equation}\label{H-def}
{\mathbb H} =\left\{f(\cdot)=\int_0^\cdot f'(s)ds; f'\in L^2([0,T],\mathbb R^d)\right\},\quad
\|f \|_{H}=\|f'\|_{L^2};
\end{equation}
\begin{equation}\label{G-def}
{\mathbb G} =\left\{g(\cdot)=\int_0^\cdot g'(s)ds;  g'\in L^2([0,T],\mathbb R^{d\times d})\right\},\quad
\|g \|_{G}=\int_{0}^T\|g'(t)\|_{HS} dt;
\end{equation}
and
\begin{equation}\label{A-def}
\begin{aligned}
\mathbb A =\bigg\{g\in \mathbb G;\ ~ g'(s)\in \Sigma
\mbox{ for any } s\in[0,T]\bigg\}.
\end{aligned}
\end{equation}
Then $(\mathbb A,\|\cdot\|_{G})$ is a closed convex subset of $\mathbb G$. We also denote
$$
\mathbb H_s  =\{f\in\mathbb H; f'(t)=\theta_1I_{[0,t_1]}(t)+\sum_{i=2}^{m} \theta_i I_{(t_{i-1},t_i]}(t),0<t_1<\cdots<t_n=T,\theta_i\in\mathbb R^d\}
$$
and
$$
\|g\|:=\sup_{t\in[0,T]}\|g(t)\|_{HS},\quad g\in\mathbb A;\quad  \|f\|:=\sup_{t\in[0,T]}|f(t)|,\quad f\in\mathbb H,
$$
where $\|A\|_{HS}:=\sqrt{\sum_{ij} a_{ij}^2}$ is the Hilbert-Schmidt
norm of a matrix $A=(a_{ij})$.  Define
$$
\rho_{HG}((f_1,g_1),(f_2.g_2))=\|f_1-f_2\|+\|g_1-g_2\|,\quad (f_1,g_1),(f_2.g_2)\in\mathbb H\times\mathbb A.
$$

We introduce the following {\it Assumption (A)}:

(A0).\quad  For any $\Phi\in C_b(\mathcal Y)$, $\Phi(Z^{\epsilon})$
is quasi-continuous;

There exists a map $\Psi:
\mathbb H\times \mathbb A \rightarrow \mathcal Y$   such that the following
conditions (A1), (A2) and (A3) hold:

(A1).\quad  For each $N\geq 1$,  if $f_{n},n\geq 1$,  $f\in
\mathbb H$, $g_n\in \mathbb A$  and $g\in \mathbb A$ satisfy  that $\|f_n\|_H\leq N$, $\|f\|_H\leq N$,
  $\|f_{n}-f\|_H\to 0$ and $\|g_n-g\|_{G}\to 0$, then
$$
\Psi\left( f_{n},g_n\right) \rightarrow \Psi\left(f,g\right);
$$

(A2).\quad  For $\Phi\in C_b(\mathcal Y)$,     for each $r>0$,
\begin{equation*}
\begin{aligned}
\lim_{\epsilon\to
0}\sup_{\eta\in(M_{G}^{2}(0,T))^d\cap \mathcal B_b(\Omega_T)\atop\int_0^T\mathbb
|\eta_s|^2ds\leq r}  \mathbb{E}^G\bigg(\bigg|&\Phi \circ
\Psi^\epsilon\left(\sqrt{\epsilon}B_\cdot+\int_0^\cdot
\eta_sd\langle B \rangle_s,\langle B \rangle\right)\\
&-  \Phi \circ \Psi
\left(\int_0^\cdot \eta_sd\langle B \rangle_s, \langle B \rangle\right)  \bigg|\bigg) =
0;
\end{aligned}
\end{equation*}

(A3).\quad There exists a sequence of continuous maps $\Psi^{(N)}:
(\mathbb H\times\mathbb A,\rho_{HG}) \rightarrow (\mathcal Y,\rho)$   such that for each $l\in(0,\infty)$,
$$
\lim_{N\to\infty} \sup_{\|f\|_H\leq l,g \in\mathbb A}\rho(\Psi(f,g),\Psi^{(N)}(f,g))=0.
$$

\begin{rmk}
Assumption (A) is slightly different from the classical case(cf. \cite{BoueDupuis-AP-98}).
We have an additional condition  (A0).  In the classical case, the condition (A0) is always true.
\end{rmk}

Let $I:\mathcal Y\rightarrow
[0,\infty ]$ be defined  by
\begin{equation} \label{rate-function-def-eq-1}
I(y)=\inf_{(f,g)\in \mathbb H\times\mathbb A}\left\{
\frac{1}{2}\int_0^T (f'(s),g'(s)f'(s))ds;~y
=\Psi\left(\int_{0}^{\cdot }g'(s)f'(s)ds,g\right)\right\}.
\end{equation}

Since  $\varphi(t)=\int_{0}^{t }g'(s)f'(s)ds, t\in[0,T]$ yields that
$f'(s)=g'(s)^{-1}\varphi'(s)$, it is easy to get the following
representation of $I(y)$:
\begin{equation}\label{rate-function-def-eq-4}
I(y)=\inf_{(f,g)\in \mathbb H\times \mathbb A}\left\{J(f,g),~y=\Psi(f,g)\right\},~~~y\in\mathcal Y,
\end{equation}
where
\begin{equation}\label{rate-function-def-eq-2}
J(f,g)=\left\{\begin{array}{ll}\displaystyle\frac{1}{2} \int_0^T
(f'(s),(g'(s))^{-1}f'(s))ds,~&~(f,g)\in \mathbb H\times \mathbb A,\\
\\
+\infty,&~otherwise.
\end{array}
\right.
\end{equation}

\begin{lem}\label{quadratic-process-G-expectation-lem}
(1). \quad Let $\Upsilon:  (\mathbb A,\|\cdot\|)\to \mathbb R$ be a bounded continuous function. Then
\begin{equation}\label{quadratic-process-G-expectation-lem-eq-1}
\mathbb E^G\left(\Upsilon\left(\langle B\rangle \right)\right)=\sup_{g\in\mathbb A}\Upsilon\left(g \right).
\end{equation}

(2). \quad Let $\Phi\in C_b(\mathcal Y)$, and let $\Psi:
\mathbb H\times \mathbb A \rightarrow \mathcal Y$  satisfy (A1) and (A3). Then for each function $f\in \mathbb H$,
\begin{equation}\label{quadratic-process-G-expectation-lem-eq-2}
\begin{aligned}
&  \mathbb{E}^G\left(\Phi \circ
\Psi\left( \int_0^\cdot f'(s)d\langle B \rangle_s,\langle B \rangle\right) -H_T^G(f)\right)\\
=&
\sup_{ g \in  \mathbb A}
 \left(\Phi \circ
\Psi\left(\int_0^\cdot g'(s)f'(s)ds,g\right) -\frac{1}{2}\int_0^T (f'(s),g'(s)f'(s)) ds\right).
\end{aligned}
\end{equation}

\end{lem}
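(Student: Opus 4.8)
For part (1), the strategy is to reduce to the structure of $\langle B\rangle$ as revealed by Lemma \ref{V-formula-lem-1} together with the representation theorem \eqref{Girsanov-formula-proof-eq-1} of $G$-expectation. First I would observe that for any $g\in\mathbb A$ the process $g'(s)\,ds$ is an admissible density, and by approximating $g$ by piecewise-constant $\Sigma$-valued paths one shows, using the martingale structure of $\langle B\rangle$ under each $P_\theta$ and the fact that $\langle B\rangle_{t}=\int_0^t\theta_s\theta_s^{\top}\,ds$ with $\theta_s\theta_s^{\top}=\sigma_s\in\Sigma$, that $\langle B\rangle$ can be made uniformly close (in sup norm, q.s.) to $g(\cdot)=\int_0^\cdot g'(s)\,ds$ along a suitable choice of $\theta$. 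This gives the lower bound $\mathbb E^G(\Upsilon(\langle B\rangle))\geq\sup_{g\in\mathbb A}\Upsilon(g)$. For the reverse inequality, note that for every $\theta\in\mathcal A_{0,\infty}^{\Gamma}$ the path $s\mapsto \langle B\rangle_s$ is, $P_\theta$-a.s., of the form $\int_0^\cdot \sigma_s\,ds$ with $\sigma_s\in\Sigma$, hence lies in $\mathbb A$; so $E_{P_\theta}(\Upsilon(\langle B\rangle))\leq\sup_{g\in\mathbb A}\Upsilon(g)$ for every $\theta$, and taking the supremum over $\theta$ via \eqref{Girsanov-formula-proof-eq-1} closes the argument. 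Continuity and boundedness of $\Upsilon$ on $(\mathbb A,\|\cdot\|)$ are used to pass limits through the approximation and to guarantee $\Upsilon(\langle B\rangle)\in L^1_G(\Omega_T)$.

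For part (2), I would apply Theorem \ref{G-variation-representation-thm} with $\Phi$ replaced by the bounded functional $\omega\mapsto \Phi\circ\Psi\!\left(\int_0^\cdot f'(s)\,d\langle B\rangle_s,\langle B\rangle\right)$; this is legitimate once one checks (via (A0)-type quasi-continuity, (A1), (A3), and boundedness of $\Phi$) that this functional lies in $L^1_G(\Omega_T)$ and is bounded. Actually, a cleaner route is to note that here we have an \emph{explicit} functional rather than an exponential, so instead of invoking the full variational theorem I would directly use the representation \eqref{Girsanov-formula-proof-eq-1}: write the left side as $\sup_{\theta}E_{P_\theta}(\,\cdot\,)$ and observe that on each $P_\theta$, $\langle B\rangle_\cdot=\int_0^\cdot g'_\theta(s)\,ds$ with $g_\theta\in\mathbb A$, so that $\int_0^\cdot f'(s)\,d\langle B\rangle_s=\int_0^\cdot g'_\theta(s)f'(s)\,ds$ and $H_T^G(f)$ evaluated under $P_\theta$ equals $\frac12\int_0^T(f'(s),g'_\theta(s)f'(s))\,ds$. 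This shows the left side is dominated by the right side. For the matching lower bound I would, given $g\in\mathbb A$, construct $\theta$ with $\theta_s\theta_s^{\top}$ approximating $g'(s)$ (as in part (1)), use (A1) to get $\Psi$ of the approximating arguments close to $\Psi(\int_0^\cdot g'f'\,ds,g)$, and use boundedness of $\Phi$ plus the convergence $H_T^G$ along the approximation to recover $\Phi\circ\Psi(\int_0^\cdot g'(s)f'(s)\,ds,g)-\frac12\int_0^T(f',g'f')\,ds$ in the limit.

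The main obstacle I anticipate is the approximation step underlying both parts: converting a deterministic $\Sigma$-valued density $g'$ into an admissible integrand $\theta\in\mathcal A_{0,\infty}^{\Gamma}$ whose induced quadratic variation $\int_0^\cdot\theta_s\theta_s^{\top}\,ds$ is uniformly (sup-norm, q.s.) close to $g$, while simultaneously keeping control — via (A1), which only gives continuity for $f$ in bounded balls of $\mathbb H$ — of the argument $\int_0^\cdot\theta_s\,d\omega_s$ appearing inside $\Psi^\epsilon$ or $\Psi$. One has to be careful that the "$f$" slot (the Brownian part) is genuinely present only after the dilation by $\sqrt\epsilon$ in the LDP applications; here in Lemma \ref{quadratic-process-G-expectation-lem} the Brownian part of $B^{\tilde\eta}$-type arguments must be shown to be negligible or absent, so that only the $\langle B\rangle$-dependence survives. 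A secondary technical point is verifying the integrability/quasi-continuity hypotheses needed to legitimately invoke \eqref{Girsanov-formula-proof-eq-1} or Theorem \ref{G-variation-representation-thm} for the composite functional, which should follow from (A0), (A1), (A3) and the boundedness of $\Phi$, together with the fact that $\langle B\rangle$ and $G$-stochastic integrals against it belong to the appropriate $L^p_G$ spaces.
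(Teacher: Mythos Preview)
Your strategy is sound and leads to the result, but it differs from the paper's argument in a way worth noting.

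For part (1), the paper works entirely inside the $G$-framework: it first treats cylinder functionals $\Upsilon(g)=\psi(g_{t_1},g_{t_2}-g_{t_1},\ldots,g_{t_m}-g_{t_{m-1}})$ using the independent, maximally-distributed increments of $\langle B\rangle$ (Chapter III, Theorem 5.3 of \cite{peng-book-10}) to obtain $\mathbb E^G(\psi(\langle B\rangle_{t_1},\ldots))=\sup_{\theta_1,\ldots,\theta_m\in\Sigma}\psi(\theta_1 t_1,\ldots,\theta_m(t_m-t_{m-1}))$, then extends to Lipschitz $\Upsilon$ by piecewise-linear interpolation of $g$, and finally to general bounded continuous $\Upsilon$ by monotone Lipschitz approximation. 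Your route via the representation \eqref{Girsanov-formula-proof-eq-1} and the pathwise identity $\langle B\rangle_t=\int_0^t\theta_s\theta_s^\top\,ds$ under each $P_\theta$ is a legitimate and more conceptual alternative. Note, however, that what you flag as the ``main obstacle'' is actually trivial: for the \emph{deterministic} choice $\theta_s=(g'(s))^{1/2}\in\Gamma$ one has $\langle B\rangle_\cdot=g$ exactly under $P_\theta$, so no approximation of $g$ is needed for the lower bound. The real cost of your approach is what you call the ``secondary'' point: invoking \eqref{Girsanov-formula-proof-eq-1} requires $\Upsilon(\langle B\rangle)\in L^1_G(\Omega_T)$, and verifying this (quasi-continuity of $\omega\mapsto\langle B\rangle_\cdot(\omega)$ as a path) typically goes through a cylinder approximation anyway --- which is precisely the paper's starting point.

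For part (2), the paper uses neither \eqref{Girsanov-formula-proof-eq-1} nor Theorem \ref{G-variation-representation-thm}. It approximates $f$ by step functions $f_N$, invokes (A3) to replace $\Psi$ by a uniformly-close \emph{continuous} map $\Psi^{(N)}:(\mathbb H\times\mathbb A,\rho_{HG})\to\mathcal Y$, and observes that the resulting expression $\Phi^{(N)}(g)=\Phi\circ\Psi^{(N)}\big(\int_0^\cdot g'f_N',\,g\big)-\tfrac12\int_0^T(f_N',g'f_N')$ is a bounded continuous function of $g\in(\mathbb A,\|\cdot\|)$; part (1) then applies directly, and one passes to the limit uniformly in $g$. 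Your pathwise argument gives the correct inequalities once the composite functional is known to lie in $L^1_G(\Omega_T)$, but since $\Psi$ is only continuous in the sense of (A1) --- not sup-norm continuous in $g$ --- establishing this membership essentially forces you back through (A3) and an approximation of the same kind. So for part (2) the two approaches largely coincide in substance.
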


\begin{proof}
(1). Firstly, let us show (\ref{quadratic-process-G-expectation-lem-eq-1}) for
$$
\Upsilon(g) =\psi(g_{t_1},g_{t_2}-g_{t_1},\cdots,g_{t_m}-g_{t_{m-1}})
$$
where $\psi$ is bounded continuous in $(\mathbb R^{d\times d})^m$, and $0<t_1<t_2<\cdots<t_m\leq T$. Since $\langle B\rangle_t-\langle B\rangle_s$ is independent of $\Omega_s$ for any $s<t$, by Chapter III, Theorem 5.3 in \cite{peng-book-10}, we have that
$$
\begin{aligned}
&\mathbb E^G\left(\psi(\langle B\rangle_{t_1},\langle B\rangle_{t_2}-\langle B\rangle_{t_1},\cdots,\langle B\rangle_{t_m}-\langle B\rangle_{t_{m-1}})\right)\\
=&\sup_{\theta_1,\theta_2,\cdots,\theta_m\in \Sigma}\psi(\theta_1 t_1 ,\theta_2(t_2-t_1),\cdots, \theta_m(t_m-t_{m-1})).
\end{aligned}
$$
For any $\theta_1,\theta_2,\cdots,\theta_m\in \Sigma$, set $g'(t)=\theta_1I_{[0,t_1]}+\sum_{i=2}^m \theta_i I_{(t_{i-1},t_i]}$. Then  $\psi(\theta_1 t_1 ,\theta_2(t_2-t_1),\cdots, \theta_m(t_m-t_{m-1}))=\psi(g(t_1) ,g(t_2)-g(t_1),\cdots, g(t_m)-g(t_{m-1}))$. Therefore,
$$
\begin{aligned}
&\sup_{\theta_1,\theta_2,\cdots,\theta_m\in \Sigma}\psi(\theta_1 t_1 ,\theta_2(t_2-t_1),\cdots, \theta_m(t_m-t_{m-1}))\\
\leq &\sup_{g\in\mathbb A}\psi(g(t_1) ,g(t_2)-g(t_1),\cdots, g(t_m)-g(t_{m-1})).
\end{aligned}
$$
On the other hand, for any $g\in \mathbb A$, set $\theta_1=g(t_1)/t_1$, and $\theta_i=(g(t_i)-g(t_{i-1}))/(t_i-t_{i-1})$ for $i=2,\cdots,m$. Then $\theta_i\in \Sigma$  and
$g(t_i)-g(t_{i-1})=\theta_i (t_i-t_{i-1})$ for any $i=1,\cdots,m$, where $t_0=0$. Therefore,
$$
\begin{aligned}
&\sup_{\theta_1,\theta_2,\cdots,\theta_m\in \Sigma}\psi(\theta_1 t_1 ,\theta_2(t_2-t_1),\cdots, \theta_m(t_m-t_{m-1}))\\
\geq &\sup_{g\in\mathbb A}\psi(g(t_1) ,g(t_2)-g(t_1),\cdots, g(t_m)-g(t_{m-1})).
\end{aligned}
$$
Therefore, (\ref{quadratic-process-G-expectation-lem-eq-1}) holds in this case.

Next, we assume that  $\Upsilon$ is Lipschitz continuous with respect to the uniform topology, i.e., there exists a
constant $l>0$ such that
$$
|\Upsilon(g)-\Upsilon(f)|\leq l \sup_{t\in [0,T]}\|g(t)-f(t)\|_{HS}~ \mbox{ for all
}~g,f\in \mathbb A.
$$
For each $N\geq 1$,  set $t_i^N=\frac{iT}{N}$, $1\leq i\leq N$. For any $(x_1,x_2,\cdots,x_N)\in \Sigma ^N$, set
$$
x^{(N)}(t):=x_1 (t\wedge t_1) +\sum_{i=2}^N x_i(t\wedge {t_{i}}-t\wedge {t_{i-1}}),~t\in [0,T],
$$
and  $\tilde{\psi}(x_1,x_2,\cdots,x_N)=\Upsilon\left(x^{(N)}\right)$. We can extend continuously $\tilde{\psi}(x_1,x_2,\cdots,x_N)$  to $ (\mathbb R^{d\times d})^N$. Define
$$
{\psi}(x_1,x_2,\cdots,x_N):=\tilde{\psi}\left(\frac{x_1}{t_1},\frac{x_2}{t_2-t_1},\cdots,\frac{x_N}{t_N-t_{N-1}}\right),~~(x_1,x_2,\cdots,x_N)\in (\mathbb R^{d\times d})^N.
$$
For $g\in\mathbb A$, set
$$
g^{(N)}(t):=\frac{g(t_1)}{t_1}(t\wedge t_1) +\sum_{i=2}^N \frac{(g(t_i)-g(t_{i-1}))}{t_i-t_{i-1}}(t\wedge {t_{i}}-t\wedge {t_{i-1}}), ~t\in [0,T],
$$
and  define
$$
\Upsilon^{(N)}(g):=\Upsilon(g^{(N)})=\psi\left(g(t_1), g(t_2)-g(t_{1}),
\cdots, g(t_N)-g(t_{N-1})\right).
$$
Then
$$
\mathbb E^G\left(\Upsilon^{(N)}\left(\langle B\rangle \right)\right)=\sup_{g\in\mathbb A}\Upsilon^{(N)}\left(g \right).
$$
and
$$
|\Upsilon(g)-\Upsilon^{(N)}(g)|\leq 2 l\max_{1\leq i\leq N}\sup_{t\in [t_{i-1},t_i]}\|g(t)-g(t_{i-1})\|_{HS}\leq 2l\bar{\sigma}T/N.
$$
Therefore,
$$
\mathbb E^G\left(\Upsilon\left(\langle B\rangle \right)\right)=\lim_{N\to\infty}\mathbb E^G\left(\Upsilon^{(N)}\left(\langle B\rangle \right)\right)=\lim_{N\to\infty}\sup_{g\in\mathbb A}\Upsilon^{(N)}\left(g \right)=\sup_{g\in\mathbb A}\Upsilon\left(g \right).
$$

Now, by the proof of Lemma 3.1, Chapter VI in \cite{peng-book-10},  for general bounded continuous $\Upsilon$, we can choose a sequence of Lipschitz functions $\Upsilon_N$  such that $\Upsilon_N \uparrow \Upsilon$. Therefore
$$
\begin{aligned}
\mathbb E^G\left(\Upsilon\left(\langle B\rangle \right)\right)=&\sup_{\theta\in\mathcal{ A
}_{0,\infty}^{\Gamma}}\sup_{N\ge1}E_{P_\theta} \left(\Upsilon_N\left(\langle B\rangle \right)\right)\\
=&\sup_{N\ge1}\mathbb E^G\left(\Upsilon_N\left(\langle B\rangle \right)\right)=\sup_{N\ge1}\sup_{g\in\mathbb A}\Upsilon_N\left(g \right)=\sup_{g\in\mathbb A}\Upsilon\left(g \right).
\end{aligned}
$$

(2).  Choose a sequence of simple functions $f_N'=\theta_1^NI_{[0,t_1^N]}+\sum_{i=2}^{m_N} \theta_i^N I_{(t_{i-1}^N,t_i^N]}$ such that $\sup_{N\geq 1}\|f_N\|_H<\infty$ and $\int_0^T|f'(s)-f_N'(s)|^2 ds\to 0$ as $N\to\infty$. Then
$$
\sup_{g\in\mathbb A}\left|\int_0^T (f'(s),g'(s)f'(s)) ds-\int_0^T (f_N'(s),g'(s)f_N'(s)) ds\right|\to0,
$$
and
$$
\sup_{g\in\mathbb A}\sup_{t\in[0,T]}\left|\int_0^tg'(s)f'(s)ds-\int_0^tg'(s)f_N'(s) ds\right|\to 0.
$$
Let $\Psi^{(N)}:
(\mathbb H\times\mathbb A,\rho_{HG}) \rightarrow (\mathcal Y,\rho)$   such that for any $l\in (0,\infty)$,
$$
\lim_{N\to\infty} \sup_{\|\varphi\|_H\leq l,g\in \mathbb A}\rho(\Psi(\varphi,g),\Psi^{(N)}(\varphi,g))=0.
$$
Define
$$
\Phi^{(N)}(g)=\Phi \circ
\Psi_N\left(\int_0^\cdot g'(s)f_N'(s) ds,g\right) -\frac{1}{2}\int_0^T (f_N'(s),g'(s)f_N'(s)) ds
$$
Then
$$
\lim_{N\to\infty} \sup_{g\in \mathbb A}\left|\Phi^{(N)}(g))-\left(\Phi \circ
\Psi\left(\int_0^\cdot g'(s)f'(s)ds,g\right) -\frac{1}{2}\int_0^T (f'(s),g'(s)f'(s)) ds\right)\right|=0,
$$
and by (1),
$$
\mathbb E^G(\Phi^{(N)}(\langle B\rangle))=\sup_{g\in\mathbb A}\Phi^{(N)}(g),
$$
Therefore, (\ref{quadratic-process-G-expectation-lem-eq-2}) holds.
\end{proof}

\begin{lem}\label{laplace-principle-lem-2}
Let (A2) hold. Then  for any  $\Phi\in C_b(\mathcal Y)$ and each $N\geq 1$,
\begin{equation*}
\begin{aligned}
\lim_{\epsilon\to
0}\sup_{\eta\in(M_{G}^{2,0}(0,T))^d\cap \mathcal B_b(\Omega_T)\atop\int_0^T\mathbb
E^G(|\eta_s|^2)ds\leq N}  \mathbb{E}^G\bigg(\bigg|&\Phi \circ
\Psi^\epsilon\left(\sqrt{\epsilon}B_\cdot+\int_0^\cdot
\eta_sd\langle B \rangle_s,\langle B \rangle\right)\\
&-  \Phi \circ \Psi
\left(\int_0^\cdot \eta_sd\langle B \rangle_s, \langle B \rangle\right)  \bigg|\bigg) =
0;
\end{aligned}
\end{equation*}

\end{lem}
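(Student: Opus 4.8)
The plan is to deduce the lemma from hypothesis (A2) by truncating each admissible control at the level of its accumulated $L^2$-mass, so that the truncated control becomes bounded quasi-surely; the piece discarded by the truncation will be controlled using only the boundedness of $\Phi$. This mirrors the standard reduction step of the Bou\'e--Dupuis theory, adapted to the $G$-setting.

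Fix $\Phi\in C_b(\mathcal Y)$ and set $M:=\sup_{y\in\mathcal Y}|\Phi(y)|$; for a control $\eta$ write
$$
F^\epsilon(\eta):=\mathbb E^G\!\left(\left|\Phi\circ\Psi^\epsilon\Big(\sqrt\epsilon B_\cdot+\int_0^\cdot\eta_s\,d\langle B\rangle_s,\,\langle B\rangle\Big)-\Phi\circ\Psi\Big(\int_0^\cdot\eta_s\,d\langle B\rangle_s,\,\langle B\rangle\Big)\right|\right).
$$
Fix $N\geq1$ and $R>0$, let $\varphi_R\colon[0,\infty)\to[0,1]$ be Lipschitz with $\varphi_R\equiv1$ on $[0,R]$ and $\varphi_R\equiv0$ on $[2R,\infty)$, and for $\eta\in(M_G^{2,0}(0,T))^d\cap\mathcal B_b(\Omega_T)$ with $\int_0^T\mathbb E^G(|\eta_s|^2)\,ds\leq N$ put $g_t:=\int_0^t|\eta_s|^2\,ds$ (a continuous, nondecreasing, adapted process, piecewise linear in $t$ with bounded coefficients in $L_G^2(\Omega_{t_i})$) and define the truncated control $\eta^R_s:=\eta_s\,\varphi_R(g_s)$. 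First I would check $\eta^R\in(M_G^2(0,T))^d\cap\mathcal B_b(\Omega_T)$: it is adapted and bounded by $\|\eta\|_\infty$, and on each interval $[t_i,t_{i+1})$ of the partition of $\eta$ the map $s\mapsto\eta_s\varphi_R(g_s)$ is continuous with values that are bounded and in $L_G^2(\Omega_s)$ (a Lipschitz image of $(\eta_{t_i},g_{t_i})$), so $\eta^R$ is an $M_G^2(0,T)$-limit of piecewise constant processes. Since $g$ is continuous and nondecreasing, with $\sigma:=\inf\{s:g_s\geq2R\}\wedge T$ one has $\{s:g_s<2R\}\subseteq[0,\sigma]$ and $g_\sigma\leq 2R$, hence
$$
\int_0^T|\eta^R_s|^2\,ds=\int_0^\sigma|\eta_s|^2\,\varphi_R(g_s)^2\,ds\leq\int_0^\sigma|\eta_s|^2\,ds=g_\sigma\leq2R\qquad\text{q.s.};
$$
in particular $\eta^R$ is admissible in (A2) with $r=2R$, and there is no overshoot precisely because $g$ has no jumps.

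Next I would compare $\eta$ with $\eta^R$ on $A:=\{g_T>R\}$. By Markov's inequality and sub-additivity of $\mathbb E^G$,
$$
c^G(A)\leq\frac1R\,\mathbb E^G\!\left(\int_0^T|\eta_s|^2\,ds\right)\leq\frac1R\int_0^T\mathbb E^G(|\eta_s|^2)\,ds\leq\frac NR.
$$
On $A^c$ one has $g_s\leq g_T\leq R$ for all $s$, so $\varphi_R(g_s)=1$, $\eta^R_s=\eta_s$ for all $s$, and therefore the perturbed paths $\sqrt\epsilon B_\cdot+\int_0^\cdot\eta_s\,d\langle B\rangle_s$ and $\sqrt\epsilon B_\cdot+\int_0^\cdot\eta^R_s\,d\langle B\rangle_s$ coincide q.s.\ on $A^c$; since $\Psi^\epsilon$ and $\Psi$ depend only on their arguments and $|\Phi|\leq M$, the two differences between the $\Phi\circ\Psi^\epsilon$-terms (resp.\ the $\Phi\circ\Psi$-terms) built from $\eta$ and from $\eta^R$ vanish q.s.\ on $A^c$ and are $\leq2M$ everywhere, so the $G$-expectation of each is at most $2M\,c^G(A)\leq2MN/R$. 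A triangle inequality gives $F^\epsilon(\eta)\leq F^\epsilon(\eta^R)+4MN/R$, so that, taking the supremum over the controls $\eta$ allowed in the lemma and noting that every $\eta^R$ so produced is admissible in (A2) with $r=2R$,
$$
\sup_{\eta}F^\epsilon(\eta)\;\leq\;\frac{4MN}{R}\;+\;\sup_{\substack{\zeta\in(M_G^2(0,T))^d\cap\mathcal B_b(\Omega_T)\\\int_0^T|\zeta_s|^2\,ds\leq2R}}F^\epsilon(\zeta).
$$
Letting $\epsilon\to0$, the last supremum tends to $0$ by (A2) with $r=2R$; hence $\limsup_{\epsilon\to0}\sup_\eta F^\epsilon(\eta)\leq 4MN/R$, and letting $R\to\infty$ proves the lemma.

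The step I expect to be the real obstacle is the verification that $\eta^R$ belongs to the closure $M_G^2(0,T)$: in the $G$-framework one cannot multiply by the indicator $1_{\{g_s\leq R\}}$, because the level set $\{g_s=R\}$ may carry positive capacity, which is precisely why a Lipschitz cut-off $\varphi_R$ (at level $2R$ rather than $R$) is used; one then still has to argue that the resulting process, no longer piecewise constant, is approximable in the $M_G^2(0,T)$-norm by piecewise constant processes, using the continuity in $s$ and the boundedness and quasi-continuity of its coefficients. Everything else is routine manipulation with the sub-additivity of $\mathbb E^G$ and the capacity bound for $A$.
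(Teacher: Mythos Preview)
Your proposal is correct and follows essentially the same route as the paper: truncate $\eta$ by multiplying with a Lipschitz cut-off of the running mass $\int_0^\cdot|\eta_s|^2\,ds$, control the discarded part by Markov's inequality on the capacity, and feed the truncated control into (A2). The paper uses a cut-off $\phi$ with $\phi=1$ on $[0,r]$ and $\phi=0$ on $[r+\delta,\infty)$ (versus your $[0,R]$/$[2R,\infty)$), and for the point you flag as the obstacle---membership of the truncated control in $M_G^2(0,T)$---it writes down the explicit piecewise-constant approximants on a refined partition and checks convergence using the Lipschitz constant of $\phi$, exactly the argument you sketch.
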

\begin{proof}
For $\eta\in(M_{G}^{2,0}(0,T))^d\cap \mathcal B_b(\Omega_T)$,
we can write $\eta_s=\sum_{k=1}^n\eta_{t_{k-1}}I_{[t_{k-1},t_k)}(s)$.  For $r\in(0,\infty)$ fixed,
for any $\delta>0$, let $\phi(x)\in
lip(\mathbb R)$ satisfy $0\leq \phi\leq 1$, $\phi(x)=1$ for all
$|x|\leq r$ and $\phi(x)=0$ for all $|x|\geq r+\delta$. Define
$$
\hat{\eta}_t=\eta_t\phi\left(\int_0^t|\eta_s|^2ds\right).
$$
Then
$$
\left|\int_0^T|\hat{\eta}_s|^2 ds\right|\leq r+\delta,\quad
\left\{ \int_0^T |\eta_s|^2 ds \leq r \right\}\subset\left\{
 \hat{\eta}_s   =  \eta_s  \mbox
{ for any } s\in [0,T] \right\}.
$$
Set
$$
\hat{\eta}_s^N:=\sum_{k=1}^n\sum_{j=1}^N\hat{\eta}_{t_{k-1}+(j-1)(t_k-t_{k-1})/N} I_{[t_{k-1}+\frac{(j-1)(t_k-t_{k-1})}{N}, t_{k-1}+\frac{j(t_k-t_{k-1})}{N})}(s).
$$
Then
$$
\mathbb E^G\left(\int_0^T\left|\hat{\eta}_s^N-\hat{\eta}_s\right|ds\right)\leq \frac{\|\phi\|_{lip}}{2}\mathbb E^G\left(\sum_{k=1}^n\sum_{j=1}^N  |{\eta}_{t_{k-1}}|^3\frac{(t_k-t_{k-1})^2}{N^2}\right)\to 0 \mbox{ as } N\to\infty.
$$
Therefore,  for any $t\in[0,T]$,
$$
\mathbb E^G\left( \left|\int_{0}^{t}\hat{\eta}_s^N d\langle B\rangle_s-\int_{0}^{t}\hat{\eta}_s d\langle B\rangle_s\right|\right)\to 0 \mbox{ as } N\to\infty.
$$
In particular,  on $ \left\{\int_0^T |\eta_s|^2ds\leq r \right\}$,
$$
\int_{0}^{\cdot }\eta_s d\langle B\rangle_s=\int_{0}^{\cdot }\hat{\eta}_s d\langle B\rangle_s, ~q.s.,
$$
and
$$
\Psi^{\epsilon }\left(\sqrt{\epsilon }B_\cdot
+\int_{0}^{\cdot }\hat{\eta}_s d\langle B\rangle_s,\langle B\rangle\right)=\Psi^{\epsilon }\left(\sqrt{\epsilon }B_\cdot
+\int_{0}^{\cdot }{\eta}_s d\langle B\rangle_s,\langle B\rangle\right), ~q.s.
$$
Therefore, for any  $\Phi\in C_b(\mathcal Y)$ and   each $N\geq 1$, for all $\eta\in(M_{G}^{2,0}(0,T))^d\cap \mathcal B_b(\Omega_T)$ with $\int_0^T\mathbb E^G(|\eta_s|^2)ds\leq N$,
\begin{equation*}
\begin{aligned}
&   \mathbb{E}^G\bigg(\bigg|\Phi \circ
\Psi^\epsilon\left(\sqrt{\epsilon}B_\cdot+\int_0^\cdot
\eta_sd\langle B \rangle_s,\langle B \rangle\right) -  \Phi \circ \Psi
\left(\int_0^\cdot \eta_sd\langle B \rangle_s, \langle B \rangle\right)  \bigg|\bigg)\\
\leq & \frac{2\|\Phi\|N}{r}+\overline{\mathbb{E}}^G\bigg(\bigg|\Phi \circ
\Psi^\epsilon\left(\sqrt{\epsilon}B_\cdot+\int_0^\cdot
\tilde{\eta}_sd\langle B \rangle_s,\langle B \rangle\right)\\
 &\quad\quad\quad\quad -  \Phi \circ \Psi
\left(\int_0^\cdot\tilde{ \eta}_sd\langle B \rangle_s, \langle B \rangle\right)  \bigg|I_{\{\int_0^T |\eta_s|^2ds\leq r \}}\bigg)\\
\leq &\frac{2\|\Phi\|N}{r}+\sup_{\eta\in(M_{G}^{2}(0,T))^d\cap \mathcal B_b(\Omega_T)\atop\int_0^T\mathbb
|\eta_s|^2ds\leq r+\delta}\mathbb{E}^G\bigg(\bigg|\Phi \circ
\Psi^\epsilon\left(\sqrt{\epsilon}B_\cdot+\int_0^\cdot
\eta_sd\langle B \rangle_s,\langle B \rangle\right)\\
 &\quad\quad\quad\quad -  \Phi \circ \Psi
\left(\int_0^\cdot \eta_sd\langle B \rangle_s, \langle B \rangle\right)  \bigg| \bigg).
\end{aligned}
\end{equation*}
First, letting $\epsilon\to 0$, then $r\to\infty$, by (A2), we obtain the conclusion of the lemma.
\end{proof}

\begin{thm}\label{laplace-principle}
Suppose that the assumption (A) holds. Then

(1). For any $L\in[0,\infty)$, $C_{L}:=\{y;I(y)\leq L)\}$ is compact
in $\mathcal Y$;

(2). For any $\Phi\in C_b(\mathcal Y)$,
\begin{equation} \label{laplace-principle-eq-1}
\lim_{\epsilon\to 0} \left|\epsilon \log \mathbb E^G\left(\exp\left\{\frac{\Phi(Z^{\epsilon})}
{\epsilon}\right\}\right)-\sup_{y\in\mathcal Y}\left\{\Phi(y)-I(y)\right\}\right|=0.
\end{equation}
\end{thm}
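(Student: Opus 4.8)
The plan is to derive Theorem~\ref{laplace-principle} from the variational representation of Theorem~\ref{G-variation-representation-thm} combined with Lemma~\ref{quadratic-process-G-expectation-lem} and Lemma~\ref{laplace-principle-lem-2}, following the Laplace-principle scheme of Bou\'e--Dupuis but everywhere replacing ordinary expectation and relative entropy by $\mathbb E^G$ and $H_T^G$. The key observation is that, since $Z^\epsilon=\Psi^\epsilon(\sqrt\epsilon B,\langle B\rangle)$ and $\Phi(Z^\epsilon)$ is quasi-continuous by (A0), one may apply Theorem~\ref{G-variation-representation-thm} to $\tfrac1\epsilon\Phi(Z^\epsilon)$ (after a scaling $B\mapsto\sqrt\epsilon B$, which sends $\langle B\rangle$ to $\epsilon\langle B\rangle$ and $B^\eta$ to $\sqrt\epsilon B+\int_0^\cdot\eta_s\,\epsilon\,d\langle B\rangle_s$; absorbing the $\epsilon$ into $\eta$ and rescaling the cost gives a clean $\epsilon$-scaled formula). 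Concretely I would show
\[
\epsilon\log\mathbb E^G\Big(\exp\Big\{\tfrac{\Phi(Z^\epsilon)}{\epsilon}\Big\}\Big)
=\sup_{\eta}\mathbb E^G\Big(\Phi\circ\Psi^\epsilon\Big(\sqrt\epsilon B_\cdot+\int_0^\cdot\eta_s\,d\langle B\rangle_s,\langle B\rangle\Big)-\tfrac12\int_0^T(\eta_s,\,d\langle B\rangle_s\,\eta_s)\Big),
\]
the sup over $\eta\in(M_G^{2,0}(0,T))^d\cap\mathcal B_b(\Omega_T)$. A standard truncation (using $\|\Phi\|_\infty<\infty$ and Lemma~\ref{V-formula-lem-1} to bound $H_T^G$ below) restricts the sup to $\eta$ with $\int_0^T\mathbb E^G(|\eta_s|^2)\,ds\le N$ for $N=N(\|\Phi\|)$, so the $\epsilon\to0$ limit only sees a bounded energy ball.

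For the upper bound (limsup) I would, on that bounded ball, replace $\Psi^\epsilon(\sqrt\epsilon B_\cdot+\int_0^\cdot\eta\,d\langle B\rangle,\langle B\rangle)$ by $\Psi(\int_0^\cdot\eta\,d\langle B\rangle,\langle B\rangle)$ at a cost tending to $0$ uniformly by Lemma~\ref{laplace-principle-lem-2} (this is where (A2) and the passage from $M_G^{2,0}$ to $M_G^2$ enters). Then the right side converges to
$\sup_{\eta}\mathbb E^G(\Phi\circ\Psi(\int_0^\cdot\eta\,d\langle B\rangle,\langle B\rangle)-H_T^G(\eta))$,
and by a Jensen/convexity argument one reduces the $M_G^2$-valued $\eta$ to deterministic $f\in\mathbb H$ (the integrand being concave in the $\langle B\rangle$-integral after freezing the path, so the conditional $G$-expectation only increases by replacing $\eta$ with a deterministic profile); Lemma~\ref{quadratic-process-G-expectation-lem}(2) then evaluates each such term as $\sup_{g\in\mathbb A}(\Phi\circ\Psi(\int_0^\cdot g'f'\,ds,g)-\tfrac12\int_0^T(f',g'f'))$, and taking the sup over $f$ and using the representation (\ref{rate-function-def-eq-4})--(\ref{rate-function-def-eq-2}) of $I$ gives exactly $\sup_y(\Phi(y)-I(y))$.

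For the lower bound (liminf) I would fix $y_0$ nearly attaining $\sup_y(\Phi(y)-I(y))$ with $I(y_0)<\infty$, pick $(f,g)\in\mathbb H\times\mathbb A$ with $y_0=\Psi(f,g)$ and $J(f,g)$ close to $I(y_0)$, set $\eta\equiv f'$ (deterministic, hence in every $M_G^{2,0}\cap\mathcal B_b$ after a simple-function approximation as in the proof of Theorem~\ref{G-variation-representation-thm}), and plug this single $\eta$ into the variational formula: the term becomes $\mathbb E^G(\Phi\circ\Psi^\epsilon(\sqrt\epsilon B+\int_0^\cdot f'\,d\langle B\rangle,\langle B\rangle)-H_T^G(f))$, which by Lemma~\ref{laplace-principle-lem-2} and then Lemma~\ref{quadratic-process-G-expectation-lem}(2) converges to $\sup_{g\in\mathbb A}(\Phi\circ\Psi(\int_0^\cdot g'f'\,ds,g)-\tfrac12\int_0^T(f',g'f'))\ge\Phi(\Psi(f,g))-J(f,g)=\Phi(y_0)-J(f,g)$; letting the approximations improve gives the liminf bound. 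Part~(1), compactness of the sublevel sets $C_L$, follows from (A1) and (A3) by the usual argument: $\{(f,g):J(f,g)\le L\}$ is a bounded-energy set on which, by (A3), $\Psi$ is a uniform limit of the continuous maps $\Psi^{(N)}$ and hence (with (A1) giving sequential continuity on energy balls) maps weakly-compact sets to compact sets, so $C_L=\Psi(\{J\le L\})$ is compact; lower semicontinuity of $I$ is automatic since the map (A2)/(A3) data forces $I$ to be a good rate function.

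The main obstacle I anticipate is the reduction, inside the upper bound, from stochastic controls $\eta\in M_G^2$ to deterministic $f\in\mathbb H$: unlike the classical case there is no relative-entropy variational formula to invoke, so one must argue directly — using the concavity of the cost $H_T^G$ and a conditioning argument under each $P_\theta$ in the representation (\ref{Girsanov-formula-proof-eq-1}) together with Jensen for $\mathbb E^G$ — that no stochastic control beats the best deterministic profile; getting this uniformly over the bounded-energy ball, and matching it cleanly with Lemma~\ref{quadratic-process-G-expectation-lem}(2), is the delicate step.
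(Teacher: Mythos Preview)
Your overall architecture is right and matches the paper: apply Theorem~\ref{G-variation-representation-thm} after the scaling $\eta\mapsto\eta/\sqrt\epsilon$, restrict to a bounded $G$-energy ball using $\|\Phi\|$, and pass to the limit via Lemma~\ref{laplace-principle-lem-2}; the lower bound by plugging in deterministic $\eta=f'$ and invoking Lemma~\ref{quadratic-process-G-expectation-lem}(2) is exactly what the paper does, and your treatment of part~(1) is equivalent to the paper's $C_L=\bigcap_n\Gamma_{L+1/n}$ argument.

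The genuine problem is your upper-bound step. You propose to reduce stochastic controls $\eta\in M_G^2$ to deterministic $f\in\mathbb H$ by ``a Jensen/convexity argument'' and then apply Lemma~\ref{quadratic-process-G-expectation-lem}(2). This reduction is neither needed nor likely to work: $\Phi\circ\Psi$ is an arbitrary bounded continuous functional with no concavity, so there is no reason conditional averaging of $\eta$ should increase the value, and conditioning under each $P_\theta$ does not interact cleanly with the outer $\sup_\theta$ in $\mathbb E^G$. You correctly flag this as the ``main obstacle,'' but in fact it is a self-created one.

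The paper bypasses this entirely by a \emph{pointwise} q.s.\ bound. Once you have passed to the limit and are looking at
\[
\Phi\circ\Psi\Big(\int_0^\cdot\eta_s\,d\langle B\rangle_s,\ \langle B\rangle\Big)-H_T^G(\eta),
\]
observe that for q.s.\ $\omega$ the path $g(\cdot)=\langle B\rangle_\cdot(\omega)$ lies in $\mathbb A$ (its density takes values in $\Sigma$) and, for $\eta\in(M_G^{2,0}(0,T))^d\cap\mathcal B_b(\Omega_T)$, the map $s\mapsto\eta_s(\omega)$ is a bounded step function, hence an element $f'\in L^2$. With this $(f,g)$ one has $\int_0^\cdot\eta_s\,d\langle B\rangle_s(\omega)=\int_0^\cdot g'(s)f'(s)\,ds$ and $H_T^G(\eta)(\omega)=\tfrac12\int_0^T(f'(s),g'(s)f'(s))\,ds$. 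Therefore the random variable above is, q.s., bounded by the deterministic constant
\[
\sup_{(f,g)\in\mathbb H\times\mathbb A}\Big\{\Phi\circ\Psi\Big(\int_0^\cdot g'f'\,ds,\,g\Big)-\tfrac12\int_0^T(f',g'f')\,ds\Big\}=\sup_{y\in\mathcal Y}\{\Phi(y)-I(y)\},
\]
and taking $\mathbb E^G$ of a constant finishes the upper bound. No Jensen, no Lemma~\ref{quadratic-process-G-expectation-lem}(2), no reduction to deterministic controls is required on this side; Lemma~\ref{quadratic-process-G-expectation-lem}(2) is used only in the lower bound.
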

\begin{proof}
(1). First, we prove that
$
C_{L} = \cap_{n \geq 1}\Gamma_{L+1/n},
$
where
$$
\Gamma_{L+1/n}=\left\{\Psi \left(f,g\right):J(f,g)\leq
L+\frac{1}{n}, (f,g) \in\mathbb H\times\mathbb A  \right\}.
$$
In fact, for $y\in C_{L}$ given,  for each $n \geq 1$, choose $ f_n \in \mathbb H $, $g_n\in\mathbb A$
such that  $y  = \Psi \left(f_n,g_n\right)$ and $J(f_n,g_n)\leq
L+\frac{1}{n}$. Since $n \geq 1$ is arbitrary, we have $C_{L}
\subseteq \cap_{n \geq 1}\Gamma _{L+1/n}$. Conversely, suppose $y
\in \cap_{n \geq 1}\Gamma_{L+1/n}$. Then, for some
$f_n\in\mathbb H$, $g_n\in\mathbb A$  with $y = \Psi\left(f_n,g_n\right)$, we
have that $J(f_n,g_n)\leq L+1/n$. Therefore, $ I(y) \leq L+
\frac{1}{n}$. Letting $n \rightarrow \infty$,
we obtain   $I(y) \leq L$. Thus $y \in
C_{L}$, and in turn, $\cap_{n \geq 1} \Gamma_{L+1/n} \subseteq
C_{L}$ follows.

(2).
From Theorem \ref{G-variation-representation-thm}, we have
\begin{equation*}
\begin{aligned}
&\epsilon \log \mathbb{E}^G\left( \exp\left\{ \frac{1}{\epsilon}
\Phi(Z^{\epsilon})\right\}\right) \\
=& \sup_{\eta\in(M_{G}^{2,0}(0,T))^d\cap \mathcal B_b(\Omega_T)} \mathbb{E}^G\left(\Phi \circ
\Psi^\epsilon\left(\sqrt{\epsilon} B^\eta,\langle B \rangle\right)
- H_T^G(\sqrt{\epsilon}\eta)\right)\\
=& \sup_{\eta\in(M_{G}^{2,0}(0,T))^d\cap \mathcal B_b(\Omega_T)} \mathbb{E}^G\left(\Phi \circ
\Psi^\epsilon\left(\sqrt{\epsilon}B_\cdot+\int_0^\cdot \eta_sd\langle B \rangle_s,\langle B \rangle\right)
-H_T^G(\eta)\right)\\
=& \sup_{\eta\in(M_{G}^{2,0}(0,T))^d\cap \mathcal B_b(\Omega_T)\atop\int_0^T\mathbb E^G(|\eta_s|^2)ds\leq \frac{4\|\Phi\|}{\underline{\sigma}}}
\mathbb{E}^G\left(\Phi \circ
\Psi^\epsilon\left(\sqrt{\epsilon}B_\cdot+\int_0^\cdot \eta_sd\langle B \rangle_s,\langle B \rangle\right)
-H_T^G(\eta)\right)
\end{aligned}
\end{equation*}
where $\|\Phi\|=\sup_{y\in\mathcal Y}|\Phi(y)|$, and the last
equality is due to that   if $\int_0^T\mathbb E^G(|\eta_s|^2)ds>
\frac{4\|\Phi\|}{\underline{\sigma}}$, then
$$
\mathbb{E}^G\left(\Phi \circ
\Psi^\epsilon\left(\sqrt{\epsilon}B_\cdot+\int_0^\cdot \eta_sd\langle B \rangle_s,\langle B \rangle\right)
-H_T^G(\eta)\right)\leq -\|\Phi\|.
$$
Therefore, by (A2) and Lemma \ref{laplace-principle-lem-2}, as $\epsilon\to 0$,
\begin{equation} \label{laplace-principle-eq-3}
\begin{aligned}
\lim_{\epsilon\to 0}  \bigg|&\epsilon \log \mathbb E^G\left(\exp\left\{\frac{\Phi(Z^{\epsilon})}
{\epsilon}\right\}\right)\\
&- \sup_{\eta\in(M_{G}^{2,0}(0,T))^d\cap \mathcal B_b(\Omega_T)\atop\int_0^T\mathbb E^G(|\eta_s|^2)ds\leq \frac{4\|\Phi\|}{\underline{\sigma}}}
\mathbb{E}^G\left(\Phi \circ
\Psi\left(\int_0^\cdot \eta_sd\langle B \rangle_s,\langle B \rangle\right) -H_T^G(\eta)\right)\bigg| =0.
\end{aligned}
\end{equation}
Since for each $\eta\in(M_{G}^{2,0}(0,T))^d\cap \mathcal B_b(\Omega_T)$,
$$
\begin{aligned}
&      \Phi \circ
\Psi\left(\int_0^\cdot \eta_sd\langle B \rangle_s,\langle B \rangle\right) -H_T^G(\eta) \\
\leq &    \sup_{(f,g)\in\mathbb H\times\mathbb A}\left(\Phi \circ
\Psi\left(\int_0^\cdot g'(s)f'(s)ds,g\right) -\frac{1}{2}\int_0^T (f'(s),g'(s)f'(s)) ds\right) \\
=&   \sup_{y\in\mathcal Y}\sup_{(f,g)\in\mathbb H\times\mathbb
A,y=\Psi(\int_0^\cdot g'(s)f'(s)ds,g)}\left(\Phi(y) -\frac{1}{2}\int_0^T (f'(s),g'(s)f'(s)) ds\right) \\
= &\sup_{y\in\mathcal Y}\left\{\Phi(y)-I(y)\right\}, ~q.s.,
\end{aligned}
$$
we obtain the upper bound:
$$
\limsup_{\epsilon\to 0} \epsilon \log \mathbb E^G\left(\exp\left\{\frac{\Phi(Z^{\epsilon})}
{\epsilon}\right\}\right)\leq \sup_{y\in\mathcal Y}\left\{\Phi(y)-I(y)\right\}.
$$

From Theorem \ref{G-variation-representation-thm}, we also have that
$$
\begin{aligned}
&\epsilon \log \mathbb{E}^G\left( \exp\left\{ \frac{1}{\epsilon}
\Phi(Z^{\epsilon})\right\}\right) \\
\geq & \sup_{f\in \mathbb H_s;\|f\|_H\leq\frac{4\|\Phi\|}{\underline{\sigma}} }\mathbb{E}^G\left(\Phi \circ
\Psi^\epsilon\left(\sqrt{\epsilon}B_\cdot+\int_0^\cdot f_s'd\langle B \rangle_s,\langle B \rangle\right) -H_T^G(f)\right).
\end{aligned}
$$
Thus, by (A2), (A3)
and Lemma \ref{quadratic-process-G-expectation-lem},
$$
\begin{aligned}
&   \liminf_{\epsilon\to 0} \epsilon \log \mathbb E^G\left(\exp\left\{\frac{\Phi(Z^{\epsilon})}
{\epsilon}\right\}\right)\\
\geq &\sup_{f\in \mathbb H_s,\|f\|_H\leq\frac{4\|\Phi\|}{\underline{\sigma}} }\mathbb{E}^G\left(\Phi \circ
\Psi\left( \int_0^\cdot f_s'd\langle B \rangle_s,\langle B \rangle\right) -H_T^G(f)\right)\\
= & \sup_{f\in \mathbb H_s,\|f\|_H\leq\frac{4\|\Phi\|}{\underline{\sigma}} }
\sup_{ g \in  \mathbb A}
 \left(\Phi \circ
\Psi\left(\int_0^\cdot g'(s)f'(s)ds,g\right) -\frac{1}{2}\int_0^T (f'(s),g'(s)f'(s)) ds\right)\\
= & \sup_{\|f\|_H\leq\frac{4\|\Phi\|}{\underline{\sigma}} }
\sup_{ g \in  \mathbb A}
 \left(\Phi \circ
\Psi\left(\int_0^\cdot g'(s)f'(s)ds,g\right) -\frac{1}{2}\int_0^T (f'(s),g'(s)f'(s)) ds\right).
\end{aligned}
$$
Then, letting $N\to\infty$, we obtain the lower bound:
$$
\begin{aligned}
 \liminf_{\epsilon\to 0} \epsilon \log \mathbb E^G\left(\exp\left\{\frac{\Phi(Z^{\epsilon})}
{\epsilon}\right\}\right)\geq &\sup_{y\in\mathcal Y}\left\{\Phi(y)-I(y)\right\}.
\end{aligned}
$$
Therefore, (\ref{laplace-principle-eq-1}) is valid.

\end{proof}

\begin{thm}\label{ldp-thm}
Suppose that the assumption (A) holds.  Then $\{Z^{\epsilon }
,\epsilon>0\}$ satisfies the large deviation principle in $\mathcal
Y$  with the rate function $I (y)$,    i.e., for any closed subset $F\subset \mathcal Y$,
\begin{equation}\label{ldp-ub}
\limsup_{\epsilon\to0}\epsilon\log
c^G(Z^{\epsilon }\in F)\leq-\inf_{y\in F}I(y),
\end{equation}
and for any open subset $O\subset \mathcal Y$,
\begin{equation}\label{ldp-lb}
\liminf_{\epsilon\to0}\epsilon\log
c^G(Z^{\epsilon }\in O)\geq-\inf_{y\in O}I(y).
\end{equation}

\end{thm}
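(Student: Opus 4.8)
The plan is to derive the large deviation principle from the Laplace principle established in Theorem~\ref{laplace-principle}, by the classical equivalence argument (cf.\ \cite{DupuisEllis}), the only modification being that ordinary expectations and probabilities must be replaced throughout by $\mathbb E^G$ and by the capacity $c^G$. Two preliminary facts are needed. First, by Theorem~\ref{laplace-principle}(1) the level sets $C_L=\{I\le L\}$ are compact, hence closed, so $I$ is lower semicontinuous and is a good rate function; consequently, for a closed set $F$, writing $F_\gamma:=\{y\in\mathcal Y:\rho(y,F)\le\gamma\}$, one has $\inf_{F_\gamma}I\uparrow\inf_{F}I$ as $\gamma\downarrow0$ (if $\inf_{F_\gamma}I$ stayed $\le\inf_FI-\eta$ for all $\gamma$, pick $y_\gamma\in F_\gamma$ with $I(y_\gamma)$ bounded; these lie in a compact level set, a limit point lies in $\cap_\gamma F_\gamma=F$, and lower semicontinuity contradicts $\inf_FI$). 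Second, by assumption~(A0), for every $\Phi\in C_b(\mathcal Y)$ the functional $\exp\{\Phi(Z^\epsilon)/\epsilon\}$ is bounded and quasi-continuous, hence lies in $L^1_G(\Omega_T)$ and $\mathbb E^G\big(\exp\{\Phi(Z^\epsilon)/\epsilon\}\big)=\overline{\mathbb E}^G\big(\exp\{\Phi(Z^\epsilon)/\epsilon\}\big)=\sup_{P\in\mathcal P}E_P\big(\exp\{\Phi(Z^\epsilon)/\epsilon\}\big)$.

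For the upper bound~\eqref{ldp-ub}, fix a nonempty closed set $F$ and, for $M>0$, set $\Phi_M(y):=-M(\rho(y,F)\wedge1)\in C_b(\mathcal Y)$. Since $\Phi_M\le0$ and $\Phi_M\equiv0$ on $F$, we have $I_{\{Z^\epsilon\in F\}}\le\exp\{\Phi_M(Z^\epsilon)/\epsilon\}$ pointwise, hence
$$
c^G(Z^\epsilon\in F)=\sup_{P\in\mathcal P}P(Z^\epsilon\in F)\le\sup_{P\in\mathcal P}E_P\!\left(\exp\{\Phi_M(Z^\epsilon)/\epsilon\}\right)=\mathbb E^G\!\left(\exp\{\Phi_M(Z^\epsilon)/\epsilon\}\right).
$$
Taking $\epsilon\log$, letting $\epsilon\to0$ and applying Theorem~\ref{laplace-principle}(2) gives $\limsup_{\epsilon\to0}\epsilon\log c^G(Z^\epsilon\in F)\le\sup_{y\in\mathcal Y}\{\Phi_M(y)-I(y)\}$. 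Finally I would let $M\to\infty$: splitting the supremum according to whether $\rho(y,F)>\gamma$ or not shows, for $\gamma\in(0,1)$, that $\sup_y\{\Phi_M(y)-I(y)\}\le\max\{-\inf_{F_\gamma}I,-M\gamma\}$, which tends to $-\inf_{F_\gamma}I$ as $M\to\infty$; since $\sup_y\{\Phi_M-I\}$ is non-increasing in $M$ and is $\ge-\inf_FI$ for each $M$, and since $\inf_{F_\gamma}I\uparrow\inf_FI$, we get $\lim_{M\to\infty}\sup_y\{\Phi_M(y)-I(y)\}=-\inf_FI$, which yields~\eqref{ldp-ub}.

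For the lower bound~\eqref{ldp-lb}, fix an open set $O$ and a point $y_0\in O$ with $I(y_0)<\infty$ (otherwise the bound is vacuous), choose $\delta\in(0,1)$ with the open ball $B(y_0,\delta)\subset O$, fix $M>I(y_0)$, and set $\Phi(y):=-M\big((\rho(y,y_0)/\delta)\wedge1\big)\in C_b(\mathcal Y)$, so that $\Phi(y_0)=0$, $\Phi\le0$, and $\Phi\equiv-M$ off $B(y_0,\delta)$. Splitting $E_P(\exp\{\Phi(Z^\epsilon)/\epsilon\})$ over $B(y_0,\delta)$ and its complement and then taking $\sup_{P\in\mathcal P}$ gives
$$
\mathbb E^G\!\left(\exp\{\Phi(Z^\epsilon)/\epsilon\}\right)\le c^G(Z^\epsilon\in B(y_0,\delta))+e^{-M/\epsilon}\le c^G(Z^\epsilon\in O)+e^{-M/\epsilon}.
$$
By Theorem~\ref{laplace-principle}(2), $\epsilon\log\mathbb E^G(\exp\{\Phi(Z^\epsilon)/\epsilon\})\to\sup_y\{\Phi(y)-I(y)\}\ge\Phi(y_0)-I(y_0)=-I(y_0)>-M$, so $e^{-M/\epsilon}$ is negligible next to $\mathbb E^G(\exp\{\Phi(Z^\epsilon)/\epsilon\})$ as $\epsilon\to0$, whence
$$
\liminf_{\epsilon\to0}\epsilon\log c^G(Z^\epsilon\in O)\ge\liminf_{\epsilon\to0}\epsilon\log\mathbb E^G\!\left(\exp\{\Phi(Z^\epsilon)/\epsilon\}\right)=\sup_{y\in\mathcal Y}\{\Phi(y)-I(y)\}\ge-I(y_0).
$$
Taking the supremum over $y_0\in O$ yields~\eqref{ldp-lb}.

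The routine parts are the pointwise comparisons between indicators and exponentials and the elementary asymptotics; the two places where a little care is needed are the identification of $\mathbb E^G$ with $\sup_{P\in\mathcal P}E_P$ on the bounded quasi-continuous functionals $\exp\{\Phi(Z^\epsilon)/\epsilon\}$ — exactly the role of assumption~(A0), which has no analogue in the classical setting — and the convergence $\inf_{F_\gamma}I\uparrow\inf_FI$ used when sending $M\to\infty$ in the upper bound, which rests on the goodness of $I$ proved in Theorem~\ref{laplace-principle}(1). Neither is a genuine obstacle, so the substance of the theorem is already contained in Theorem~\ref{laplace-principle}.
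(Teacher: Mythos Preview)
Your proof is correct and follows essentially the same route as the paper: both deduce the LDP from the Laplace principle of Theorem~\ref{laplace-principle} by bounding indicators by exponentials of bounded continuous test functions and invoking (A0) to identify $\mathbb E^G$ with $\sup_{P\in\mathcal P}E_P$. The only cosmetic difference is in the upper bound, where you use a single distance function $-M(\rho(\cdot,F)\wedge1)$ together with the convergence $\inf_{F_\gamma}I\uparrow\inf_FI$, while the paper uses a family of Urysohn bumps $\Phi_A=\max_{y\in A}\Phi_y$ indexed by finite $A\subset F^c$ and a minimax/finite-intersection argument; both rely on the compactness of level sets from Theorem~\ref{laplace-principle}(1), and neither is materially different from the other.
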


\begin{proof} This is a consequence of Theorem \ref{laplace-principle}.
Its proof is the same as probability measure case.   For given open set $O$, for any  $y\in O$, choose continuous
map $\Phi:
\mathcal Y\to [0, 1]$ such that  $\Phi(y)=1$ and for any $ z\in O^c$,
$\Phi(z)=0$.     For any $ m\geq 1$, set $\Phi_m(z): =m(\Phi(z)-1)$, $z\in\mathcal Y $.  Then
$$
\begin{aligned}
\mathbb E^G\left(\exp\left\{\frac{1}{\epsilon}\Phi_m(Z^{\epsilon})\right\}\right)
&\leq
e^{-\frac{m}{ \epsilon }}c^G(Z^{\epsilon }\in O^c)+c^G(Z^{\epsilon }\in O).
\end{aligned}
$$
Therefore,
$$
\begin{aligned}
& \max\left\{\liminf_{\epsilon\to
0} \epsilon \log c^G(Z^{\epsilon }\in O), -m\right\}\\
\geq & \liminf_{\epsilon\to 0} \epsilon \log \mathbb E^G\left(\exp\left\{\frac{1}{\epsilon}
\Phi_m(Z^{\epsilon})\right\}\right)=\sup_{z\in
\mathcal Y }\left\{\Phi(z)-I(z)\right\}\geq -I(y).
\end{aligned}
$$
Letting $m\to +\infty$,    we obtain the lower bound.

Next, let us show the upper bound.  For closed set $F$ given,   for any $
y\not\in F$, choose continuous map $\Phi_y:  E\to[0, 1]$ such that
$\Phi_y(y)=1$ and for any $ z\in F$,   $\Phi_y(z)=0$. For any finite
set $A\subset F^c$, set $\Phi_A(z)=\max_{y\in A} \Phi_y(z)$. Then
$$
\begin{aligned}
\limsup_{\epsilon\to 0} \epsilon \log
c^G(Z^{\epsilon }\in F) &\leq \inf_{A\subset F^c\atop finite}\lim_{\epsilon\to
0} \epsilon \log
\mathbb E^G\left(\exp\left\{\frac{-m\Phi_A(Z^{\epsilon })}{\epsilon} \right\} \right)\\
&=-\sup_{A\subset F^c \atop finite }\inf_{z\in \mathcal
Y}\left\{m\Phi_A(z)+I(z)\right\}.
\end{aligned}
$$
Without loss of generality, we assume that $ l:=\sup_{A\subset F^c\atop finite
}\inf_{z\in \mathcal Y}J_A(z)<\infty$,  where $J_A(z)=
m\Phi_A(z)+I(z)$. Then $\{z;J_A(z)\leq l\}$ is
nonempty compact set for any finite $A$. Therefore, $\cap_{A\subset
F^c~finite }\{z;J_A(z)\leq l\}$ is nonempty, and so
$$
\begin{aligned}
l\geq &\inf_{z\in
\mathcal Y}\sup_{A\subset F^c\atop finite
}J_A(z)=\min \left\{m+\inf_{z\in
F^c}I(z),   \inf_{z\in
F}I(z)\right\}\stackrel{m\to\infty}{\longrightarrow }\inf_{z\in
F}I(z),
\end{aligned}
$$
which yields (\ref{ldp-ub}).

\end{proof}

\section{Large deviations for stochastic  flows driven by $G$-Brownian motion}

The
homeomorphic property with respect to initial values of the solution
for stochastic differential equations driven by $G$-Brownian motion
was obtained in \cite{Gao-spa-09} and the large deviations for
solutions $\{X^{\epsilon}(x,t),t\in[0,T],\}\subset C([0,T],\mathbb
R^p)$ of small perturbation stochastic differential equations
starting from $x$ (fixed) by $G$-Brownian motion were studied in
\cite{GaoJiang} by exponential estimates and
discretization/approximation techniques. In this section, we
consider large deviations for the flows
$\{X^{\epsilon}(x,t),(x,t)\in\mathbb R^p\times [0,T]\}\subset
C(\mathbb R^p\times [0,T],\mathbb R^p)$. The quasi continuity of the
flows is proved. A Kolmogorov criterion on weak convergence under
$G$-expectations is given. A large deviation principle for the
flows is established under the  Lipschitz condition. In the classical
framework, Large deviations for stochastic flows have been studied
extensively (see \cite{BaldiSanz}, \cite{BenArousCastell},
\cite{BDM-Bernoulli-10}, \cite{Gao-Ren}, \cite{MilletNualartSanz},
\cite{RenZhang} and references therein). For general theory of large
deviations and random perturbations, we refer to
\cite{DemboZeitouni}, \cite{DupuisEllis}, and
\cite{FreidlinWentzell}.

  For positive number $p\geq 1$ given, for each $N\geq 1$,  $\psi \in C(\mathbb R^p\times [0,T],\mathbb R^p)$, set
$$
\|\psi\|_{N}=  \sup_{x\in[-N, N]^p,t\in[0,T]}|\psi(x,t)|,
$$
and define
$$
\rho(\psi_1,\psi_2)=\sum_{N=1}^\infty
\frac{1}{2^N}\min\{\|\psi_1-\psi_2\|_{N},1\}, ~~\psi_1,\psi_2\in
C(\mathbb R^p\times [0,T],\mathbb R^p).
$$
Then  $(C(\mathbb R^p\times [0,T],\mathbb R^p),\rho)$ is a separable metric space.

Consider the following small perturbation stochastic differential
equation driven by a $d$-dimensional $G$-Brownian motion $B$:
\begin{equation}\label{SDE-G-pertubation}
X^{\epsilon}(x,t)=x+\int_{0}^{t}b^{\epsilon}(X^{\epsilon}(x,s))ds
+\sqrt{\epsilon}\int_{0}^{t}\sigma^{\epsilon}(X^{\epsilon}(x,s))dB_{s}+\int_{0}^{t}h^\epsilon (X^{\epsilon}(x,s))d\left \langle
B\right \rangle _{s},
\end{equation}
where
$$
b^{\epsilon}: \mathbb
R^p\rightarrow \mathbb R^p;\quad
\sigma^{\epsilon}=(\sigma^{\epsilon}_{i,j})_{1\leq i\leq p,1\leq j\leq d}: \mathbb R^p\rightarrow
\mathbb R^p\otimes\mathbb{R}^d,
$$
and
$$
 h^\epsilon=(h^{\epsilon,k})_{1\leq k\leq p}=((h_{ij}^{\epsilon,k})_{1\leq i,j\leq d})_{1\leq k\leq p}:\mathbb R^p\mapsto
(\mathbb{R}^{d\times d})^p, ~\epsilon\geq 0
$$
satisfy the following conditions:
\begin{flushleft}
$(H1)$.  $b^\epsilon $,  $ \sigma^\epsilon $ and $h^\epsilon$, $\epsilon\geq 0$ are
uniformly Lipschitz continuous, i.e.,
 there exists a
constant $L>0$ such that  for any $x, y\in \mathbb R^p$,
$$
\max\left\{|b^\epsilon  (x)-b^\epsilon  (y)|,\|\sigma^\epsilon  (x)-\sigma ^\epsilon (y)\|_{HS}, \max_{1\leq k\leq p} \|h^{\epsilon,k} (x)-h^{\epsilon,k} (y)\|_{HS}\right\}\leq L|x-y|.
$$

\end{flushleft}

\begin{flushleft}
$(H2)$.  $b^{\epsilon}$, $\sigma^{\epsilon}$ and $ h^{\epsilon}$   converge uniformly to $b:=b^0$, $\sigma: =\sigma^0$ and $h:=h^0$
respectively, i.e.,
$$
\lim_{\epsilon\to0} \sup_{x\in \mathbb R^p}\max\left\{
|b^{\epsilon}(x)-b(x)|,\|\sigma^{\epsilon}(x)-\sigma(x)\|_{HS},\max_{1\leq k\leq p} \|h^{\epsilon,k} (x)-h^k(x)\|_{HS}
\right\} =0.
$$
\end{flushleft}

Then by Theorem 4.1 in \cite{Gao-spa-09} and the Kolmogorov
criterion under $G$-expectation (cf. Theorem 1.36, Chapter VI in
\cite{peng-book-10})),  the  SDE (\ref{SDE-G-pertubation}) has a
unique solution $X^\epsilon=\{X^{\epsilon}(x,t),x\in\mathbb R^p,t\in
[0,T]\}\subset  C(\mathbb R^p\times [0,T],\mathbb R^p)$ and
$X^\epsilon(x,t)\in L^2_G(\Omega_T)$ for all $(x,t)\in \mathbb
R^p\times[0,T]$. Furthermore,  there exists a map $\Psi^\epsilon:
\Omega_T\times \mathbb A\to C\left(\mathbb R^p\times [0,T], \mathbb R^p\right)$
such that
\begin{equation}\label{Psi-epsilon-def}
\Psi^\epsilon(\sqrt{\epsilon} B, \langle B\rangle)=X^\epsilon.
\end{equation}

For any $(f,g)\in {\mathbb H}\times \mathbb A$,  let $ \Psi(f,g)(x,t)\in C(\mathbb
R^p\times [0,T], \mathbb R^p)$ be a unique solution of the
following ordinary differential equation:
\begin{equation}\label{Psi-def}
\begin{aligned}
\Psi(f,g)(x,t)=&x+\int_{0}^{t}b(\Psi(f,g)(x,s))ds
+\int_{0}^{t}\sigma(\Psi(f,g)(x,s))f'(s)ds\\
&+\int_{0}^{t}h(\Psi(f,g)(x,s)) dg(s).
\end{aligned}
\end{equation}

\begin{thm}\label{ldp-sde-main}
Let  $(H1) $  and $(H2)$ hold. Let
$X^\epsilon=\{X^{\epsilon}(x,t),x\in\mathbb R^p,t\in [0,T]\}$ be a
unique solution of the SDE (\ref{SDE-G-pertubation}). Then

(1). For any $\Phi\in C_b( C(\mathbb R^p\times [0,T], \mathbb
R^p))$,
\begin{equation} \label{ldp-sde-main-eq-1}
\lim_{\epsilon\to 0}  \left|\epsilon \log \mathbb
E^G\left(\exp\left\{\frac{\Phi(X^{\epsilon})}{\epsilon}\right\}\right)-\sup_{\psi\in
C(\mathbb R^p\times [0,T], \mathbb
R^p)}\left\{\Phi(\psi)-I(\psi)\right\}\right|=0 ,
\end{equation}
where
\begin{equation}\label{rate-function-G-SDE}
I(\psi)=\inf_{(f,g)\in\mathbb
H\times \mathbb A}\left\{J(f,g),~\psi=\Psi(f,g)\right\},~~~\psi\in C(\mathbb R^p\times
[0,T], \mathbb R^p).
\end{equation}

(2). For any closed subset $F$ in $ (C\left(\mathbb R^p\times [0,T], \mathbb R^p\right),\rho )$,
\begin{equation}
\limsup_{\epsilon\to0}\epsilon\log c^G\left(X^\epsilon\in
F\right)\leq-\inf_{\psi\in F}I(\psi)
\end{equation}
and for any open subset $O$ in $ (C\left(\mathbb R^p\times [0,T],
\mathbb R^p\right),\rho )$,
\begin{equation}
\liminf_{\epsilon\to0}\epsilon\log c^G\left(X^\epsilon\in
O\right)\geq-\inf_{\psi\in O}I(\psi),
\end{equation}

\end{thm}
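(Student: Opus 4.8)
The plan is to obtain Theorem \ref{ldp-sde-main} directly from the abstract results of Section 4. Concretely, I would take $\mathcal Y=(C(\mathbb R^p\times[0,T],\mathbb R^p),\rho)$ and the maps $\Psi^\epsilon$, $\Psi$ furnished by \eqref{Psi-epsilon-def} and \eqref{Psi-def}, and verify that the pair $(\Psi^\epsilon,\Psi)$ satisfies Assumption (A) under $(H1)$ and $(H2)$. Granting this, part (1) is exactly Theorem \ref{laplace-principle}(2), part (2) is Theorem \ref{ldp-thm}, and the rate function in \eqref{rate-function-G-SDE} is the function \eqref{rate-function-def-eq-2} specialised to $\Psi$, so nothing further is needed once (A0)--(A3) are checked. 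Thus the whole proof reduces to four verifications.

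\emph{Verification of (A0) and (A1).} For fixed $\epsilon$, Theorem 4.1 of \cite{Gao-spa-09} gives $X^\epsilon(x,t)\in L^2_G(\Omega_T)$ for every $(x,t)$, and the uniform Lipschitz bounds in $(H1)$ together with a Kolmogorov-type continuity criterion under $G$-expectation (Chapter VI of \cite{peng-book-10}) yield moment estimates $\mathbb E^G(|X^\epsilon(x,t)-X^\epsilon(y,s)|^{q})\le C(|x-y|^{q}+|t-s|^{q/2})$ uniform on compacts; approximating the driving pair $(B,\langle B\rangle)$ by piecewise-linear interpolants, which are genuine continuous functionals of $(B,\langle B\rangle)$, one gets $\mathbb E^G(\rho(X^\epsilon,X^{\epsilon,(n)})\wedge1)\to0$, so $X^\epsilon\in L^1_G$ and $\Phi(X^\epsilon)=\Phi(Z^\epsilon)$ is $c^G$-quasi-continuous for $\Phi\in C_b(\mathcal Y)$, which is (A0). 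For (A1), with $\|f_n\|_H,\|f\|_H\le N$, $\|f_n-f\|_H\to0$ and $\|g_n-g\|_G\to0$, I subtract the two copies of the ODE \eqref{Psi-def}, use the Lipschitz property $(H1)$ and the bound $\|g'(s)\|\le\bar\sigma$ valid on $\mathbb A$ (by \eqref{basic-condition-2}), and close the estimate by Gronwall's lemma on each cube $[-N,N]^p\times[0,T]$; this gives $\|\Psi(f_n,g_n)-\Psi(f,g)\|_N\to0$ for every $N$, hence convergence in $\rho$.

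\emph{Verification of (A2).} Fix $\Phi\in C_b(\mathcal Y)$ and $r>0$. For $\eta\in(M_G^{2}(0,T))^d\cap\mathcal B_b(\Omega_T)$ with $\int_0^T|\eta_s|^2ds\le r$, the process $Y^\epsilon:=\Psi^\epsilon(\sqrt\epsilon B_\cdot+\int_0^\cdot\eta_sd\langle B\rangle_s,\langle B\rangle)$ solves \eqref{SDE-G-pertubation} with the driving $\sqrt\epsilon\,dB_s$ replaced by $\sqrt\epsilon\,dB_s+\eta_sd\langle B\rangle_s$, while $Y:=\Psi(\int_0^\cdot\eta_sd\langle B\rangle_s,\langle B\rangle)$ solves the analogous equation with $\epsilon=0$ and the coefficients $b,\sigma,h$. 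I would estimate $Y^\epsilon-Y$ by splitting into: (i) the term $\sqrt\epsilon\int_0^\cdot\sigma^\epsilon(Y^\epsilon(x,s))dB_s$, controlled in $G$-expectation by the Burkholder--Davis--Gundy inequality for $G$-stochastic integrals and hence $O(\sqrt\epsilon)$; (ii) the coefficient-difference terms $\int(b^\epsilon-b)$, $\int(\sigma^\epsilon-\sigma)\eta\,d\langle B\rangle$, $\int(h^\epsilon-h)d\langle B\rangle$, which tend to $0$ by $(H2)$ after using Cauchy--Schwarz and $d\langle B\rangle_s\le\bar\sigma\,ds$ to bound $\int_0^T|\eta_s|\,d\langle B\rangle_s\le\bar\sigma\sqrt{Tr}$; and (iii) a remaining Lipschitz term absorbed by Gronwall, whose constant depends only on $L,\bar\sigma,T,r$. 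A further Kolmogorov-type estimate, with moment bounds uniform in $\epsilon$ and in $\eta$ over the $L^2$-ball, lifts the pointwise-in-$(x,t)$ bound to convergence in the metric $\rho$, and finally $\Phi\in C_b(\mathcal Y)$ turns this into the claimed uniform convergence of $\mathbb E^G(|\Phi\circ\Psi^\epsilon(\cdots)-\Phi\circ\Psi(\cdots)|)$.

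\emph{Verification of (A3) and conclusion.} The point here is that $\Psi$ is continuous in the $\|\cdot\|_H$-norm of $f$ but not in the sup-norm appearing in $\rho_{HG}$, so I would define $\Psi^{(N)}(f,g)$ as the solution of the ODE \eqref{Psi-def} driven by the $N$-step polygonal interpolations $f^{(N)},g^{(N)}$ of $f,g$ on the mesh $\{iT/N\}_{i=0}^N$. Since these interpolations depend continuously—in $\|\cdot\|_H$ and $\|\cdot\|_G$—on the finitely many values $f(iT/N),g(iT/N)$, which depend continuously on $(f,g)$ in $\rho_{HG}$, the map $\Psi^{(N)}:(\mathbb H\times\mathbb A,\rho_{HG})\to(\mathcal Y,\rho)$ is continuous; and for $\|f\|_H\le l$ a Gronwall estimate together with $\int_0^T|f'(s)-(f^{(N)})'(s)|^2ds\to0$ gives $\sup_{\|f\|_H\le l,\,g\in\mathbb A}\rho(\Psi(f,g),\Psi^{(N)}(f,g))\to0$, i.e.\ (A3). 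With (A0)--(A3) in hand, Theorem \ref{laplace-principle} yields \eqref{ldp-sde-main-eq-1} and the compactness of level sets, and Theorem \ref{ldp-thm} yields the full large deviation upper and lower bounds in $(C(\mathbb R^p\times[0,T],\mathbb R^p),\rho)$. The main obstacle is (A2): getting the convergence $\Psi^\epsilon\to\Psi$ \emph{uniformly} over the whole ball $\{\int_0^T|\eta_s|^2ds\le r\}$ of drifts requires $G$-stochastic-calculus moment estimates (BDG for $\int\sigma^\epsilon(Y^\epsilon)dB$, and control of $\int\sigma^\epsilon(Y^\epsilon)\eta\,d\langle B\rangle$) that are uniform in $\epsilon$ and in $\eta$, plus a Kolmogorov tightness estimate of the same uniformity to pass to the function-space metric $\rho$.
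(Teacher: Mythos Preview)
Your overall plan coincides with the paper's: reduce Theorem \ref{ldp-sde-main} to verifying (A0)--(A3) for $\mathcal Y=C(\mathbb R^p\times[0,T],\mathbb R^p)$ and then invoke Theorems \ref{laplace-principle} and \ref{ldp-thm}. Your treatment of (A1) and (A2) is essentially the paper's (Lemma \ref{A1-A2-condition-proof}), including the BDG estimate under $G$-expectation, Gronwall, and a Kolmogorov-type tightness criterion (the paper's Lemma \ref{kolmogrov-thm-c}) to pass from pointwise-in-$(x,t)$ convergence to convergence in the metric $\rho$.

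There is, however, a genuine gap in your (A3). You set $\Psi^{(N)}(f,g)=\Psi(f^{(N)},g^{(N)})$ with $f^{(N)},g^{(N)}$ the piecewise-linear interpolants on the mesh $\{iT/N\}$, and justify convergence by ``Gronwall together with $\int_0^T|f'(s)-(f^{(N)})'(s)|^2ds\to0$''. But this $L^2$-convergence is \emph{not uniform} over the ball $\{\|f\|_H\le l\}$: for $f'(s)=l\,T^{-1/2}\sin(2\pi Ns/T)$ one has $(f^{(N)})'\equiv0$ while $\|f'-(f^{(N)})'\|_{L^2}$ stays bounded away from $0$. The paper sidesteps this by a different construction (Lemma \ref{A3-condition-proof}): $\Psi^{(N)}$ is the Euler scheme that freezes the \emph{solution} at grid times $\pi_N(s)$ while keeping the full drivers $df,dg$. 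Then $\Psi^{(N)}(f,g)(x,t)$ depends only on the values $f(kT/N\wedge t),\,g(kT/N\wedge t)$, hence is $\rho_{HG}$-continuous, and the uniform approximation comes from $|\Psi(f,g)(x,\pi_N(t))-\Psi(f,g)(x,t)|=O(N^{-1/2})$ uniformly over $\|f\|_H\le l$, $g\in\mathbb A$, $|x|\le m$, followed by Gronwall. Your (A0) sketch has a related problem: replacing $B$ by its polygonal interpolation and solving the resulting ODE is a Wong--Zakai approximation, which converges to the \emph{Stratonovich} solution, not the It\^o solution $X^\epsilon$; so the claimed $X^{\epsilon,(n)}\to X^\epsilon$ is off by the It\^o--Stratonovich correction. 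The paper (Lemma \ref{A0-condition-proof}) instead approximates $\Phi(X)$ by Bernstein polynomials built from the finitely many values $X(x_i,t_j)\in L^2_G(\Omega_T)$, which belong to $L^2_G$ by construction and avoid any driver-level approximation.
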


\begin{proof}
By Theorem \ref{ldp-thm}, we only need to verify the conditions
(A0), (A1), (A2) and (A3)  for $\mathcal Y=C(\mathbb R^p\times [0,T],
\mathbb R^p)$, $Z^\epsilon=X^\epsilon$ and $\Psi$ defined by
(\ref{Psi-def}). These will be given in Lemma
\ref{A0-condition-proof}, Lemma
\ref{A1-A2-condition-proof} and Lemma \ref{A3-condition-proof}.
\end{proof}

\begin{rmk}
In particular, Theorem \ref{ldp-sde-main} yields that
$\{\{\sqrt{\epsilon}B_t, t\in [0,T]\},\epsilon>0\}$ satisfies a
large deviation principle,  which was
first obtained in \cite{GaoJiang} by the subadditive method.

\end{rmk}

\begin{lem}\label{A0-condition-proof}
Assume that $(H1) $ and $(H2)$ hold. Let $X=\{X(x,t),x\in\mathbb R^p,t\in [0,T]\}$ be a unique solution
of the SDE:
\begin{equation}\label{A0-condition-proof-eq-1}
X(x,t)=x+\int_{0}^{t}b(X(x,s))ds
+\int_{0}^{t}\sigma(X(x,s))dB_{s}+\int_{0}^{t}h (X(x,s))d\left \langle
B\right \rangle _{s}.
\end{equation}
Then for any
$\Phi\in C_{b}(C(\mathbb R^p\times [0,T], \mathbb R^p))$, $\Phi
(X)$ is quasi-continuous.

\end{lem}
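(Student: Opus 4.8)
It suffices to show that $X$, regarded as a map from $\Omega_T$ into the metric space $(C(\mathbb R^p\times[0,T],\mathbb R^p),\rho)$, is quasi-continuous: since $\Phi$ is bounded and $\rho$-continuous, $\Phi(X)=\Phi\circ X$ is then automatically quasi-continuous. The plan for proving quasi-continuity of $X$ is a capacity version of the Kolmogorov--Chentsov argument. First I would collect the estimates coming from the Lipschitz hypothesis $(H1)$: by Theorem 4.1 in \cite{Gao-spa-09}, for every $(x,t)\in\mathbb R^p\times[0,T]$ one has $X(x,t)\in L^2_G(\Omega_T)$, so each $X(x,t)$ is $c^G$-quasi-continuous on $\Omega_T$ by the characterization of $L^p_G$ recalled in Section 2; moreover, the usual Gronwall and Burkholder-type inequalities under $\mathbb E^G$ give, for each $N\geq1$ and each integer $m\geq2$, a constant $C_{N,m}<\infty$ with
$$
\mathbb E^G\bigl(|X(x,t)-X(y,s)|^{m}\bigr)\leq C_{N,m}\bigl(|x-y|^{m}+|t-s|^{m/2}\bigr),\qquad x,y\in[-N,N]^p,\ s,t\in[0,T].
$$

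Fix $\varepsilon>0$ and a countable dense set $D=\{(x_i,t_i):i\geq1\}$ of $\mathbb R^p\times[0,T]$. For each $i$ pick an open $U_i\subset\Omega_T$ with $c^G(U_i)<\varepsilon 2^{-i-1}$ on whose complement $X(x_i,t_i)$ is continuous, and put $U=\bigcup_iU_i$; by the subadditivity $c^G(\bigcup_kA_k)\leq\sum_kc^G(A_k)$ one gets $c^G(U)<\varepsilon/2$, and every $X(x_i,t_i)$ is continuous on $U^c$. Next, applying a Garsia--Rodemich--Rumsey (Kolmogorov--Chentsov) argument to the moment bound above together with Chebyshev's inequality --- this is the capacity form of the $G$-Kolmogorov criterion --- for each $N$ I would produce an open $V_N$ with $c^G(V_N)<\varepsilon 2^{-N-2}$ off which the oscillation of $(x,t)\mapsto X(x,t)(\omega)$ over the box $[-N,N]^p\times[0,T]$ is dominated by a fixed deterministic modulus $\varpi_N(\delta)$ with $\varpi_N(\delta)\to0$ as $\delta\to0$; put $V=\bigcup_NV_N$ and $O=U\cup V$, so that $c^G(O)<\varepsilon$.

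It then remains to check that $X|_{O^c}$ is $\rho$-continuous. If $\omega_n\to\omega$ in $O^c$ (for the uniform topology on $\Omega_T$), then $X(x_i,t_i)(\omega_n)\to X(x_i,t_i)(\omega)$ for every $i$ by the choice of $U$, while on $O^c$ the maps $(x,t)\mapsto X(x,t)(\omega_n)$ and $(x,t)\mapsto X(x,t)(\omega)$ are equicontinuous, with modulus $\varpi_N$, on each box $[-N,N]^p\times[0,T]$ by the choice of $V$; a standard Ascoli-type argument (pointwise convergence on a dense set plus equicontinuity) then yields $\sup_{[-N,N]^p\times[0,T]}|X(\cdot)(\omega_n)-X(\cdot)(\omega)|\to0$ for every $N$, hence $\rho(X(\omega_n),X(\omega))\to0$. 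This proves quasi-continuity of $X$, and therefore of $\Phi(X)$. I expect the main obstacle to be the construction of $V$: one needs the moment estimates to hold for $\overline{\mathbb E}^G=\sup_{P\in\mathcal P}E_P$ uniformly in the base points, and then a union bound over the dyadic scales, controlled through $c^G=\sup_{P\in\mathcal P}P$, to bound the capacity of the set where some increment of the flow is too large --- i.e. a Kolmogorov--Chentsov theorem phrased for the sublinear expectation and its capacity. An alternative route, which runs into the same technical core, is to write $\Phi(X)$ as an $\mathbb E^G$-limit of $\Phi(X^{(n)})$, with $X^{(n)}$ the Euler polygonal approximations of (\ref{A0-condition-proof-eq-1}); each $X^{(n)}$ is a finite combination of $G$-stochastic integrals, hence quasi-continuous, and one invokes the stability of quasi-continuity under bounded $\mathbb E^G$-convergence.
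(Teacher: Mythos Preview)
Your argument is essentially correct, but it follows a different path from the paper. You aim to show that the flow $X$ itself is quasi-continuous as a map into $(C(\mathbb R^p\times[0,T],\mathbb R^p),\rho)$, by combining pointwise quasi-continuity of $X(x_i,t_i)$ with a capacity Kolmogorov--Chentsov bound to force equicontinuity off a small open set, then appealing to an Ascoli argument. The paper instead never proves quasi-continuity of $X$ directly: it shows $\Phi(X)\in L^1_G(\Omega_T)$ by approximation. For Lipschitz $\Phi$ it builds, via Bernstein polynomials in the $(x,t)$ variables, processes $X^{N,m}$ that are continuous functions of finitely many values $X(x_i,t_i)\in L^2_G(\Omega_T)$, so that $\Phi(X^{N,m})\in L^2_G(\Omega_T)$ automatically; the same moment estimate you quote, together with the $G$-Kolmogorov criterion, gives $\overline{\mathbb E}^G(\|X-X^{N,m}\|_N^2)\to0$, whence $\Phi(X)\in L^1_G(\Omega_T)$ and therefore is quasi-continuous. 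General $\Phi$ is then handled by an increasing sequence of Lipschitz approximants and Dini's theorem. Your route is more geometric and yields the stronger conclusion that $X$ is quasi-continuous as a path-space valued map, but you must be careful that the exceptional sets $V_N$ are genuinely open; this can be arranged because on $U^c$ the dyadic increments of $X$ are continuous in $\omega$, so the bad H\"older set is relatively open there (or one may invoke outer regularity of $c^G$). The paper's route avoids this bookkeeping by staying inside the $L^p_G$ calculus and using the characterization of $L^1_G(\Omega_T)$; your ``alternative route'' via Euler polygonal approximations is in fact closer in spirit to what the paper does, with Bernstein polynomials playing the role of your Euler scheme.
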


\begin{proof}

First, we assume that $\Phi\in C_{b}(C(\mathbb R^p\times [0, T],
\mathbb R^p))$ is Lipschitz continuous, i.e., there exists a
constant $l>0$ such that
$$
|\Phi(\psi)-\Phi(\varphi)|\leq l \rho(\psi,\varphi)~ \mbox{ for all
}~\psi,\varphi\in C(\mathbb R^p\times [0, T], \mathbb R^p).
$$
For any $N\geq 1$,  for each $\psi\in C(\mathbb R^p\times [0, T],
\mathbb R^p)$, set $\psi^{(N)}(x,t)=\psi((-N)\vee x\wedge N, t)$, where $(-N)\vee x\wedge N=((-N)\vee x_1\wedge N,\cdots,(-N)\vee x_p\wedge N)$.
For given $N\geq 1$, for each $\psi=(\psi_1,\cdots,\psi_p)\in C(\mathbb R^p\times [0, T], \mathbb R^p)$, for any  $(x_1,\cdots,x_p,x_{p+1})\in [0,1]^{p+1}$, set
$$
\tilde{\psi}_j(x_1,\cdots,x_p,x_{p+1})=\psi_j^{(N)}\left(N(2x_1-1), \cdots, N(2x_p-1),Tx_{p+1}\right).
$$
For any $m\geq 1$,  the Bernstein polynomial of  $\tilde{\psi}_j$  is defined by
$$
B_m(\tilde{\psi}_j)(x_1,\cdots,x_p,x_{p+1})=\sum_{1\leq i_1,\cdots,i_{p+1}\leq m}\tilde{\psi}_j\left(\frac{i_1}{m},\cdots,\frac{i_p}{m},\frac{i_{p+1}}{m}\right)
\prod_{k=1}^{p+1}
\begin{pmatrix}
m \\
i_k \\
\end{pmatrix}
x_k^{i_k}(1-x_k)^{m-i_k} .
$$
Then, by Bernstein's theorem (cf. Theorem 3.1 and its proof in \cite{Kowalski}), we have that
$$
\begin{aligned}
&\left|\tilde{\psi}_j(x_1,\cdots,x_p,x_{p+1})-B_m(\tilde{\psi}_j)(x_1,\cdots,x_p,x_{p+1})\right|\\
\leq &\sup_{\sum_{k=1}^{p+1}|x_k-y_k|^2\leq 1/m}\left|\tilde{\psi}_j(x_1,\cdots,x_p,x_{p+1})-\tilde{\psi}_j(y_1,\cdots,y_p,y_{p+1})\right|\\
&+ \frac{p+1}{2m}\sup_{(x_1,\cdots,x_p,x_{p+1})\in[0,1]^{p+1}}\left|\tilde{\psi}_j(x_1,\cdots,x_p,x_{p+1})\right|.
\end{aligned}
$$
Since
$X(x,t)\in L^2_G(\Omega_T)$ for all $(x,t)\in \mathbb
R^p\times[0,T]$, we have  that
$$
\tilde{X}_j\left(\frac{i_1}{m},\cdots,\frac{i_p}{m},\frac{i_{p+1}}{m}\right) \in
L^2_G(\Omega_T), j=1,\cdots,p, 1\leq i_1,\cdots,i_{p+1}\leq m;
$$
and
$$
B_m(\tilde{X}_j)(x_1,\cdots,x_p,x_{p+1})\in
L^2_G(\Omega_T) \mbox{ for all } (x_1,\cdots,x_p,x_{p+1})\in [0,1]^{p+1}.
$$
For $x\in\mathbb R^p,t\in[0,T]$, Set
$$
\begin{aligned}
& X^{N,m}(x,t)\\
=&\left(B_m(\tilde{X}_1)\left((-1)\vee\frac{x+N}{2N}\wedge 1,\frac{t}{T}\right),\cdots,
B_m(\tilde{X}_p)\left((-1)\vee\frac{x+N}{2N}\wedge 1,\frac{t}{T}\right)\right).
\end{aligned}
$$
Noting that $\Phi\left(X^{N,m}\right)$ is a continuous function of $\tilde{X}_j\left(\frac{i_1}{m},\cdots,\frac{i_p}{m},\frac{i_{p+1}}{m}\right) $,  $j=1,\cdots,p$, $1\leq i_1,\cdots,i_{p+1}\leq m$, we obtain $\Phi\left(X^{N,m}\right) \in L^2_G(\Omega_T)$.

By Theorem 4.1 in \cite{Gao-spa-09},  for any $q\geq 2$,
\begin{equation}\label{A0-condition-proof-eq-2}
 {\mathbb E}^G(|X(x,t)-X(y,s)|^q)\leq C_{q,
T}(|x-y|^q+|s-t|^{q/2}).
\end{equation}
This yields by the Kolmogorov criterion under $G$-expectation (cf. Theorem 1.36, Chapter VI in \cite{peng-book-10})) that
for each $1\leq j\leq p$,
$$
\lim_{m\to\infty}\overline{\mathbb E
}^G\left(\sup_{\sum_{k=1}^{p+1}|x_k-y_k|^2\leq 1/m}\left|\tilde{X}_j(x_1,\cdots,x_p,x_{p+1})-\tilde{X}_j(y_1,\cdots,y_p,y_{p+1})\right|^2\right)=0,
$$
and
$$
\overline{\mathbb E}^G\left(\sup_{(x_1,\cdots,x_p,x_{p+1})\in[0,1]^{p+1}}\left|\tilde{X}_j(x_1,\cdots,x_p,x_{p+1})\right|^2\right)<\infty.
$$
Therefore,
$$
\lim_{m\to\infty}\overline{\mathbb E
}^G\left(\sup_{x\in[-N,N]^p,t\in[0,T]}\left| {X}(x,t)- {X}^{N,m}(x,t)\right|^2\right)=0,
$$
and by
\begin{align*}
\left|\Phi(X)-\Phi(X^{N,m})\right| &  \leq l \sup
_{x\in[-N,N]^p,t\in
[0,T]}|X(x,t)-X^{N,m}(x,t)|+\frac{l}{2^{N-1}},
\end{align*}
we obtain
$$
\lim_{N\to\infty}\lim_{m\to\infty}\overline{\mathbb E
}^G\left(\left|\Phi(X)-\Phi(X^{N,m})\right| ^2\right)=0,
$$
which implies that $\Phi (X)\in L^1_G(\Omega_T)$, and so
$\Phi (X)$ is quasi-continuous.

For general $\Phi\in C_b(C(\mathbb R^p\times [0, T], \mathbb R^p))$,
set $M=\sup_{\psi\in C(\mathbb R^p\times [0, T], \mathbb R^p)}
|\Phi(\psi)|$. For any $N\geq 1$,   set
$$
\Phi^{(N)}(\psi)=\inf_{\varphi\in C(\mathbb R^p\times [0, T],
\mathbb R^p)}\{\Phi(\varphi)+N\|\psi-\varphi\|_N\},~~\psi\in
C(\mathbb R^p\times [0,T], \mathbb R^p).
$$
Then (cf. Lemma 3.1, Chapter VI in \cite{peng-book-10}),
$|\Phi^{(N)}|\leq M$,
$$
|\Phi^{(N)}(\psi)-\Phi^{(N)}(\varphi)|\leq
N\|\psi-\varphi\|_N,~~\psi,\varphi\in C(\mathbb R^p\times [0,T],
\mathbb R^p),
$$
and for any $\psi\in C(\mathbb R^p\times [0,T], \mathbb R^p)$,
$\Phi^{(N)}(\psi)\uparrow \Phi(\psi)$ as $N\to\infty$. Therefore,
$\Phi^{(N)}(X)$ is quasi-continuous for all $N\geq 1$. For any
$\delta>0$, choose a compact subset $K\subset \Omega_T$ such that
$c^G(K^c)<\delta$ and for all $N\geq 1$, $\Phi^{(N)}(X)$ is
continuous on $K$. By Dini's Theorem, $\Phi^{(N)}(X)$ converges
uniformly to $\Phi(X)$ on $K$, and so $\Phi(X)$ is continuous on
$K$. Thus, $\Phi (X)$ is quasi-continuous.

\end{proof}

\begin{lem}\label{A1-A2-condition-proof}
 Assume that $(H1) $ and $(H2)$ hold.

(1).   For each $N\geq 1$,  if $f_{n},n\geq 1$,  $f\in
\mathbb H$, $g_n\in \mathbb A$  and $g\in \mathbb A$ satisfy  that $\|f_n\|_H\leq N$, $\|f\|_H\leq N$,
  $\|f_{n}-f\|_H\to 0$ and $\|g_n-g\|_{G}\to 0$, then
$$
\Psi\left( f_{n},g_n\right) \rightarrow \Psi\left(f,g\right).
$$

(2). For $\Phi\in C_b(C(\mathbb R^p\times [0,T], \mathbb R^p))$,
for each $N\geq 1$,

\begin{equation}\label{A1-A2-condition-proof-eq-1}
\begin{aligned}
 \lim_{\epsilon\to
0}\sup_{\eta\in(M_{G}^{2}(0,T))^d\cap \mathcal B_b(\Omega_T)\atop\int_0^T\mathbb
|\eta_s|^2ds\leq N}  \mathbb{E}^G\bigg(\bigg|&\Phi \circ
\Psi^\epsilon\left(\sqrt{\epsilon}B_\cdot+\int_0^\cdot \eta_sd\langle B
\rangle_s,\langle B
\rangle\right)\\
&-  \Phi \circ \Psi \left(\int_0^\cdot \eta_sd\langle B
\rangle_s,\langle B
\rangle\right)  \bigg|\bigg) = 0.
\end{aligned}
\end{equation}

\end{lem}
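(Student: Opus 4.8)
The plan is to derive both parts from Gronwall-type estimates adapted to the $G$-framework, with the pathwise (q.s.)\ bound $\int_0^T|\eta_s|^2\,ds\le N$ supplying all the uniformity in $\eta$.

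\emph{Part (1).} Write $Y_n=\Psi(f_n,g_n)$ and $Y=\Psi(f,g)$. First I would note that $\|f_n\|_H\le N$ together with $g_n'(s)\in\Sigma$, hence $\|g_n'(s)\|_{HS}\le\sqrt d\,\bar\sigma$, makes the right-hand side of (\ref{Psi-def}) uniformly Lipschitz with an $L^1$-in-time coefficient, so a Gronwall estimate yields an a priori bound $\sup_n\sup_{|x|\le R,\,t\le T}|Y_n(x,t)|\le C(R,N)$ for every $R>0$, and the same for $Y$. Subtracting the two copies of (\ref{Psi-def}) and splitting $\sigma(Y_n)f_n'-\sigma(Y)f'=(\sigma(Y_n)-\sigma(Y))f_n'+\sigma(Y)(f_n'-f')$ and $h(Y_n)\,dg_n-h(Y)\,dg=(h(Y_n)-h(Y))g_n'\,ds+h(Y)(g_n'-g')\,ds$, the ``state'' pieces are absorbed by $(H1)$ into $\int_0^t|Y_n-Y|\,(L+L|f_n'|+L\sqrt d\,\bar\sigma)\,ds$, while the ``control'' pieces are bounded on $\{|x|\le R\}$ by $C(R,N)\|f_n-f\|_H$ and $C(R,N)\|g_n-g\|_G$ respectively (using boundedness of $\sigma(Y(x,\cdot))$ and $h(Y(x,\cdot))$ from the a priori bound). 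Gronwall with the $n$-independent integrating factor $\exp(LT+L\sqrt T\,N+L\sqrt d\,\bar\sigma T)$ then gives $\sup_{|x|\le R,\,t\le T}|Y_n(x,t)-Y(x,t)|\to0$ for every $R$, hence $\rho(Y_n,Y)\to0$.

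\emph{Part (2).} Set $Y^\epsilon_\eta:=\Psi^\epsilon(\sqrt\epsilon B_\cdot+\int_0^\cdot\eta_s\,d\langle B\rangle_s,\langle B\rangle)$ and $Y^0_\eta:=\Psi(\int_0^\cdot\eta_s\,d\langle B\rangle_s,\langle B\rangle)$; by the construction of $\Psi^\epsilon$ in (\ref{SDE-G-pertubation})--(\ref{Psi-epsilon-def}) and of $\Psi$ in (\ref{Psi-def}), these solve
\[
Y^\epsilon_\eta(x,t)=x+\int_0^t b^\epsilon(Y^\epsilon_\eta)\,ds+\sqrt\epsilon\int_0^t\sigma^\epsilon(Y^\epsilon_\eta)\,dB_s+\int_0^t\bigl(\sigma^\epsilon(Y^\epsilon_\eta)\eta_s+h^\epsilon(Y^\epsilon_\eta)\bigr)\,d\langle B\rangle_s
\]
and the same equation with $\epsilon=0$ and without the $dB_s$ term. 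With $D^\epsilon:=Y^\epsilon_\eta-Y^0_\eta$ and $\delta_\epsilon:=\sup_x(|b^\epsilon-b|+\|\sigma^\epsilon-\sigma\|_{HS}+\max_k\|h^{\epsilon,k}-h^k\|_{HS})\to0$ (by $(H2)$), using $(H1)$, the q.s.\ bound $\underline\sigma(t-s)I_{d\times d}\le\langle B\rangle_t-\langle B\rangle_s\le\bar\sigma(t-s)I_{d\times d}$ (a consequence of (\ref{basic-condition-2}) and the representation of $G$-expectation) to dominate the $d\langle B\rangle$-integrals by $\bar\sigma$-scaled Lebesgue integrals, and $\int_0^T|\eta_s|\,ds\le\sqrt{TN}$, I obtain
\[
\sup_{s\le t}|D^\epsilon(x,s)|^2\le C_N\Bigl(\epsilon\sup_{s\le t}\bigl|\int_0^s\sigma^\epsilon(Y^\epsilon_\eta(x,u))\,dB_u\bigr|^2+\delta_\epsilon^2+\int_0^t\sup_{u\le s}|D^\epsilon(x,u)|^2(1+|\eta_s|^2)\,ds\Bigr),
\]
and Gronwall with factor $\exp(C_N\int_0^T(1+|\eta_s|^2)\,ds)\le\exp(C_N(T+N))$ removes the last term uniformly in $\eta$. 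Taking $\mathbb E^G$, the $G$-Burkholder--Davis--Gundy/Doob maximal inequality (cf.\ \cite{Gao-spa-09},\cite{peng-book-10}), together with a uniform-in-$(\epsilon,\eta)$ moment and modulus-of-continuity estimate of type (\ref{A0-condition-proof-eq-2}) for $Y^\epsilon_\eta$ (itself obtained by a preliminary Gronwall argument in which the extra drift $\int_0^\cdot\sigma^\epsilon(Y^\epsilon_\eta)\eta_s\,d\langle B\rangle_s$ is controlled only through $\int_0^T|\eta_s|\,ds\le\sqrt{TN}$), bounds $\epsilon\,\mathbb E^G(\sup_{|x|\le R,\,s\le T}|\int_0^s\sigma^\epsilon(Y^\epsilon_\eta)\,dB_u|^2)$ by $C_{R,N}\epsilon$; since $\delta_\epsilon\to0$ as well, this gives $\lim_{\epsilon\to0}\sup_\eta\mathbb E^G(\sup_{|x|\le R,\,t\le T}|D^\epsilon(x,t)|^2)=0$ for every $R$, hence $\lim_{\epsilon\to0}\sup_\eta\mathbb E^G(\rho(Y^\epsilon_\eta,Y^0_\eta)\wedge1)=0$. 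For Lipschitz $\Phi$ this already yields (\ref{A1-A2-condition-proof-eq-1}); for general $\Phi\in C_b(\mathcal Y)$ I would pass through the Lipschitz inf-convolutions $\Phi^{(k)}(\psi)=\inf_\varphi\{\Phi(\varphi)+k\,\rho(\psi,\varphi)\}$ (with $|\Phi^{(k)}|\le\|\Phi\|$, $\Phi^{(k)}\uparrow\Phi$), noting that by Part (1) and Arzela--Ascoli the image $K:=\Psi(\{\|f\|_H\le\sqrt N\}\times\{g\in\mathbb A:\|g\|\le\bar\sigma T\})$ is compact in $\mathcal Y$ and contains all $Y^0_\eta$, so $\sup_K|\Phi-\Phi^{(k)}|\to0$ by Dini, and the contribution of $\Phi-\Phi^{(k)}$ along $Y^\epsilon_\eta$ is handled via the capacity bound $c^G(\rho(Y^\epsilon_\eta,K)>\delta)\to0$ and $|\Phi-\Phi^{(k)}|\le2\|\Phi\|$.

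The step I expect to be the main obstacle is the noise term in Part (2): establishing $\epsilon\,\mathbb E^G(\sup_{|x|\le R,\,s\le T}|\int_0^s\sigma^\epsilon(Y^\epsilon_\eta)\,dB_u|^2)\to0$ \emph{uniformly in the control $\eta$} forces one first to produce $\epsilon$- and $\eta$-uniform moment and continuity bounds for the perturbed flow $Y^\epsilon_\eta$, whose additional drift is only $L^1$-bounded in $\eta$, and then to invoke the $G$-analogues of the Doob and Burkholder--Davis--Gundy inequalities; a secondary technical point, because $\mathcal Y=C(\mathbb R^p\times[0,T],\mathbb R^p)$ is not locally compact, is the reduction from $\Phi\in C_b$ to Lipschitz $\Phi$, which is why the compactness of the limit family $\{Y^0_\eta\}$ is needed.
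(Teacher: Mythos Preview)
Your proof is correct and Part~(1) is identical to the paper's. For Part~(2) the overall ingredients (uniform-in-$(\epsilon,\eta)$ moment and H\"older estimates for the controlled flow, $G$-BDG, pathwise Gronwall with the $\eta$-independent factor coming from $\int_0^T|\eta_s|^2\le N$) are the same, but the packaging differs. The paper first proves only the \emph{pointwise} estimate $\sup_{|x|\le m}\mathbb E^G\bigl(\sup_{t}|X^{\eta,\epsilon}(x,t)-X^{\eta,0}(x,t)|^q\bigr)\to0$ together with the uniform Kolmogorov bound (\ref{A1-A2-condition-proof-eq-3}), and then invokes a separate Kolmogorov-type weak-convergence lemma (Lemma~\ref{kolmogrov-thm-c}) that upgrades ``tightness $+$ convergence at each $(x,t)$'' to $\mathbb E^G|\Phi(Y^\epsilon_\eta)-\Phi(Y^0_\eta)|\to0$ for \emph{all} $\Phi\in C_b$ in one stroke. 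You instead push the $\sup_{|x|\le R}$ inside the expectation directly---which works because after your pathwise Gronwall only a \emph{uniform bound} (not convergence) is needed on $\mathbb E^G\sup_{x,s}|\int_0^s\sigma^\epsilon(Y^\epsilon_\eta)\,dB|^q$, and that follows from Kolmogorov once the modulus-of-continuity estimate for $Y^\epsilon_\eta$ is in hand---and then treat general $\Phi$ by Lipschitz inf-convolution plus Dini on the (closure of the) limit set $K$. Two small points to tighten: (i) your $K$ is only relatively compact (the $\mathbb H$-ball is not norm-compact), so use its closure---Arzel\`a--Ascoli on the image via the a~priori bounds gives exactly this; (ii) the Kolmogorov step requires $q>p+1$, so run the argument with a high power rather than $q=2$. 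The paper's route is more modular (Lemma~\ref{kolmogrov-thm-c} is reusable and sidesteps the compactness of $K$), while yours is more self-contained and avoids stating an auxiliary lemma.
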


\begin{proof} (1). For any $m\geq 1$, set
$$
M_m=\sup_{|x|\leq m,t\in [0,T]}(\|\sigma(\Psi(f,g)(x,s))\|_{HS}+\max_{1\leq k\leq p}\|h^k(\Psi(f,g)(x,s))\|_{HS}\|).
$$
Then, there exists a constant $M\in(0,\infty)$ such that, on $\{ |x|\leq m,t\in [0,T]\}$,
$$
\begin{aligned}
&|\Psi(f,g)(x,t)-\Psi(f_n,g_n)(x,t)|\\
\leq &M\int_{0}^{t} |\Psi(f,g)(x,s)-\Psi(f_n,g)(x,s)|(1+|f'_n(s)|+\|g_n'(s)\|_{HS})ds\\
&
+ M_m\int_{0}^{t}(|f_n'(s)-f'(s)|+\|g_n'(s)-g'(s)\|_{HS})ds.
\end{aligned}
$$
By Gronwall's inequality,
$$
\begin{aligned}
&\sup_{|x|\leq m,t\in [0,T]}|\Psi(f,g)(x,t)-\Psi(f_n,g)(x,t)|\\
\leq & M_m\int_{0}^{T}(|f_n'(s)-f'(s)|+\|g_n'(s)-g'(s)\|_{HS})d se^{M\int_0^T(1+|f'_n(t)|+\|g_n'(t)\|_{HS})dt}\\
\leq & M_m\left(\sqrt{T}\|f_n-f\|_H+\|g_n-g\|_{G}\right) e^{M (T+\sqrt{NT}+p\bar{\sigma}T)}\to 0 \mbox{ as } n\to\infty.
\end{aligned}
$$

(2).
For any $ \eta\in(M_{G}^{2}(0,T))^d$ with
$\int_0^T\mathbb |\eta_s|^2ds\leq r$,  set $X^{\eta,\epsilon}=\Psi^{\epsilon }(\sqrt{\epsilon }B_\cdot
+\int_{0}^{\cdot }\eta_s d\langle B\rangle_s,\langle B\rangle)$. Then
\begin{equation}\label{A1-A2-condition-proof-eq-2}
\begin{aligned}
X^{\eta,\epsilon}(x,t)=&x+\int_{0}^{t}b^{\epsilon}(X^{\eta,\epsilon}(x,s))ds
+\sqrt{\epsilon}\int_{0}^{t}\sigma^{\epsilon}(X^{\eta,\epsilon}(x,s))dB_{s}\\
&+\int_{0}^{t}\sigma^{\epsilon}(X^{\eta,\epsilon}(x,s))\eta_s d\langle B\rangle_s+\int_{0}^{t}h^{\epsilon}(X^{\eta,\epsilon}(x,s))  d\langle B\rangle_s.
\end{aligned}
\end{equation}
and there exists a constant $M=M(\bar{\sigma})$ such that
$$
\left|\int_{0}^{t}\sigma^{\epsilon}(X^{\eta,\epsilon}(x,s))\eta_s d\langle
B\rangle_s\right| \leq
\left(\int_0^t\left|\sigma^{\epsilon}(X^{\eta,\epsilon}(x,s))\right|^2ds\right)^{1/2}
M r^{1/2}.
$$
By the BDG inequality under $G$-expectation  and Gronwall's
equality, we can get  that (cf. \cite{Gao-spa-09}) for $q\geq 2$, for
any $m\geq 1$, there exists a constant    $\beta=\beta(m,q,r,\bar{\sigma})$ such
that
$$
\sup_{\int_0^T\mathbb |\eta_s|^2ds\leq r}\sup_{\epsilon\in
[0,1]}\sup_{|x|\leq m}\mathbb E^G\left(\sup_{t\in [0,T]}
\left|X^{\eta,\epsilon}(x,t)\right|^q\right)\leq \beta
$$
and for any $x,y\in\mathbb R^q$, for any $s,t\in[0,T]$,
\begin{equation}\label{A1-A2-condition-proof-eq-3}
\sup_{\int_0^T\mathbb |\eta_s|^2ds\leq r}\sup_{\epsilon\in
[0,1]}\mathbb
E^G\left(\left|X^{\eta,\epsilon}(x,t)-X^{\eta,\epsilon}(y,s)\right|^q\right)\leq
\beta (|x-y|^q+|s-t|^{q/2}).
\end{equation}
Set
$$
\theta(\epsilon)=\sup_{x\in \mathbb R^p}\max\left\{
|b^{\epsilon}(x)-b(x)|,\|\sigma^{\epsilon}(x)-\sigma(x)\|_{HS},\max_{1\leq k\leq p}\|h^{\epsilon,k}(x)-h^k(x)\|_{HS}
\right\}
$$
and
$Z^{\eta,\epsilon}(x,t)=X^{\eta,\epsilon}(x,t)-X^{\eta,0}(x,t)$. Then
$$
\begin{aligned}
Z^{\eta,\epsilon}
(x,t)=&\sqrt{\epsilon}\int_{0}^{t}\sigma^\epsilon (X^{\eta,\epsilon}(x,s))dB_{s}+\int_{0}^{t}(b^{\epsilon}(X^{\eta,\epsilon}(x,s))-b(X^{\eta,\epsilon}(x,s)))ds\\
&+\int_{0}^{t}(\sigma^{\epsilon}(X^{\eta,\epsilon}(x,s))-\sigma(X^{\eta,\epsilon}(x,s)))\eta_s
d\langle B\rangle_s\\
&+\int_{0}^{t}(h^{\epsilon}(X^{\eta,\epsilon}(x,s))-h(X^{\eta,\epsilon}(x,s)))
d\langle B\rangle_s\\
&\int_{0}^{t}(b (X^{\eta,\epsilon}(x,s))-b(X^{\eta,0}(x,s)))ds\\
&+\int_{0}^{t}(\sigma (X^{\eta,\epsilon}(x,s))-\sigma(X^{\eta,0}(x,s)))\eta_s
d\langle B\rangle_s\\
&+\int_{0}^{t}(h (X^{\eta,\epsilon}(x,s))-h(X^{\eta,0}(x,s)))
d\langle B\rangle_s,
\end{aligned}
$$
and so for any $q\geq 2$, by the BDG inequality under $G$-expectation  and Gronwall's
equality,   there exists a function $ \gamma(\epsilon,\theta(\epsilon),
q,r,\bar{\sigma})$ satisfying $\gamma(\epsilon,\theta(\epsilon), q,r,\bar{\sigma})\to 0$ as $\epsilon\to 0$
such that  (cf. \cite{Gao-spa-09})
$$
\sup_{\int_0^T\mathbb |\eta_s|^2ds\leq r}\sup_{|x|\leq m}\overline{\mathbb
E}^G\left(\sup_{t\in[0,T]}\left|Z^{\eta,\epsilon}(x,t)\right|^q\right)
\leq \gamma(\epsilon,\theta(\epsilon), q,r,\bar{\sigma}),
$$
which yields that
\begin{equation}\label{A1-A2-condition-proof-eq-4}
\lim_{\epsilon\to 0}\sup_{\int_0^T\mathbb |\eta_s|^2ds\leq
r}\sup_{|x|\leq m}\bar{\mathbb E}^G\left(\sup_{t\in[0,T]}
\left|Z^{\eta,\epsilon}(x,t)\right|^q\right)=0.
\end{equation}
Finally, by the below Lemma \ref{kolmogrov-thm-c},
(\ref{A1-A2-condition-proof-eq-1}) is a consequence of
(\ref{A1-A2-condition-proof-eq-3}) and
(\ref{A1-A2-condition-proof-eq-4}).
\end{proof}

For given  $N\geq 1$,  for each $f\in\mathbb H$, $g\in\mathbb A$,   let $ \Psi^{(N)}(f,g)\in C(\mathbb
R^p\times [0,T], \mathbb R^p)$ be defined by
\begin{align*}
&\Psi^{(N)}(f,g)(x,t)\\
=&x+\sum_{k=1}^{N}b\left(\Psi^{(N)}(f,g)\left(x,\frac{(k-1)T}{N}\right)\right)\left(\frac{kT}{N
}\wedge t-\frac{(k-1)T}{N }\wedge t\right)\\
&+\sum_{k=1}^{N}\sigma
\left(\Psi^{(N)}(f,g)\left(x,\frac{(k-1)T}{N}\right)\right)  \left(f\left({\frac{kT}{N}\wedge
t}\right)-f\left({\frac{(k-1)T}{N}\wedge t}\right)\right)\\
&+\sum_{k=1}^{N}h \left(\Psi^{(N)}(f,g)\left(x,\frac{(k-1)T}{N}\right)\right)\left(g\left({\frac{kT}{N}\wedge
t}\right)-g\left({\frac{(k-1)T}{N}\wedge t}\right)\right).
\end{align*}
Then it is obvious that for any  $N\geq 1 $,
$(\mathbb H\times\mathbb A,\rho_{HG})\ni(f,g)\to \Psi^{(N)}(g)$ is continuous and
\begin{align*}
\Psi^{(N)}(f,g)(x,t)
=&x+\int_{0}^{t}b\left(\Psi^{(N)}(f,g)(x,\pi_{N}(s))\right)ds\\
&+\int_{0}^{t}\sigma\left(\Psi^{(N)}(f,g)(x,\pi_{N}(s))\right)df(s)\\
&+\int_{0}^{t}h\left(\Psi^{(N)}(f,g)(x,\pi_{N}(s))\right)dg(s)
\end{align*}
where $\pi_{N}(s)=\frac{(k-1)T}{N}$, for $s\in [(k-1)T/N,kT/N)$, $k=1,\cdots,N$.

\begin{lem}\label{A3-condition-proof}Assume that $(H1) $   and $(H2)$ hold.  Then for any $l\in(0,\infty)$,
$$
\lim_{N\to\infty} \sup_{\|f\|_{H}\leq l,g\in \mathbb A}\rho(\Psi(f,g),\Psi^{(N)}(f,g))=0.
$$

\end{lem}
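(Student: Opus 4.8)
The plan is to prove that $\Psi^{(N)}(f,g)\to\Psi(f,g)$ uniformly on each slab $[-m,m]^{p}\times[0,T]$ with a rate that is uniform over $\|f\|_{H}\le l$ and $g\in\mathbb{A}$, and then to conclude by summing over $m$ in the series defining $\rho$. To start, I would record the uniform ``driving'' bounds valid on these sets. Since $g'(s)\in\Sigma\subset\{\sigma:\underline{\sigma}I_{d\times d}\le\sigma\le\bar{\sigma}I_{d\times d}\}$, every eigenvalue of $g'(s)$ lies in $[\underline{\sigma},\bar{\sigma}]$, so $\|g'(s)\|_{HS}\le\sqrt{d}\,\bar{\sigma}$ for a.e.\ $s$; hence $\|g\|_{G}\le\sqrt{d}\,\bar{\sigma}T$ and $\|g(s)-g(\pi_{N}(s))\|_{HS}\le\sqrt{d}\,\bar{\sigma}(s-\pi_{N}(s))\le\sqrt{d}\,\bar{\sigma}\,T/N$. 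By Cauchy--Schwarz, $\int_{0}^{T}|f'(s)|\,ds\le\sqrt{T}\,\|f\|_{H}\le\sqrt{T}\,l$, and within one grid interval $|f(s)-f(\pi_{N}(s))|\le(s-\pi_{N}(s))^{1/2}\|f\|_{H}\le(T/N)^{1/2}l$. Writing $(H1)$ in linear-growth form ($|b(y)|\le|b(0)|+L|y|$, $\|\sigma(y)\|_{HS}\le\|\sigma(0)\|_{HS}+L|y|$, $\|h^{k}(y)\|_{HS}\le\|h^{k}(0)\|_{HS}+L|y|$), a standard Gronwall argument applied to $\sup_{|x|\le m,\,r\le t}|\Psi(f,g)(x,r)|$ and to $\sup_{|x|\le m,\,r\le t}|\Psi^{(N)}(f,g)(x,r)|$ (for the latter using $\pi_{N}(s)\le s$) produces a constant $C_{m}$, depending only on $m,l,L,T,d,p,\bar{\sigma}$ and the values of $b,\sigma,h$ at the origin, with
\[
\sup_{|x|\le m,\,t\in[0,T]}\big(|\Psi(f,g)(x,t)|+|\Psi^{(N)}(f,g)(x,t)|\big)\le C_{m}\qquad\text{for all }N\ge1,\ \|f\|_{H}\le l,\ g\in\mathbb{A}.
\]

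Next I would control the oscillation of the scheme inside a grid interval. Since $\pi_{N}(u)=\pi_{N}(s)$ whenever $u$ lies between $\pi_{N}(s)$ and $s$, the integral representation of $\Psi^{(N)}$ recalled just before the lemma gives, for $|x|\le m$,
\[
\big|\Psi^{(N)}(f,g)(x,s)-\Psi^{(N)}(f,g)(x,\pi_{N}(s))\big|\le C_{m}'\Big(\frac{T}{N}+\Big(\frac{T}{N}\Big)^{1/2}l+\sqrt{d}\,\bar{\sigma}\,\frac{T}{N}\Big)=:\delta_{N}(m),
\]
where $C_{m}'$ depends only on $C_{m}$, $L$, $p$ and the values of $b,\sigma,h$ at the origin (use $(H1)$ together with the slab bound of the first paragraph); clearly $\delta_{N}(m)\to0$ as $N\to\infty$, uniformly in $\|f\|_{H}\le l$ and $g\in\mathbb{A}$.

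Now I would subtract the integral equations for $\Psi$ and $\Psi^{(N)}$, apply $(H1)$, and split off the oscillation via $|\Psi(f,g)(x,s)-\Psi^{(N)}(f,g)(x,\pi_{N}(s))|\le G_{N}(s)+\delta_{N}(m)$, where $G_{N}(s):=\sup_{|x|\le m,\,r\le s}|\Psi(f,g)(x,r)-\Psi^{(N)}(f,g)(x,r)|$, to obtain
\[
G_{N}(t)\le L\int_{0}^{t}\big(G_{N}(s)+\delta_{N}(m)\big)\big(1+|f'(s)|+\|g'(s)\|_{HS}\big)\,ds.
\]
With $A:=T+\sqrt{T}\,l+\sqrt{d}\,\bar{\sigma}T$, which bounds $\int_{0}^{T}(1+|f'(s)|+\|g'(s)\|_{HS})\,ds$, Gronwall's inequality applied to the finite measure $L(1+|f'(s)|+\|g'(s)\|_{HS})\,ds$ yields $G_{N}(T)\le LA\,\delta_{N}(m)\,e^{LA}$, so $\sup_{\|f\|_{H}\le l,\,g\in\mathbb{A}}\|\Psi(f,g)-\Psi^{(N)}(f,g)\|_{m}\to0$ for every fixed $m$. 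Finally, given $\varepsilon>0$, I would choose $M$ with $\sum_{m>M}2^{-m}<\varepsilon/2$ and then $N_{0}$ so large that $\sum_{m\le M}2^{-m}\sup_{\|f\|_{H}\le l,\,g\in\mathbb{A}}\|\Psi(f,g)-\Psi^{(N)}(f,g)\|_{m}<\varepsilon/2$ for all $N\ge N_{0}$; then $\sup_{\|f\|_{H}\le l,\,g\in\mathbb{A}}\rho(\Psi(f,g),\Psi^{(N)}(f,g))<\varepsilon$ for $N\ge N_{0}$, which is the assertion.

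The only point needing care is that $b,\sigma,h$ are merely Lipschitz and not bounded, so every estimate must be localized to a slab $[-m,m]^{p}$ and one must verify that $C_{m}$, $C_{m}'$ and the Gronwall factor $e^{LA}$ are genuinely independent of $N$, $f$ and $g$. The one non-routine ingredient is the $N^{-1/2}$ rate for the oscillation of $f$ on a grid interval, which is exactly where the $L^{2}$ control $\|f\|_{H}\le l$ --- rather than a mere total-variation bound --- is used. No property of $\mathbb{E}^{G}$ enters here: this is a deterministic ODE comparison, uniform in the parameters $(f,g)$.
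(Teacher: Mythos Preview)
Your proof is correct and follows essentially the same route as the paper: a uniform a priori bound on the slab $|x|\le m$, an $N^{-1/2}$ oscillation estimate over a grid interval (using the $L^{2}$ control of $f'$ and the $\Sigma$--bound on $g'$), and then Gronwall against the measure $(1+|f'(s)|+\|g'(s)\|_{HS})\,ds$. The only cosmetic difference is that the paper bounds the oscillation of the exact solution $\Psi(f,g)$ between $\pi_{N}(t)$ and $t$, splitting $b(\Psi^{(N)}(\pi_{N}(s)))-b(\Psi(s))$ through the intermediate point $\Psi(\pi_{N}(s))$, whereas you bound the oscillation of the discrete scheme $\Psi^{(N)}(f,g)$ and split through $\Psi^{(N)}(s)$; this forces you to prove the a priori bound $C_{m}$ for $\Psi^{(N)}$ as well, which you correctly do via $\pi_{N}(s)\le s$.
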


\begin{proof} Firstly, by the Lipschitz condition, there exists a constant $L_1\in(0,\infty)$ such that  for any $x\in\mathbb R^p$, $t\in [0,T]$,  $f\in\mathbb H$,$g\in\mathbb A$,
$$
\begin{aligned}
|\Psi(f,g)(x,t)|
\leq &|x|+L_1\int_{0}^{t}\left(1+ |\Psi(f,g)(x,s)|\right)\left(1+|f'(s)|+\|g'(s)\|_{HS}\right)ds.
\end{aligned}
$$
Therefore, by Gronwall's inequality, for any $m\geq 1$,
$$
\begin{aligned}
\bar{M}_m:=\sup_{\|f\|_H\leq l, g\in\mathbb A}\sup_{|x|\leq m,t\in [0,T]}|\Psi(f,g)(x,t)|<\infty.
\end{aligned}
$$
Furthermore,  there exist positive constants $L_2,L_3$ such
that for any $|x|\leq m$, $t\in [0,T]$, $\|f\|_H\leq l$, $g\in\mathbb A$,
\begin{align*}
&\left|\Psi(f,g)(x,\pi_{N}(t)) -\Psi(f,g)(x,t)\right|\\
\leq & L_2\max_{1\leq k\leq N}\max_{t\in [(k-1)T/N,kT/N]}\left(\int_{(k-1)T/N}^t |f'(s)|+\|g'(s)\|_{HS}\right)ds\leq  \frac{L_3}{\sqrt{N}}.
\end{align*}
 Therefore,  by
\begin{align*}
&\left|\Psi^{(N)}(f,g)(x,t)-\Psi(f,g)(x,t)\right|\\
\leq&\int_{0}^{t}\left|b\left(\Psi^{(N)}(f,g)(x,\pi_N(s))\right)
-b\left(\Psi(f,g)(x,\pi_N(s))\right) \right|ds\\
&+\int_{0}^{t}\left|b\left(\Psi(f,g)(x,s)\right)
-b\left(\Psi(f,g)(x,\pi_N(s))\right)\right|ds\\
&+\int_{0}^{t}\left|\left(\sigma\left(\Psi^{(N)}(f,g)(x,\pi_N(s))\right)
-\sigma\left(\Psi(f,g)(x,\pi_N(s))\right)\right)f'(s)\right|ds\\
&+\int_{0}^{t}\left|\left(\sigma\left(\Psi(f,g)(x,s)\right)
-\sigma\left(\Psi(f,g)(x,\pi_N(s))\right)\right)f'(s)\right|ds\\
 &+\int_{0}^{t}\left|\left(h\left(\Psi^{(N)}(f,g)(x,\pi_N(s))\right)
-h\left(\Psi(f,g)(x,\pi_N(s))\right)\right)g'(s)\right|ds\\
&+\int_{0}^{t}\left|\left(h\left(\Psi(f,g)(x,s)\right)
-h\left(\Psi(f,g)(x,\pi_N(s))\right)\right)g'(s)\right|ds,
\end{align*}
there exist positive constants $L_4,L_5$ such that  for any $|x|\leq m$, $t\in [0,T]$, $\|f\|_H\leq l$,  $g\in\mathbb A$,
$$
\begin{aligned}
&\max_{s\in[0,t]}\left|\Psi^{(N)}(f,g)(x,s)-\Psi(f,g)(x,s)\right|\\
\leq &\frac{L_4}{\sqrt{N}}+L_5\int_0^t\max_{u\in[0,s]}
\left|\Psi^{(N)}(f,g)(x,u)-\Psi(f,g)(x,u)\right|\left(1+|f'(s)|+\|g'(s)\|_{HS}\right)ds.
\end{aligned}
$$
Therefore,  by Gronwall lemma, we obtain that for any $m\geq 1$,
$$
\lim_{N\to\infty}\sup_{|x|\leq m,t\in[0,T],\|f\|_H\leq l,g\in\mathbb A}\left|\Psi^{(N)}(f,g)(x,t)-\Psi(f,g)(x,t)\right|=0.
$$

\end{proof}

\begin{lem}\label{kolmogrov-thm-c}   Let $T>0$ and let
$\{Y_{\lambda,\epsilon}=\{Y_{\lambda,\epsilon}(t),t\in[0,T]^m\};\epsilon\in[0,1],\lambda\in
\Lambda\}$ be a family  of $\rr^p$-valued continuous processes such
that $Y_{\lambda,\epsilon}(t)$ is quasi-continuous for all
$\lambda,\varepsilon $ and $t$. Assume that there exists constants
$L\in(0,+\infty)$, $q>0$ and $\kappa>0$ such that
\begin{equation} \label{kolmogrov-thm-c-eq-1}
\sup_{\lambda\in
\Lambda,\epsilon\in[0,1]}\mathbb E^G(|Y_{\lambda,\epsilon}(t)-Y_{\lambda,\epsilon}(s)|^{q})\le
C|t-s|^{m+\kappa},\quad s,t\in[0,T]^m.
\end{equation}
Then
\begin{equation} \label{kolmogrov-thm-c-eq-2}
\sup_{\lambda\in
\Lambda,\epsilon\in[0,1]}\overline{\mathbb{E}}^G\left(\left(
\sup_{s\neq t} \displaystyle \frac{|Y_{\lambda,\epsilon}(t)
-Y_{\lambda,\epsilon}(s)|}{|t-s|^{\alpha}}\right) ^{q}\right)
<\infty,
\end{equation}
for every $\alpha \in[0,\kappa/q)$. As a consequence,
$\{\{Y_{\lambda,\epsilon}(t),t\in[0,T]^m\};\epsilon\in[0,1],\lambda\in
\Lambda\}$ is tight under $\mathbb E^G$, i.e., for any $\delta>0$,
there exists a compact $K_\delta\subset C([0,T]^m,\mathbb R^p)$ such
that
\begin{equation} \label{kolmogrov-thm-c-eq-3}
\sup_{\lambda\in
\Lambda,\epsilon\in[0,1]}{c^G}\left(Y_{\lambda,\epsilon}\in
K_\delta^c\right)<\delta.
\end{equation}

Furthermore,  if for $t\in
[0,T]^m$ and any $\delta>0$,
\begin{equation} \label{kolmogrov-thm-c-eq-5}
\lim_{\epsilon\to 0}\sup_{\lambda\in
\Lambda}c^G\left(|Y_{\lambda,\epsilon}(t)-Y_{\lambda}(t)|\geq
\delta\right)=0,
\end{equation}
where $Y_\lambda(t):=Y_{\lambda,0}(t)$, then  $Y_{\lambda,\epsilon}$ converges  uniformly   to $Y_{\lambda}$
in distribution under $\mathbb E^G$, i.e., for any $\Phi\in
C_b(C([0,T]^m,\mathbb R^p))$,
\begin{equation} \label{kolmogrov-thm-c-eq-6}
\lim_{\epsilon\to 0}\sup_{\lambda\in \Lambda}\mathbb
E^G\left(\left|\Phi(Y_{\lambda,\epsilon})-\Phi(Y_{\lambda})\right|\right)=0.
\end{equation}

 \end{lem}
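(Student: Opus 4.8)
The plan is to prove the three assertions in turn. Throughout I would work with $\overline{\mathbb E}^G=\sup_{P\in\mathcal P}E_P$, which coincides with $\mathbb E^G$ on the quasi-continuous random variables that occur, and use that Chebyshev's inequality, H\"older's inequality and subadditivity all hold for this sublinear functional (each holding for every fixed $P\in\mathcal P$, then passing to the supremum over $P$), together with the Fubini-type bound $\overline{\mathbb E}^G\big(\int\!\int f\,ds\,dt\big)\le\int\!\int\overline{\mathbb E}^G(f)\,ds\,dt$ for nonnegative $f$, which follows from Tonelli for each $P$ followed by $\sup_P$. For the H\"older estimate \eqref{kolmogrov-thm-c-eq-2}: since each path $t\mapsto Y_{\lambda,\epsilon}(t)$ is continuous on $[0,T]^m$, a pathwise application of the Garsia--Rodemich--Rumsey inequality with $\Psi(x)=x^{q}$ and $p(u)=u^{a}$, $a=\alpha+2m/q$ (we may assume $\alpha>0$, the case $\alpha=0$ following a fortiori), gives a deterministic constant $C_\alpha$ with
\[
\Big(\sup_{s\ne t}\frac{|Y_{\lambda,\epsilon}(t)-Y_{\lambda,\epsilon}(s)|}{|t-s|^{\alpha}}\Big)^{q}\le C_\alpha\int_{[0,T]^m}\!\int_{[0,T]^m}\frac{|Y_{\lambda,\epsilon}(t)-Y_{\lambda,\epsilon}(s)|^{q}}{|t-s|^{aq}}\,ds\,dt .
\]
Applying $\overline{\mathbb E}^G$, the Fubini bound and \eqref{kolmogrov-thm-c-eq-1}, the right side is at most $C_\alpha C\int\!\int_{([0,T]^m)^2}|t-s|^{m+\kappa-aq}\,ds\,dt$, which is finite because $m+\kappa-aq>-m$, i.e. $\alpha q<\kappa$; this is \eqref{kolmogrov-thm-c-eq-2}, with a bound depending only on $C,q,\kappa,m,T,\alpha$.

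Writing $[\psi]_\alpha:=\sup_{s\ne t}|\psi(t)-\psi(s)|/|t-s|^{\alpha}$, for $M>0$ the set $K_M=\{\psi\in C([0,T]^m,\mathbb R^p):|\psi(0)|\le M,\ [\psi]_\alpha\le M\}$ is relatively compact by Arzel\`a--Ascoli, and by Chebyshev with \eqref{kolmogrov-thm-c-eq-2} (together with the uniform bound $\sup_{\lambda,\epsilon}\mathbb E^G(|Y_{\lambda,\epsilon}(0)|^{q})<\infty$, which is available in the present applications, where $Y_{\lambda,\epsilon}(0)$ is deterministic),
\[
\sup_{\lambda,\epsilon}c^G(Y_{\lambda,\epsilon}\notin K_M)\le M^{-q}\Big(\sup_{\lambda,\epsilon}\overline{\mathbb E}^G\big(|Y_{\lambda,\epsilon}(0)|^{q}\big)+\sup_{\lambda,\epsilon}\overline{\mathbb E}^G\big([Y_{\lambda,\epsilon}]_{\alpha}^{q}\big)\Big)\ \longrightarrow\ 0
\]
as $M\to\infty$, which gives \eqref{kolmogrov-thm-c-eq-3} with $K_\delta=\overline{K_M}$ for $M$ large.

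For \eqref{kolmogrov-thm-c-eq-6} I would first reduce to $\Phi$ Lipschitz: given $\varepsilon_0>0$ pick a compact $K$ with $\sup_{\lambda,\epsilon}c^G(Y_{\lambda,\epsilon}\notin K)<\varepsilon_0$, and by the inf-convolution $\widetilde\Phi(\psi):=\inf_{\varphi}\{\Phi(\varphi)+N\|\psi-\varphi\|_\infty\}$ (cf. the proof of Lemma \ref{A0-condition-proof}) choose $N$ so large that $\|\widetilde\Phi\|\le\|\Phi\|$ and $\sup_K|\Phi-\widetilde\Phi|<\varepsilon_0$; then $\mathbb E^G(|\Phi(Y_{\lambda,\epsilon})-\widetilde\Phi(Y_{\lambda,\epsilon})|)\le\varepsilon_0+2\|\Phi\|\,c^G(Y_{\lambda,\epsilon}\notin K)\le\varepsilon_0(1+2\|\Phi\|)$ uniformly in $\lambda,\epsilon$, and likewise for $Y_\lambda$, so it suffices to treat $\Phi$ with $|\Phi(\psi)-\Phi(\varphi)|\le L_\Phi\sup_t|\psi(t)-\varphi(t)|$. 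Fix $\delta>0$. By \eqref{kolmogrov-thm-c-eq-2} and Chebyshev choose $M$ so that the event $G_{\lambda,\epsilon}=\{[Y_{\lambda,\epsilon}]_\alpha\le M,\ [Y_\lambda]_\alpha\le M\}$ has $2\|\Phi\|\sup_{\lambda,\epsilon}c^G(G_{\lambda,\epsilon}^{c})<\delta/3$; on $G_{\lambda,\epsilon}$, for any finite $\delta'$-net $\{t_1,\dots,t_k\}$ of $[0,T]^m$ one has $\sup_t|Y_{\lambda,\epsilon}(t)-Y_\lambda(t)|\le 2M(\delta')^{\alpha}+\max_{1\le i\le k}|Y_{\lambda,\epsilon}(t_i)-Y_\lambda(t_i)|$. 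Now fix $\delta'$ with $2L_\Phi M(\delta')^{\alpha}<\delta/3$; since the net is finite, \eqref{kolmogrov-thm-c-eq-5} together with the uniform $L^{q}$-bound from the tightness step and H\"older's inequality give $\sup_\lambda\mathbb E^G\big(\max_i|Y_{\lambda,\epsilon}(t_i)-Y_\lambda(t_i)|\big)\to0$ as $\epsilon\to0$. Splitting over $G_{\lambda,\epsilon}$ and using subadditivity of $\mathbb E^G$ then yields $\sup_\lambda\mathbb E^G(|\Phi(Y_{\lambda,\epsilon})-\Phi(Y_\lambda)|)<\delta$ for $\epsilon$ small, which is \eqref{kolmogrov-thm-c-eq-6}.

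The main obstacle is this last step: because the family is indexed by an arbitrary set $\Lambda$, there is no dominated-convergence shortcut, and the uniform-in-$\lambda$ pointwise control \eqref{kolmogrov-thm-c-eq-5} must be promoted by hand to uniform-in-$\lambda$ control of $\mathbb E^G(|\Phi(Y_{\lambda,\epsilon})-\Phi(Y_\lambda)|)$, via a finite net together with the equicontinuity supplied by \eqref{kolmogrov-thm-c-eq-2}; a secondary point is to keep checking that each inequality used (the Fubini bound, Chebyshev, H\"older, subadditivity) is valid for the sublinear functional $\overline{\mathbb E}^G$ rather than for a genuine expectation, and that the quasi-continuity hypothesis on the $Y_{\lambda,\epsilon}(t)$ makes all the quantities $\Phi(Y_{\lambda,\epsilon})$, etc., legitimate elements of $L^1_G(\Omega_T)$.
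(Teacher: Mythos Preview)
Your proof is correct and follows the same three-step skeleton as the paper (H\"older moment estimate $\Rightarrow$ tightness via Chebyshev $\Rightarrow$ finite-net argument for the convergence), but with different technical implementations at two points.

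For \eqref{kolmogrov-thm-c-eq-2} you use the Garsia--Rodemich--Rumsey inequality, whereas the paper simply invokes the Kolmogorov chaining argument carried out in \cite{peng-book-10}; both are standard and yield the same bound. For \eqref{kolmogrov-thm-c-eq-6} the paper does \emph{not} reduce to Lipschitz $\Phi$: it works directly with an arbitrary $\Phi\in C_b$, exploiting only its uniform continuity on the compact H\"older ball $K_r$, and runs the entire argument at the level of the capacity $c^G$ (never upgrading \eqref{kolmogrov-thm-c-eq-5} to $L^1$-convergence at the net points). Your route---inf-convolution to a Lipschitz $\widetilde\Phi$, then $L^1$-convergence of $\max_i|Y_{\lambda,\epsilon}(t_i)-Y_\lambda(t_i)|$ via the uniform $L^q$ bound and H\"older---is also valid, with the mild caveat that this last H\"older step needs $q>1$, which is harmless in the applications but not literally assumed in the lemma. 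The paper's version is thus marginally cleaner here, since it never leaves the capacity level and uses only the boundedness of $\Phi$.

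You are also more careful than the paper in one respect: you correctly note that tightness requires a uniform bound on $\mathbb E^G(|Y_{\lambda,\epsilon}(0)|^q)$ in addition to \eqref{kolmogrov-thm-c-eq-2}. The paper asserts compactness of the bare H\"older ball $\{[\psi]_\alpha\le r\}$, which is only true once a point value is also controlled; this is implicitly supplied in the applications (where $Y_{\lambda,\epsilon}(0)$ is deterministic), exactly as you observe.
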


\begin{proof}
First, from the proof of the Kolmogorov criterion under $G$-expectation (cf. Theorem 1.36,
Chapter VI in \cite{peng-book-10})), we can obtain (\ref{kolmogrov-thm-c-eq-2}). Since for each $\alpha\in(0,\kappa/q)$,
$$
\left\{y\in C([0,T]^m,\mathbb R^p);\sup_{s\neq t} \displaystyle \frac{|y(t)
-y(s)|}{|t-s|^{\alpha}}\leq r\right\}
$$
is compact subset for any $r\in (0,\infty)$, by Chebyshev's inequality and (\ref{kolmogrov-thm-c-eq-2}),
for any $\delta>0$,
there exists a compact $K_\delta\subset C([0,T]^m,\mathbb R^p)$ such
that  (\ref{kolmogrov-thm-c-eq-3}) holds.

If
for each $t\in [0,T]^m$ and $\delta>0$,
(\ref{kolmogrov-thm-c-eq-5}) holds. Take $\alpha\in(0,\kappa/q)$. For
any $\delta>0$, choose $r=r(\delta)\in (0,\infty)$ such that
$$
\sup_{\lambda\in
\Lambda,\epsilon\in[0,1]}c^G\left(Y_{\lambda,\epsilon}(t)\in K_r^c\right)<\delta,
$$
where $K_r=\left\{y\in C([0,T]^m,\mathbb R^p);\sup_{s\neq t} \displaystyle \frac{|y(t)
-y(s)|}{|t-s|^{\alpha}}\leq r\right\}$.

By continuity of $\Phi$ and compactness of $K_r$, there exists
$\zeta>0$ such that  for any $\psi,\varphi\in C([0,T]^m,\mathbb R^p)\cap K_r$
with  $\|\psi-\varphi\|\leq\zeta$,
$|\Phi(\psi)-\Phi(\varphi)|<\delta$.

By the definition of $K_r$,
there exist  $l\geq 1$, $\tau\in(0, (\zeta/3)^{1/\alpha}/r)$ and
$t_1,\cdots,t_l$ such that  $[0,T]^m= \cup_{i=1}^l U(t_i,\tau)$,
and
$$
\sup_{\lambda\in \Lambda}c^G\left(\max_{1\leq i\leq l}\sup_{t\in
U(t_i,\tau)}|Y_{\lambda}(t)-Y_{\lambda}(t_i)|\geq
\zeta/3,Y_\lambda\in K_r\right)=0,
$$
where $U(t_i,\tau)=\{t\in[0,T]^m;|t-t_i|<\tau \}$ .

By (\ref{kolmogrov-thm-c-eq-5}), there exists $\epsilon_0$ such that
for all $\epsilon\in(0,\epsilon_0)$,
$$
\max_{1\leq i\leq l} \sup_{\lambda\in
\Lambda}c^G\left(|Y_{\lambda,\epsilon}(t_i)-Y_{\lambda}(t_i)|\geq
\zeta/3\right)<\delta/3,
$$

By triangle inequality
$$
|Y_{\lambda,\epsilon}(t)-Y_{\lambda}(t)|\leq
|Y_{\lambda,\epsilon}(t)-Y_{\lambda,\epsilon}(t_i)|+|Y_{\lambda,\epsilon}(t_i)-Y_{\lambda}(t_i)|+|Y_{\lambda}(t)-Y_{\lambda}(t_i)|,
$$
we have that for all $\epsilon\in(0,\epsilon_0)$,
$$
\sup_{\lambda\in \Lambda}c^G\left(Y_{\lambda,\epsilon}\in
K_r,Y_\lambda\in K_r,\sup_{t\in[0,T]^m}|Y_{\lambda,\epsilon}(t)-Y_{\lambda}(t)|\geq
\zeta\right)<\delta.
$$
Therefore, for all $\epsilon\in(0,\epsilon_0)$,
$
\sup_{\lambda\in
\Lambda}\mathbb E^G\left(\left|\Phi(Y_{\lambda,\epsilon})-\Phi(Y_{\lambda})\right|\right)\leq
(1+3M)\delta,
$
where $M:=\sup_{y\in C([0,T]^m,\mathbb R^p)}|\Phi(y)|$.
This yields (\ref{kolmogrov-thm-c-eq-6}).

\end{proof}

%\addtocounter{section}{1}

\end{document}